\date{}
\newtheorem{lemma}{Lemma}[section]
\newtheorem{proposition}[lemma]{Proposition} 
\newtheorem{theorem}[lemma]{Theorem}
\newtheorem{corollary}[lemma]{Corollary} 
\newtheorem{remark}[lemma]{Remark}
\def\konst #1{{\color{blue} #1}}
\begin{document}

\title{{\bf New examples of $K$-monotone\\ weighted Banach couples}}

\author {Sergey V. Astashkin\thanks{Research partially supported by
RFBR grant 10-01-00077, G. S. Magnusons found of the Royal Swedish
Academy of Sciences-project number FOAMagn09-028 and Lule{\aa}
University of Technology.}, Lech Maligranda\thanks{Research partially supported by the Swedish
Research Council (VR) grant 621-2008-5058.}\\
 \,{\small and}
Konstantin E. Tikhomirov$^*$}

\date{}

\maketitle

\renewcommand{\thefootnote}{\fnsymbol{footnote}}

\footnotetext[0]{2010 {\it Mathematics Subject Classification}:
46E30, 46B20, 46B42}
\footnotetext[0]{{\it Key words and phrases}: $K$-monotone couples, w-decomposable Banach lattices,
symmetric spaces, ultrasymmetric spaces, weighted symmetric spaces, Lorentz spaces, Marcinkiewicz 
spaces, Orlicz spaces, regularly varying functions}

\vspace{-7 mm}

\begin{abstract}
\noindent {\footnotesize Some new examples of $K$-monotone couples of
the type $(X, X(w))$, where $X$ is a symmetric space on $[0, 1]$ and
$w$ is a weight on $[0, 1]$, are presented. Based on the property of
the $w$-decomposability of a symmetric space we show that, if a
weight $w$ changes sufficiently  fast, all symmetric spaces $X$ with
non-trivial Boyd indices such that the Banach couple $(X, X(w))$ is
$K$-monotone belong to the class of ultrasymmetric Orlicz spaces.
If, in addition, the fundamental function of $X$ is $t^{1/p}$ for
some $p \in [1, \infty]$, then $X = L_p$. At the same time a Banach
couple $(X, X(w))$ may be $K$-monotone for some non-trivial $w$ in
the case when $X$ is not ultrasymmetric. In each of the cases where $X$
 is a Lorentz, Marcinkiewicz or Orlicz space we have found conditions which
 guarantee that $(X, X(w))$ is $K$-monotone.}
\end{abstract}

%%%%%%%%%%%%%%%%%%%%%%%%%%%%%%%%%%%%%%%%%%%%%%%%%%
%\begin{center}
%{\bf 1. Introduction}
%\end{center}

\begin{section}
{\bf Introduction}
\end{section}

One of the fundamental problems in interpolation theory is to find a description of all interpolation 
spaces between two fixed Banach spaces $X_0$ and  $X_1$, which form a Banach couple ${\bar X} = (X_0, X_1)$, 
i.e., the description of all intermediate Banach spaces $X$ with respect to ${\bar X}$ such that every linear operator 
$T\colon {\bar X} \to {\bar X}$ maps $X$ into $X$ boundedly.

An important role in the interpolation theory is played by the {\it $K$-monotone spaces} between fixed Banach spaces 
$X_0$ and  $X_1$, which are defined as follows: if $x\in X$, $y\in X_0 + X_1$, and the inequality
$$
K(t,y; X_0, X_1) \leq K(t,x; X_0, X_1) ~~{\rm holds ~for ~all} ~ t > 0,
$$
then $y\in X$ and $\|y\|_{X} \leq C \|x\|_{X}$ for some constant $C \geq 1$ independent of $x$ and $y$. Here
$$
K(t,x; X_0, X_1)= \inf \{ \|x_0\|_{X_0} + t\|x_1\|_{X_1}: x =  x_0 +x_1, x_0 \in X_0, x_1 \in X_1\}
$$
is the classical $K$-functional of Peetre.

A couple ${\bar X} = (X_0, X_1)$ is called {\it $K$-monotone} (or {\it Calder\'on-Mityagin couple})  if all interpolation spaces 
between $X_0$ and $X_1$ are $K$-monotone.

By a theorem due to Brudny{\u \i} and Krugljak \cite[Theorem 4.4.5]{BK91} all interpolation spaces with respect to a $K$-monotone 
Banach couple $(X_0, X_1)$ can be represented in the form $X = (X_0, X_1)_{\Phi}^K$, where $\Phi$ is a Banach lattice of measurable 
functions on $(0, \infty)$ and
$$
\| x\|_{(X_0, X_1)_{\Phi}^K} = \| K(\cdot, x; X_0, X_1)\|_{\Phi}.
$$
Moreover, even if $(X_0, X_1)$ is not $K$-monotone, every interpolation space $X$ with respect to $(X_0, X_1)$ which happens 
to be a $K$-monotone space satisfies $X = (X_0, X_1)_{\Phi}^K$ for some suitable $\Phi$, and of course this 
is only up to equivalence of norms ( Brudny{\u \i} and Krugljak \cite[Theorem 3.3.20]{BK91}).
Therefore, the problem of finding new examples of $K$-monotone couples or $K$-monotone spaces becomes very important.

Calder\'on \cite{Ca66} and independently Mitjagin \cite{Mi65} proved that the couple $(L_1, L_{\infty})$ is $K$-monotone. 
Several years later Sedaev and Semenov \cite{SS71} proved that a weighted couple
$(L_1(w_0), L_1(w_1))$ is $K$-monotone (cf. also Cwikel-Kozlov
\cite{CK02} for another proof) and then Sedaev \cite{Se73}
generalized this result to the couples of the form $(L_p(w_0),
L_p(w_1))$ $(1 \leq p \leq \infty)$. Finally, Sparr \cite{Sp74},
\cite{Sp78} showed that $(L_p(w_0),L_q(w_1))$ is a $K$-monotone couple 
for $0 < p, q \leq \infty$. There are other proofs of Sparr's result,
 for example, in papers of Dmitriev \cite{Dm81}, Cwikel \cite{Cw76} and of Arazy-Cwikel \cite{AC84}.

In \cite{CN03}, Cwikel and Nilsson considered the problem of $K$-monotonicity from 
a somewhat different point of view. Namely, they studied the problem when
a weighted Banach couple $(X(w_0), Y(w_1))$, with $X, Y$ being separable
Banach lattices with the Fatou property on a measure space
$(\Omega, \Sigma, \mu)$, is $K$-monotone for all weights $w_0, w_1$
on $\Omega$. They proved that this can happen if and only if $X =
L_p(v_0)$ and $Y = L_q(v_1)$ for some weights $v_0,v_1$ and some
numbers $1 \leq p, q < \infty$. In their proof the concept of \konst{a}
decomposable Banach lattice on a measure space is essentially used.
A Banach lattice $X$ is called {\it decomposable} if for any convergent series
$\sum_{n=1}^\infty f_n$ in $X$ with pairwise disjoint $f_n$ $(n=1,2,\dots)$
and any (formal) series $\sum_{n=1}^\infty g_n, g_n\in X$, $\|g_n\|_X\leq\|f_n\|_X$
$(n=1,2,\dots)$, such that all $g_n$ are pairwise disjoint, we have
$\sum_{n=1}^\infty g_n\in X$ and 
$\left\|\sum_{n=1}^\infty g_n\right\|_X\leq C
\left\|\sum_{n=1}^\infty f_n\right\|_X$ with
a constant $C$ independent of $f_n$, $g_n$.
This notion or some variants of it were introduced earlier by Cwikel
\cite{Cw84} and Cwikel-Nilsson \cite{CN84}.

\smallskip
Note that the problem of $K$-monotonicity of weighted couples
$(X(w_0), Y(w_1))$ can be reduced to considering couples of the
form $(X, Y(w))$. Therefore, in what follows, we will examine
couples with one weight only. We will say that a weight $w$ is non-trivial if 
either $w$ or $1/w$ is unbounded.

In \cite{Ti11}, the concept of $w$-decomposability of a Banach lattice, which 
generalizes in a sense the previous one due to Cwikel, was introduced. 
A theorem proved in \cite{Ti11} states that, whenever $X$ is a Banach lattice with 
the Fatou property, the couple $(X, X(w))$ is $K$-monotone if and only if $X$ is
$w$-decomposable (see Theorem \ref{Theor Tikhomirov} below in Section 3). 
Earlier Kalton \cite{Ka93} showed that in the case of symmetric sequence spaces with 
the Fatou property the $K$-monotonicity of a couple $(X, Y(w))$ for some
non-trivial weight $w$ implies that $X = l_p$ and $Y= l_q$ for some 
$1 \leq p, q \leq \infty$ (note, however, that there exist examples of shift-invariant sequence spaces $X$ with
the Fatou property, such that $(X,X(2^{-k}))$ is $K$-monotone but $X$ is not isomorphic to $l_p$
for any $1\le p\le \infty$ \cite{AT10a}, \cite{AT10b}).
Tikhomirov's theorem from \cite{Ti11} allows us to examine whether
the result of Kalton extends to symmetric function spaces. We will see
that this is not the case and the situation here will be essentially
different.

\smallskip
The paper is organized as follows. After the introduction, in Section 2, some necessary definitions 
and notations are collected. In the first part, we recall necessary information about symmetric 
spaces on $[0, 1]$ and then, in the second part, regularly varying convex Orlicz functions on 
$[0, \infty)$ and regularly varying quasi-concave functions on $[0, 1]$ are discussed.

In Section 3 we consider the notion of a $w$-decomposable Banach
lattice, which plays a central role in these investigations. Using
the Krivine theorem we show that it can be essentially simplified in
the case of symmetric function spaces. Namely, we prove condition
\eqref{decompForSym7} which means that for any $w$-decomposable symmetric 
space $X$ there exists $p \in [1,\infty]$ (depending on $X$) such that $X$ has, roughly
speaking, both "restricted lower and upper p-estimates". In
particular, its fundamental function $\varphi$ satisfies condition (\ref{condOnFundFunc}) 
for some $p$, which means that the function
$\varphi^p$ is "almost additive" near zero.

Section 4 contains results on the $w$-decomposability of Lorentz and
Marcinkiewicz spaces on $[0, 1]$. If $\varphi$ is a concave
increasing function on $[0, 1]$ with $\gamma_{\varphi} > 0$ and $1
\leq p < \infty$, then the couple $(X, X(w))$ with $X = \Lambda_{p,
\varphi}([0, 1])$ and a given non-trivial weight $w$  is
$K$-monotone if and only if condition (10) holds. This couple is
$K$-monotone for some weight $w$ if and only if $\varphi$ is
equivalent to a regularly varying function at 0 of order p.
Moreover, for any weight $w$ on $[0, 1]$ we can construct a concave
function $\varphi$ on $[0, 1]$ such that the couple $(X, X(w))$ with $X
= \Lambda_{1, \varphi}([0, 1])$ is $K$-monotone and $\Lambda_{1,
\varphi}([0, 1]) \neq L_1[0, 1]$. 

We obtain analogous results for Marcinkiewicz spaces, as a consequence 
of a new duality theorem which is of independent interest. It states that under 
suitable mild conditions on a Banach lattice $X$, the weighted couple
$(X, X(w))$ is $K$-monotone if and only if the couple $(X^{\prime}, X^{\prime}(w))$ 
is $K$-monotone, where $X^{\prime}$ means the K\"othe dual to $X$.

Section 5 deals with conditions of $w$-decomposability of Orlicz
spaces $L_F[0, 1]$. It is shown, in Theorem 6, that if an Orlicz
function $F$ satisfies the $\Delta_2$-condition for large arguments,
then $L_F[0, 1]$ is $w$-decomposable if and only if it satisfies some restricted 
$p$-upper and $p$-lower estimates (see condition (\ref{suffcondfororlicz})). 
Moreover, it is proved, in Theorem 7, that if an Orlicz
function $F$ is equivalent to an Orlicz function which is regularly
varying at $\infty$ of order $p\in [1,\infty)$, then the Orlicz
space $L_F = L_F[0, 1]$ is $w$-decomposable for some weight $w$ on
$[0, 1]$ and therefore the couple $(L_F, L_F(w))$ is $K$-monotone.

Finally, in Section 6, we prove that if a symmetric space $X$ on
$[0, 1]$ with non-trivial Boyd indices is $w$-decomposable with
respect to a weight changing sufficiently fast, then $X$ is an
ultrasymmetric Orlicz space. The result implies that, for such a weight $w$, 
every $K$-monotone couple $(X, X(w))$ with $X$ having the Fatou
property must be an ultrasymmetric Orlicz space. Moreover, if
its fundamental function is of the form $\varphi_X(t)=t^{1/p}$ for
some $1\leq p \leq \infty$, then $X=L_p$.
\vspace{3mm}

%%%%%%%%%%%%%%%%%%%%%%%%%%%%%%%%%% Section 2
\begin{section}
{\bf Preliminaries}
\end{section}

Let us collect necessary information and results, in two parts, on
symmetric (rearrangement invariant) spaces and regularly varying
functions.

\medskip
{\bf 2a. Symmetric spaces.} Let $(\Omega, \Sigma, \mu)$ be a complete $\sigma$-finite measure space and 
$L^0=L^0(\Omega)$ be the space of all classes of $\mu$-measurable real-valued functions defined on 
$\Omega$.
A Banach space $X=\left( X,\| \cdot \|_{X}\right) $ is said to be a {\it Banach lattice} on $\Omega$ if $X$ is a linear 
subspace of $L^0(\Omega)$ and satisfies the so-called ideal property, which means that if $y\in X, x \in L^{0}$ 
and $|x(t)| \leq| y(t)|$ for $\mu $-almost all $t \in \Omega$, then $x\in X$ and $\| x\|_{X} \leq \| y\| _{X}$. 
We also assume that the support of the space $X$ is $\Omega$ (supp $X = \Omega$), that is, there is an element 
$x_0 \in X$ such that $x_0(t) > 0 ~ \mu$-a.e. on $\Omega$.

We will say that $X$ has the {\it Fatou property} if $0 \leq x_n \uparrow x \in L^0$ with $x_n \in X$ and 
$\sup_{n \in \mathbb N} \|x_n\|_X < \infty$ imply that $x \in X$ and $\|x_n\|_X \uparrow \|x\|_X$.

\medskip
A Banach lattice $X$ is said to be {\it $p$-convex} ($1 \leq p < \infty$), respectively {\it $q$-concave} ($1 \leq q < \infty$), 
if there is a constant $C > 0$ such that
$$
\| ( \sum_{k=1}^n |x_k|^p)^{1/p} \|_X \leq C ( \sum_{k=1}^n \| x_k\|_X^p )^{1/p},
$$
respectively,
$$
( \sum_{k=1}^n \| x_k\|_X^q )^{1/q} \leq C \| ( \sum_{k=1}^n |x_k|^q )^{1/q} \|_X,
$$
for any choice of vectors $x_1, x_2, \ldots, x_n$ in $X$ and any $n \in \mathbb N$. If in the above definitions 
vectors $x_1, x_2, \ldots, x_n \in X$ are pairwise disjoint, then $X$ is said to satisfy an upper $p$-estimate 
and lower $q$-estimate, respectively. Of course, $p$-convexity implies upper $p$-estimate and $q$-concavity 
implies lower $q$-estimate of a Banach lattice $X$. More properties can be found in the book \cite{LT79}.

\medskip
Let $w$ be a weight on $(\Omega,\Sigma,\mu)$, i.e., positive finite
a.e. function, and let $X$ be a Banach lattice on
$(\Omega,\Sigma,\mu)$. Then the weighted space $X(w)$ on
$(\Omega,\Sigma,\mu)$ is defined by $X(w) = \{x \in \Omega: x w \in
X\}$ with the norm $\|x\|_{X(w)} = \| x w\|_X$. In what follows, we
will always suppose that the weight $w$ is {\it non-trivial}, that is, $w$
or $1/w$ is an unbounded function on $(\Omega,\Sigma,\mu).$

For two Banach spaces $E$ and $F$ the symbol $E \stackrel {C} \hookrightarrow F$
means that the embedding $E \subset F$ is continuous with the norm
which is not bigger than $C$, i.e., $\|x\|_{F} \leq C \|x\|_{E}$ for
all $x \in E$.

\medskip
By a {\it symmetric space} (symmetric Banach function space), we
mean a Banach lattice $X = (X, \| \cdot\|_X)$ on $I = [0, 1]$ with
the Lebesgue measure $m$ satisfying the following additional property: for any
two equimeasurable functions $ x, y \in L^0(I)$ (that is, they have
the same distribution functions $d_x(\lambda) = d_y(\lambda)$, where
$d_x(\lambda) = m(\{t \in I: |x(t)| > \lambda \}), \lambda \geq 0$)
the condition $x \in X$ implies that $y \in X$ and $\| x \|_X = \| y
\|_X$. In particular, $\| x\|_X = \| x^\ast\|_X$, where $x^\ast
(t)={\rm inf}\{\lambda > 0\colon\ d_x(\lambda)\leq t\},\ t \geq 0$.

Recall that a non-negative function $\varphi: [0, 1] \rightarrow [0, \infty)$ is called {\it quasi-concave} if it is 
non-decreasing on $[0, 1]$ with $\varphi (0) = 0$ and if $\frac{\varphi(t)}{t}$ is non-increasing on $(0, 1]$.
The {\it fundamental function} $\varphi_X$ of a symmetric space $X$ on $I$ is defined by the formula 
$\varphi_X(t) = \| \chi_{[0, \,t]} \|_X, t \in I $. It is well known that every fundamental function is quasi-concave 
on $I$. Taking ${\tilde \varphi_X}(t): = \inf_{s \in (0, 1)} (1 + \frac{t}{s}) \varphi_X(s)$ we obtain 
a concave function ${\tilde \varphi_X}$ satisfying
$\varphi_X(t) \leq {\tilde \varphi_X}(t) \leq 2 \varphi_X(t)$ for all $t \in I$. 
For any quasi-concave function $\varphi$ on $I$ the {\it Marcinkiewicz space} $M_{\varphi}$ is defined by the norm
$$
\| x \|_{M_{\varphi}} = \sup_{t \in I, t > 0} \varphi(t) x^{**}(t), ~~ x^{**}(t) = \frac{1}{t} \int_0^t x^*(s) ds.
$$
This is a symmetric space on $I$ with the fundamental function $\varphi_{M_{\varphi}}(t) = \varphi(t)$ and 
$X \stackrel {1} \hookrightarrow M_{\varphi_X}$. The fundamental function of a symmetric space $X = (X, \| \cdot\|_X)$ 
is not necessarily concave but we can introduce an equivalent norm on $X$ in such a way that the fundamental 
function will be concave (take $ \| x \|_X^1 = \max (\| x \|_X, \| x \|_{M_{{\tilde \varphi_X}}}), ~ x \in X$).

For any symmetric function space $X$ with \konst{a} concave fundamental function $\varphi = \varphi_X$ there is also 
the smallest symmetric space with the same fundamental function. This space is the {\it Lorentz space} given by the norm
$$
\| x \|_{\Lambda_{\varphi}} = \int_I x^*(t) d \varphi(t): = \varphi(0^+) \| x \|_{L_{\infty}(I)} + \int_I x^*(t) \varphi^{\prime}(t) dt.
$$
We have then embeddings $\Lambda_{\varphi_X}  \stackrel {1} \hookrightarrow X  \stackrel {1} \hookrightarrow M_{\varphi_X}$. 
A non-trivial symmetric function space $X$ on $I = [0, 1]$ is an intermediate space between the spaces $L_1(I)$ and $L_{\infty}(I)$ 
and $L_{\infty}(I)  \stackrel {C_1} \hookrightarrow X  \stackrel {C_2} \hookrightarrow L_1(I)$,
where $C_1 = \varphi_X(1), C_2 = 1/\varphi_X(1)$ (see \cite{BS88}, Corollary 6.7 on page 78 or Theorem 4.1 on page 91 
of \cite{KPS82} for a similar result when the underlying measure space is $(0, \infty)$.)

\medskip
The lower and upper {\it Boyd indices} $\alpha_X$ resp. $\beta_X$ and the {\it dilation indices} $\gamma_X$ resp. $\delta_X$ 
of a symmetric space $X$ on $I = [0, 1]$ with the fundamental function $\varphi_X = \varphi$ are defined as follows:
$$
\alpha_X: = \lim_{t \rightarrow 0^+} \frac{\ln \| \sigma_t\|_{X\rightarrow X}}{\ln t}, ~ \beta_X: = 
\lim_{t \rightarrow \infty} \frac{\ln \| \sigma_t\|_{X\rightarrow X}}{\ln t}, ~ \sigma_t x(s) = x(s/t) \chi_I(s/t)
$$
and
$$
\gamma_X: = \gamma_{\varphi} = \lim_{t \rightarrow 0^+} \frac{\ln {\bar \varphi} (t)}{\ln t},  ~ \delta_X: = \delta_{\varphi} 
= \lim_{t \rightarrow \infty} \frac{\ln {\bar \varphi}(t)}{\ln t}, ~ {\bar \varphi} (t) = \sup_{s, st \in I} \frac{\varphi(st)}{\varphi(s)}.
$$
We have the relations $0 \leq \alpha_X \leq \gamma_X \leq \delta_X  \leq \beta_X \leq 1$ (see \cite{KPS82}, pp. 101-102 
and \cite{Ma85}, p. 28).

%%%%%%%%%%%%%%%%
\medskip
A function $F: [0, \infty) \rightarrow [0, \infty)$ is called an {\it Orlicz function} if it is convex and increasing with $F(0) = 0$. 
For a given Orlicz function F the {\it Orlicz space} $L_F = L_F(I)$ on $I = [0, 1]$ is defined as
\vspace{-5mm}
\begin{equation*}
L_F(I) = \{x \in L^0(I): I_F(cx)<\infty ~{ \rm for ~some } ~ c = c(x) > 0\},
\end{equation*}
where $I_F(x): = \int_I F(|x(t)|) dt$. The Orlicz space $L_F$ is a symmetric space on $I$ with the so-called {\it Luxemburg-Nakano 
norm} defined by
\begin{equation*}
\| x\| _{L_F}=\inf \left\{ \lambda >0: I_{F} ( x/\lambda ) \leq 1\right\}.
\end{equation*}

An Orlicz function $F$ satisfies the {\it $\Delta_2$-condition for large $u$} if there exist constants $C \geq 1, u_0 \geq 0$ such 
that $F(2 u) \leq C F(u)$ for all $u \geq u_0$.

The following notation will be used throughout the text: $f
\stackrel{C}{\approx} g$ means that the functions $f$ and $g$ are
equivalent with the constant $C > 0$, that is, $C^{-1} f(t) \leq g(t) \leq Cf(t)$
for all points $t$ of the whole set on which these functions are defined, or 
at all points of some explicitly designated subset of that set. In the case when 
the constant of equivalence is not important we will write just $f
\approx g$. By $[r]$ we will denote the integer part of a real
number $r.$

More information about Banach lattices and symmetric spaces can be
found, for example, in \cite{BS88}, \cite{KPS82} and \cite{LT79};
about Orlicz spaces one can read e.g. in \cite{KR61} and \cite{Ma89}.

%%%%%%%%%%%%%%%%%%%%%%%%%%%%%%
\bigskip
{\bf 2b. Regularly varying convex and concave functions.}  An Orlicz function $F$ on $[0,\infty)$ is called {\it regularly varying 
at $\infty$ of order $p$} ($1\leq p<\infty)$ if
\begin{equation} \label{def1}
\lim_{t\to\infty}\frac{F(tu)}{F(t)} = u^p \mbox{ for all } u > 0.
\end{equation}
The following result is due to Kalton \cite[Lemma 6.1]{Ka93}.
\vspace{3mm}

%%%%%%%%%%%%%%%%%%%%%%%%%%%  Lemma 1=2.1
\begin{lemma} \label{lemma1}
Let $F$ be an Orlicz function. The following three conditions are equivalent:
\begin{itemize}
\item[$(a)$] F is equivalent to a regularly varying  Orlicz function at $\infty$ of order $p \in [1, \infty)$.
\item[$(b)$] There exists a constant $C > 0$ such that for any $u \in (0, 1]$ we can find $t_0 = t_0(u)$ with
\begin{equation*}
\frac{F(tu)}{F(t)} \stackrel{C}{\approx} u^p ~~{\it for ~all} ~~t \geq t_0.
\end{equation*}
\end{itemize}
\end{lemma}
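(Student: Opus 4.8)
The natural plan is to prove the equivalence cyclically; if the statement in fact contains a third condition — and, judging by its likely shape, that condition asserts that the upper and lower Matuszewska‑type quantities $\limsup_{t\to\infty}F(tu)/F(t)$ and $\liminf_{t\to\infty}F(tu)/F(t)$ are each $\approx u^p$ — then it will show up as an intermediate stage of the argument below. The implication $(a)\Rightarrow(b)$ is immediate: if $F\stackrel{A}{\approx}G$ with $G$ regularly varying at $\infty$ of order $p$, then for each fixed $u\in(0,1]$ one has $G(tu)/G(t)\to u^p$, so $\tfrac12 u^p\le G(tu)/G(t)\le 2u^p$ for all $t\ge t_0(u)$; multiplying through by the equivalence constants gives $F(tu)/F(t)\stackrel{2A^2}{\approx}u^p$ for $t\ge t_0(u)$, which is $(b)$ with $C=2A^2$.

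The whole content is therefore $(b)\Rightarrow(a)$, and I would first reformulate $(b)$. Put $M_F(u)=\limsup_{t\to\infty}F(tu)/F(t)$ and $m_F(u)=\liminf_{t\to\infty}F(tu)/F(t)$ for $u>0$. Condition $(b)$ gives $C^{-1}u^p\le m_F(u)\le M_F(u)\le Cu^p$ for $u\in(0,1]$. The substitution $s=tu$ yields the identity $M_F(u)=1/m_F(1/u)$; combining this with the submultiplicativity $M_F(uv)\le M_F(u)M_F(v)$, the supermultiplicativity $m_F(uv)\ge m_F(u)m_F(v)$, and the normalizations $M_F(1)=m_F(1)=1$, one upgrades the estimate to all $u>0$:
$$C^{-1}u^p\le m_F(u)\le M_F(u)\le Cu^p\qquad(u>0).$$
Passing to logarithmic coordinates, writing $F(t)=t^p e^{g(\log t)}$ for large $t$, this is precisely the statement that $\limsup_{s\to\infty}|g(s+v)-g(s)|\le\log C$ for every $v\in\mathbb R$; in particular $g(s)=o(s)$, i.e. $\log F(t)/\log t\to p$.

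The remaining and main task is to manufacture, from such a $g$, a regularly varying Orlicz function $G\approx F$ of order $p$ — equivalently, to replace $g$ by a \emph{slowly oscillating} function $\tilde g$ (meaning $\tilde g(s+v)-\tilde g(s)\to0$ for every fixed $v$) which differs from $g$ by a bounded function, and then to ensure that $G(t)=t^p e^{\tilde g(\log t)}$ (suitably defined near the origin) is convex. I would obtain $\tilde g$ by a regularization/averaging argument on the multiplicative scale. The key structural input is that the convexity of $F$ — more precisely, the monotonicity of $u\mapsto F(tu)/F(t)$ — forces the oscillation bound in $(b)$ to be \emph{uniform on bounded windows}: for every $h>0$ one gets $\limsup_{s\to\infty}\sup_{0\le\tau\le h}|g(s+\tau)-g(s)|\le\log C+ph<\infty$, which is exactly the regularity of $g$ that makes a smoothing procedure converge. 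Once $\tilde g$ is available, $G(tu)/G(t)=u^p e^{\tilde g(\log t+\log u)-\tilde g(\log t)}\to u^p$, so $G$ is regularly varying of order $p$; since $G$ is then $\Delta_2$ and ``almost $p$-convex'', replacing it by a constant multiple of its greatest convex minorant vanishing at $0$, or by an iterated Karamata primitive, produces a bona fide Orlicz function still equivalent to $F$, which is $(a)$.

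The main obstacle is exactly the construction of $\tilde g$: bridging the gap between the pointwise‑in‑$u$ thresholds $t_0(u)$ that $(b)$ supplies and the local uniformity in $u$ needed to run the regularization — this is where the convexity of $F$, rather than mere boundedness of the oscillation of $g$, is indispensable — and then showing that the residual oscillation of $g$ can be genuinely annihilated, not merely bounded, by a bounded modification. By comparison, the passage from a regularly varying function to an equivalent one that is a genuine Orlicz function is a routine matter, though it needs a little extra care when $p=1$, where one exploits that $F(t)/t$, hence $g$ after removing the linear term, is already non-decreasing and that this is preserved under averaging.
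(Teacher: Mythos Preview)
The paper does not supply its own proof of this lemma; it is quoted from Kalton \cite[Lemma~6.1]{Ka93} (the phrase ``three conditions'' with only two listed is a slip in the paper's statement). So there is no in-text argument to compare against.

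On the substance of your outline: $(a)\Rightarrow(b)$ is correct. For $(b)\Rightarrow(a)$ your reduction --- passing to $M_F,m_F$, extending the two-sided bound to all $u>0$ via $M_F(u)=1/m_F(1/u)$, writing $F(t)=t^pe^{g(\log t)}$, and recognising that what is needed is a slowly oscillating $\tilde g$ with $\tilde g-g$ bounded --- is correct and is the natural framework. But the argument stops at the decisive step. ``Regularization/averaging'' is only a slogan, and its most obvious interpretation, a fixed-window average $\tilde g(s)=h^{-1}\int_s^{s+h}g$, does \emph{not} suffice: it keeps $|\tilde g-g|$ bounded and reduces the oscillation of $\tilde g$ over distance $v$ to at most $v(\log C)/h$, but this is still positive, and iterating with the same $h$ yields no further gain. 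What is actually required is either an average over a \emph{growing} window $h(s)\to\infty$, chosen slowly enough that the local uniformity you extracted from convexity still controls $|\tilde g-g|$ while $|\tilde g(s+v)-\tilde g(s)|\le C'v/h(s)\to 0$, or --- closer to the style of Kalton's argument and of the parallel quasi-concave result of Abakumov--Mekler cited for Lemma~\ref{lemma2} --- a direct piecewise construction of $G$ along a rapidly increasing sequence $t_n\to\infty$. Either route is the real content of the lemma, and neither is carried out in your proposal.
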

%%%%%%%%%%%%%%%%%%%%%%%%%%%

Although we do not need it here, there is an analogous definition to the one above for Orlicz functions which are 
regularly varying of order $p$ at $0$ instead of at $\infty$ (see e.g. \cite{Ka93}). However, we do need to consider 
quasi-concave functions which are regularly varying of order $p$ at $0$. Before recalling the definition of these 
we should point out that it is not quite analogous to the definitions for regularly varying Orlicz functions, because
the power $p$ which appeared in (\ref{def1}) and in the corresponding definition in \cite{Ka93} will be replaced in
(\ref{def2}) by the power $1/p$.

A function $\varphi: [0, 1] \rightarrow [0, \infty)$ which is quasi-concave and satisfies $\varphi(0)=0$ is said to be
{\it regularly varying at zero of order $p$} $(1\leq p\leq \infty)$ if
\begin{equation} \label{def2}
\lim_{t\to 0^+}\frac{\varphi(t u)}{\varphi(t)} = u^{1/p}\;\;\mbox{for all}\;\;u>0.
\end{equation}

Abakumov and Mekler \cite[Theorem 5]{AM94} proved that a quasi-concave function $\varphi$ is equivalent to 
a quasi-concave regularly varying function at zero of order $p\in [1,\infty]$ if and only if
$$
\limsup_{t\to 0^+} \frac{\varphi(tu)}{\varphi(t)}\approx u^{1/p} ~~ {\rm for ~all} ~ u>0.
$$
The following lemma is an immediate consequence of this result (see also the proof of
Theorem 5 in \cite{AM94}).

%%%%%%%%%%%%%%%%%%%%%%%%%%%%%%%%%%  Lemma 2=2.2
\begin{lemma} \label{lemma2}
A quasi-concave function $\varphi$ on $[0, 1]$ is equivalent to a quasi-concave 
function which is regularly varying at zero of order $p\in [1,\infty]$ if and only
if for some $C > 0$ and any $N\in \mathbb N$ there exists $\tau(N)\in(0,1]$ such
that  for all $0< t \leq \tau(N), 0 < t N \leq 1$ we have
\begin{equation}\label{regVarEquivalent}
\frac{\varphi(Nt)}{\varphi(t)}  \stackrel{C}{\approx} N^{1/p}.
\end{equation}
\end{lemma}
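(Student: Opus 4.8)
The plan is to deduce Lemma~\ref{lemma2} directly from the Abakumov--Mekler criterion quoted just before it, namely that a quasi-concave $\varphi$ is equivalent to a quasi-concave function regularly varying at zero of order $p$ if and only if $\limsup_{t\to 0^+}\varphi(tu)/\varphi(t)\approx u^{1/p}$ for every $u>0$. So it suffices to show that condition \eqref{regVarEquivalent} is equivalent to this $\limsup$-condition. One implication is immediate: if $\varphi\stackrel{K}{\approx}\psi$ with $\psi$ quasi-concave and regularly varying at zero of order $p$, then by \eqref{def2} applied with $u=N$ we have $\psi(Nt)/\psi(t)\to N^{1/p}$ as $t\to 0^+$ for each fixed $N\in\mathbb N$, so for every $N$ one can pick $\tau(N)\in(0,1/N]$ with $\psi(Nt)/\psi(t)\stackrel{2}{\approx}N^{1/p}$ on $(0,\tau(N)]$; applying the equivalence $\varphi\stackrel{K}{\approx}\psi$ to numerator and denominator gives $\varphi(Nt)/\varphi(t)\stackrel{2K^2}{\approx}N^{1/p}$ on the same range, which is \eqref{regVarEquivalent} with $C=2K^2$, uniform in $N$.

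For the converse I would pass from integers to an arbitrary real $u>0$ using only the two monotonicities built into quasi-concavity (that $\varphi$ is non-decreasing and $t\mapsto\varphi(t)/t$ is non-increasing). If $u\ge1$, set $N:=[u]+1$, so $u\le N\le 2u$; then $\varphi(tu)\le\varphi(tN)\le(N/u)\varphi(tu)\le 2\varphi(tu)$, hence $\varphi(tu)\stackrel{2}{\approx}\varphi(tN)$, and combining this with \eqref{regVarEquivalent} and with $N^{1/p}\stackrel{2}{\approx}u^{1/p}$ gives $\varphi(tu)/\varphi(t)\stackrel{4C}{\approx}u^{1/p}$ for all sufficiently small $t$. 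If $0<u<1$, set $N:=[1/u]+1$, so $1/N\le u\le 2/N$; the same two monotonicities give $\varphi(tu)\stackrel{2}{\approx}\varphi(t/N)$, and writing $s=t/N$ we have $\varphi(t/N)/\varphi(t)=\bigl(\varphi(Ns)/\varphi(s)\bigr)^{-1}$, which by \eqref{regVarEquivalent} is $\stackrel{C}{\approx}N^{-1/p}\stackrel{2}{\approx}u^{1/p}$ for all small $t$. In both cases — and trivially when $p=\infty$, where every exponent $1/p$ equals $0$ — we obtain $\limsup_{t\to 0^+}\varphi(tu)/\varphi(t)\stackrel{c}{\approx}u^{1/p}$ for all $u>0$ with a single constant $c$ (a fixed multiple of $C$), and the Abakumov--Mekler theorem then finishes the argument.

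The only genuinely delicate point is the reciprocal step in the case $0<u<1$: there one must know not merely that $\limsup_{t\to 0^+}\varphi(Nt)/\varphi(t)$ is of order $N^{1/p}$ from above, but that $\varphi(Nt)/\varphi(t)$ is bounded \emph{below} by a multiple of $N^{1/p}$ for \emph{all} sufficiently small $t$ — that is, \eqref{regVarEquivalent} must be read as a genuine two-sided estimate holding on a whole neighbourhood of $0$, not as a one-sided $\limsup$ statement. This is precisely why \eqref{regVarEquivalent} is the right reformulation of the Abakumov--Mekler condition; apart from this observation, everything reduces to the routine quasi-concavity comparison of $\varphi$ at the points $tu$ and $tN$ (or $t/N$) carried out above.
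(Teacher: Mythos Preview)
Your argument is correct and is precisely the ``immediate consequence'' that the paper leaves to the reader: the forward direction comes straight from the definition \eqref{def2} applied with $u=N$, and for the converse you pass from $N\in\mathbb N$ to arbitrary $u>0$ via quasi-concavity and then invoke the Abakumov--Mekler criterion. Your remark about needing the genuine two-sided bound in \eqref{regVarEquivalent} (not merely a $\limsup$) for the $0<u<1$ reciprocal step is exactly the point, and the constants you track are fine.
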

%%%%%%%%%%%%%%%%%%%%%%%%%%%

Recall that the fundamental function of an Orlicz space $L_F$ on $[0,1]$ with the Luxemburg-Nakano
norm is $\varphi_{L_F}(t) = \frac{1}{F^{-1}(1/t)}$ for $0<t\leq 1$ and $\varphi_{L_F}(0) = 0$, 
where $F^{-1}$ is the inverse of $F$ (see formula (9.23) in \cite{KR61} on page 79 of the English version or 
Corollary 5 in \cite{Ma89} on page 58). The function $\varphi_{L_F}$ is quasi-concave but not necessarily 
concave on $[0, 1]$ (see \cite{KR61} or \cite{Ma89}).

The notions of regularly varying Orlicz and quasi-concave functions are closely interrelated. 
Using Lemmas  \ref{lemma1} and \ref{lemma2} and routine arguments we establish the following 
quantitative result showing a connection between an regularly varying Orlicz function $F$ and 
the fundamental function of the corresponding Orlicz space $L_F$.

%%%%%%%%%%%%%%%%%%%%%%%%%%%%%%%  PROPOSITION 1= 2.3
\begin{proposition} \label{equiv of functions}
Suppose that $p \in [1, \infty)$ and let $F$ be an Orlicz function such that both $F$ and its complementary 
function $F^{*}$ satisfy the $\Delta_2$-condition for large $u$. Then the following conditions are equivalent:
\begin{itemize}
\item[$(a)$] There exists a constant $C^{\prime} > 0$ such that for any $N\in \mathbb N$ there exists $\tau(N)\in(0,1]$ with
\begin{equation}\label{regVarEquivalentConvex}
\frac{F(u)}{F(uN^{-1/p})}  \stackrel{C^{\prime}}{\approx} N ~ {\rm for ~all} ~ u \geq F^{-1}(1/\tau(N)).
\end{equation}
\item[$(b)$] There exists a constant $C > 0$ such that for any $N\in\mathbb N$ the fundamental function $\varphi_{L_F}$ satisfies condition
\eqref{regVarEquivalent} with the same $\tau(N).$
\end{itemize}
\end{proposition}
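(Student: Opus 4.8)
The proposition is a dictionary entry between the Orlicz function $F$ and the fundamental function $\varphi_{L_F}$, read through the identity $\varphi_{L_F}(t)=1/F^{-1}(1/t)$; the plan is to make that dictionary precise. Writing $\varphi=\varphi_{L_F}$ and $s=1/t$ one has
$$
\frac{\varphi(Nt)}{\varphi(t)}=\frac{F^{-1}(1/t)}{F^{-1}(1/(Nt))}=\frac{F^{-1}(s)}{F^{-1}(s/N)},
$$
so \eqref{regVarEquivalent} for $\varphi_{L_F}$ is literally the assertion $F^{-1}(s)/F^{-1}(s/N)\stackrel{C}{\approx}N^{1/p}$ on the corresponding range of $s$. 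On the other hand, substituting $u=F^{-1}(s)$ (so that $F(u)=s$) in \eqref{regVarEquivalentConvex} turns it into $F\big(F^{-1}(s)\,N^{-1/p}\big)\stackrel{C'}{\approx}s/N$, and applying the increasing map $F^{-1}$ to this yields $F^{-1}(s)\,N^{-1/p}\approx F^{-1}(s/N)$, i.e. \eqref{regVarEquivalent} once more. Thus, modulo absorption of multiplicative constants and a matching of ranges, (a) and (b) are the same statement written on the two sides of the correspondence $F\leftrightarrow F^{-1}$, and it remains only to carry out the reduction carefully, once in each direction.

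First I would record the analytic facts that make $F^{-1}$ a genuine inverse. Since $F$ is convex and increasing with $F(0)=0$ and satisfies $\Delta_2$ for large $u$, it is finite and continuous on $[0,\infty)$; since moreover $F^{*}$ satisfies $\Delta_2$ for large $u$, $F$ cannot be affine on any unbounded interval and is therefore strictly increasing for large arguments. As conditions (a) and (b) only involve the behaviour of $\varphi_{L_F}$ near $0$, equivalently of $F$ near $\infty$, one may harmlessly modify $F$ on a bounded interval and assume that $F$ is a homeomorphism of $[0,\infty)$, so that $F\circ F^{-1}=F^{-1}\circ F=\mathrm{id}$. The two $\Delta_2$-hypotheses then enter through their two dual forms: $F\in\Delta_2$ for large $u$ gives $F(\lambda u)\approx F(u)$ for all large $u$ (for each fixed $\lambda>0$, by iteration), while $F^{*}\in\Delta_2$ for large $u$ is well known to force $F(cu)\ge 2F(u)$ for large $u$ and some $c>1$, which upon inversion gives the doubling estimate $F^{-1}(2s)\le c\,F^{-1}(s)$ — hence $F^{-1}(\lambda s)\approx F^{-1}(s)$ — for all large $s$. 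These are exactly what lets one swallow the constants $C,C'$ that land inside the arguments of $F$ or $F^{-1}$ after the substitutions above, at the sole cost of enlarging the equivalence constant.

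With this in hand the two implications are symmetric, each using only one of the two dual estimates. For (a)$\Rightarrow$(b): from \eqref{regVarEquivalentConvex}, the substitution $u=F^{-1}(s)$ gives $\tfrac{s}{C'N}\le F\big(F^{-1}(s)N^{-1/p}\big)\le\tfrac{C's}{N}$; applying $F^{-1}$ and then its doubling property (to absorb the factor $C'$) yields $F^{-1}(s/N)\stackrel{C}{\approx}N^{-1/p}F^{-1}(s)$, which is \eqref{regVarEquivalent} after $s=1/t$. For (b)$\Rightarrow$(a) one runs the same chain in reverse, now using the doubling of $F$ itself to absorb the constant sitting inside $F$. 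Lemmas \ref{lemma1} (applied with $u=N^{-1/p}$, $N\in\mathbb N$) and \ref{lemma2} serve to recognize (a) and (b) as the conditions occurring there, and, if desired, to identify both with ``$F$ is equivalent to a regularly varying Orlicz function at $\infty$ of order $p$'' and with ``$\varphi_{L_F}$ is equivalent to a regularly varying quasi-concave function at $0$ of order $p$''.

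The one point that needs care — and the reason a common $\tau(N)$ can be used in (a) and (b) — is the bookkeeping of ranges. In each direction, the doubling estimates above are only applicable once the arguments at which they are invoked (roughly $1/(N\tau(N))$ for $F^{-1}$ in (a)$\Rightarrow$(b), and $N^{-1/p}F^{-1}(1/\tau(N))$ for $F$ in (b)$\Rightarrow$(a)) exceed the fixed $\Delta_2$-thresholds of $F$ and $F^{-1}$; if the $\tau(N)$ supplied by the hypothesis is too large for this, one simply passes to a smaller $\tau(N)$. This is harmless because shrinking $\tau(N)$ makes \emph{both} conditions easier — in \eqref{regVarEquivalent} the pair ``$0<t\le\tau(N)$, $0<tN\le1$'' is anyway the same as ``$0<t\le\min(\tau(N),1/N)$'', and in \eqref{regVarEquivalentConvex} a smaller $\tau(N)$ only raises the lower bound $F^{-1}(1/\tau(N))$ for $u$ — so the minimum of the two $\tau$'s produced in the two directions works simultaneously for (a) and (b). I expect this range/threshold bookkeeping, rather than any conceptual difficulty, to be the main (and essentially routine) obstacle; the substance of the proof is the single substitution $\varphi_{L_F}(t)=1/F^{-1}(1/t)$.
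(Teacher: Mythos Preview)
Your proposal is correct and is precisely what the paper has in mind: the paper gives no detailed proof of this proposition, stating only that it follows ``using Lemmas~\ref{lemma1} and~\ref{lemma2} and routine arguments,'' and your substitution $\varphi_{L_F}(t)=1/F^{-1}(1/t)$ together with the doubling of $F$ and $F^{-1}$ (from the two $\Delta_2$-hypotheses) is exactly the routine argument intended. Your treatment of the threshold/range bookkeeping is also the right way to justify the ``same $\tau(N)$'' clause.
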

%%%%%%%%%%%%%%%%%%%%%%%%%%%

%%%%%%%%%%%%%%%%%%%%%%%%%%%%%%%%%%%%%% Section 3
\begin{section}
{\bf $w$--decomposable Banach lattices}
\end{section}

Later on $C$ will denote a constant whose value may be different in its different appearances.

The following notion was introduced in paper \cite{Ti11} and it will be very important for us.
Let $X$ be a Banach lattice on $(\Omega,\Sigma,\mu)$ and $w$ be a weight on $\Omega$.
We say that $X$ is {\it $w$-decomposable} if there exists $C > 0$ such that for any $n\in \mathbb N$ and for all 
$x_1,\dots,x_n,$ $y_1,\dots,y_n$ in $X$ satisfying the conditions:
\begin{equation}\label{equalnorms}
\|x_i\|_X = \|y_i\|_X,  ~ i =1,2, \dots, n,
\end{equation}
and
\begin{equation}\label{arrangement}
\inf w({\rm supp}\, x_i \cup {\rm supp}\, y_i) \geq 2 \sup w({\rm supp}\, x_{i+1} \cup {\rm supp}\, y_{i+1}),  ~ i =1,2, \dots, n-1,
\end{equation}
we have that
\begin{equation}\label{equalsums}
\|\sum\limits_{i=1}^{n}x_i \|_X  \stackrel{C}{\approx} \|\sum\limits_{i=1}^{n}y_i \|_X.
\end{equation}

To clarify the meaning of condition \eqref{arrangement}, consider the following example:
let $X$ be a Banach lattice of Lebesgue measurable functions on $[0,1]$ and
$w(t)=1/t$ $(0<t\leq 1)$. Then \eqref{arrangement} is equivalent to the following inequality
$$2\sup({\rm supp}\, x_i\cup {\rm supp}\,y_i )\leq \inf({\rm supp}\, x_{i+1}\cup {\rm supp}\,y_{i+1} ),  ~ i =1,2, \dots, n-1.$$
In other words, there are some intervals $[a_i, b_i]\subset[0, 1]$ (depending on $x_i$, $y_i$) such that
$2b_i\leq a_{i+1}$ $(i=1,2,\dots,n-1), {\rm supp}\, x_i\subset[a_i, b_i]$ and ${\rm supp}\,y_i\subset[a_i, b_i]$ $(i=1,2,\dots,n)$.

It is not hard to see that $1/t$-decomposability is equivalent to $1/t^q$-decomposability and, more generally,
$w$-decomposability and $w^q$-decomposability are equivalent for any weight $w$ and any $q>0$
(see \cite{Ti11b}, Corollary~2.2 on page~61).

It turns out that the $w$-decomposability of a Banach lattice $X$ guarantees the $K$--monotonicity
of the weighted couple $(X, X(w))$. More precisely, Tikhomirov in \cite{Ti11} obtained the following
generalization of Kalton's results from \cite{Ka93}.

\vspace{3mm}
%%%%%%%%%%%%%%%%%%%%%%%%%%%%%%%  THEOREM A=3.1
\begin{theorem}\label{Theor Tikhomirov}
{\it  Suppose $X$ is a
Banach lattice on a $\sigma$--finite measure space $(\Omega, \Sigma,
\mu)$ with supp $X = \Omega$ which has the Fatou property and $w$ is a (non-trivial) weight on
$\Omega$. Then the Banach couple $(X, X(w))$ is $K$--monotone if and
only if $X$ is $w$-decomposable.
}
\end{theorem}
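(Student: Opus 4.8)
The plan is to establish both implications by reducing the study of the $K$-functional of the couple $(X, X(w))$ to a computation over "layers" on which the weight $w$ is roughly constant. First I would decompose $\Omega$ into a countable family of measurable sets $\Omega_k = \{t \in \Omega : 2^k \leq w(t) < 2^{k+1}\}$, $k \in \mathbb{Z}$ (discarding null sets), so that on each $\Omega_k$ the weight satisfies $w \stackrel{2}{\approx} 2^k$. For $x \in X + X(w)$ one then has, up to constants, $K(t, x; X, X(w)) \approx \inf \{ \|x_0\|_X + t \|x_1 w\|_X : x = x_0 + x_1 \}$, and the optimal (nearly optimal) splitting can be taken layer-by-layer: on $\Omega_k$ one puts the mass into the $X$-component if $t \cdot 2^k \geq 1$ and into the $X(w)$-component otherwise. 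This yields a two-sided estimate of the form $K(2^{-j}, x; X, X(w)) \approx \|\sum_{k \leq j} x\chi_{\Omega_k}\|_X + 2^{-j}\|\sum_{k > j} x w \chi_{\Omega_k}\|_X$, which I would record as a preliminary lemma; the arrangement condition \eqref{arrangement} is exactly the combinatorial hypothesis that makes consecutive $x_i$ land in layers separated by at least one index, so that disjoint "blocks" behave independently in this formula.

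For the direction "$w$-decomposable $\Rightarrow$ $K$-monotone": suppose $x \in X$, $y \in X + X(w)$ with $K(t, y) \leq K(t, x)$ for all $t > 0$; I must produce $z$ equimeasurable-in-the-lattice-sense (in fact, with the same norm contribution per layer) so that the $w$-decomposability inequality \eqref{equalsums} applies and gives $\|y\|_{\text{candidate}} \leq C \|x\|_X$, after which the Brudny\u\i--Krugljak theory (together with Theorem \ref{Theor Tikhomirov}'s own target, so this is really the content) lets one conclude. Concretely, using the layer formula and the hypothesis on $K$-functionals at the points $t = 2^{-j}$, I would build, block by block over the layers, elements whose $X$-norms match those of a near-optimal decomposition of $x$ and whose supports satisfy \eqref{arrangement}; the classical Calder\'on--Mityagin / Sparr type "filling" argument does this for $(L_p, L_p(w))$, and here $w$-decomposability is precisely the abstract substitute for the concrete rearrangement that makes the filling legitimate in a general lattice. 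One then checks that the resulting element dominates $y$ pointwise (or after a harmless equimeasurable modification) and has $X$-norm $\lesssim \|x\|_X$.

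For the converse "$K$-monotone $\Rightarrow$ $w$-decomposable": given $x_1, \dots, x_n$ and $y_1, \dots, y_n$ as in \eqref{equalnorms}--\eqref{arrangement}, set $x = \sum x_i$ and $y = \sum y_i$. Using the layer formula and the separation in \eqref{arrangement}, I would show $K(t, y; X, X(w)) \stackrel{C}{\approx} K(t, x; X, X(w))$ for all $t>0$: indeed each $y_i$ (resp. $x_i$) is concentrated where $w$ lies in an interval of the form $[w_i/2, 2w_i]$ with the $w_i$ dropping geometrically, so in the inf defining $K(t,\cdot)$ the cross-over between the two components happens block by block at essentially the same threshold for $x$ and for $y$, and within a block the norms agree by \eqref{equalnorms} and the ideal property. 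Since $X$ (having the Fatou property) is itself an interpolation space with respect to $(X, X(w))$ — it is the $K$-orbit of any of its elements, or more simply $X = (X,X(w))^K_\Phi$ for a suitable $\Phi$ — $K$-monotonicity of the couple forces $\|y\|_X \stackrel{C}{\approx}\|x\|_X$, which is \eqref{equalsums}.

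The main obstacle, and the step I would spend the most care on, is the layer-decomposition lemma in the generality of an arbitrary Banach lattice with the Fatou property: one must verify that a near-optimal decomposition for the $K$-functional can indeed be chosen to respect the layers $\Omega_k$ (this uses that the $X$- and $X(w)$-norms are both lattice norms, so truncating a decomposition to its "good" layers only decreases norms), and one must keep the multiplicative constants uniform in $n$ — the geometric factor $2$ in \eqref{arrangement} is what guarantees that summing the per-block errors produces a convergent geometric series rather than a blow-up. Handling the borderline layer $k = j$ (where $t \cdot 2^k \approx 1$) and the case of non-trivial $w$ that is unbounded only on one side (so that the layer index set is a half-line rather than all of $\mathbb{Z}$) are the routine but necessary technical points.
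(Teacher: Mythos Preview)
The paper does not actually prove Theorem~\ref{Theor Tikhomirov}; it is quoted from \cite{Ti11} and used as a black box throughout. So there is no ``paper's own proof'' to compare to, and your proposal must stand on its own merits.

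Your layer decomposition $\Omega_k=\{2^k\le w<2^{k+1}\}$ and the resulting formula
$K(t,x;X,X(w))\approx \|\min(1,tw)\,x\|_X$ are correct and standard; this is the right starting point. The sketch for the direction ``$w$-decomposable $\Rightarrow$ $K$-monotone'' is vague but points toward the right machinery (a $K$-divisibility/filling argument combined with \eqref{equalsums}); this is plausible, though the actual construction in a general lattice requires more than you have written.

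The converse direction, however, contains a genuine error. You claim that if $x_i,y_i$ satisfy \eqref{equalnorms}--\eqref{arrangement}, then $K(t,\sum y_i)\stackrel{C}{\approx}K(t,\sum x_i)$ for all $t>0$, arguing that ``within a block the norms agree by \eqref{equalnorms} and the ideal property''. This is false: condition \eqref{arrangement} controls only the separation \emph{between} blocks, not the width of any single block, so the weight $w$ can vary wildly on $\mathrm{supp}\,x_i\cup\mathrm{supp}\,y_i$, and \eqref{equalnorms} says nothing about $\|wx_i\|_X$ versus $\|wy_i\|_X$. Concretely, take $X=L_2[0,1]$, $w(t)=1/t$, $n=2$, and for small $\varepsilon>0$ set
\[
y_1=\chi_{[\varepsilon,2\varepsilon]},\qquad x_1=\sqrt{8\varepsilon}\,\chi_{[1/8,1/4]},\qquad x_2=y_2=\chi_{[1/2,1]}.
\]
Then $\|x_1\|_{L_2}=\|y_1\|_{L_2}=\sqrt{\varepsilon}$ and \eqref{arrangement} holds, yet at $t=\varepsilon$ one computes $K(t,\sum y_i)^2\approx \varepsilon/2$ while $K(t,\sum x_i)^2\approx \varepsilon^2$, so $K(t,\sum y_i)/K(t,\sum x_i)\approx 1/\sqrt{2\varepsilon}\to\infty$. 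Thus the $K$-functionals are \emph{not} uniformly comparable, even though (as must be the case, since $L_2$ is trivially $w$-decomposable) $\|\sum x_i\|_{L_2}=\|\sum y_i\|_{L_2}$. Your route through ``$K(t,y)\lesssim K(t,x)$, hence $\|y\|_X\lesssim\|x\|_X$ by $K$-monotonicity of $X$'' therefore cannot work as stated; the proof in \cite{Ti11} necessarily uses a different mechanism to pass from $K$-monotonicity of the couple to the norm equivalence \eqref{equalsums}.
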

%%%%%%%%%%%%%%%%%%%%%%%%%%%

In the case of symmetric spaces on $[0,1]$ the notion of
$w$-decomposability can be clarified by using the well--known Krivine
theorem.

%%%%%%%%%%%%%%%%%%%%%%%%%%%%%%%%  PROPOSITION 2=3.2
\begin{proposition} \label{prop2}
Let $w$ be a weight on $[0,1]$. A symmetric space
$X$ on $[0,1]$ is $w$-decomposable if and only if there exist $C >0$ and $1\leq
p\leq\infty$ such that for any $n\in\mathbb N$ and for all
$x_1, x_2, \dots, x_n\in X$ satisfying the conditions
\begin{equation}\label{decompForSym}
\inf w({\rm supp}\, x_i)\ge 2 \sup w({\rm supp}\, x_{i+1}),\; 1 \leq i \leq n-1,
\end{equation}
we have that
\begin{equation} \label{decompForSym7}
\left\|\sum_{i=1}^{n}x_i\right\|_X ~ \stackrel{C}{\approx} ~ \left(\sum_{i=1}^{n} \|x_i\|_X^p\right)^{1/p},
\end{equation}
where, as usual, in the case $p=\infty$ the right hand side
should be replaced by $\max_{1 \leq i \leq n}  \|x_i\|_X$.
\end{proposition}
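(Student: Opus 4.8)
\medskip
\noindent\emph{Proof idea.} The implication from \eqref{decompForSym7} to $w$-decomposability is immediate: if $x_1,\dots,x_n$ and $y_1,\dots,y_n$ satisfy $\|x_i\|_X=\|y_i\|_X$ and \eqref{arrangement}, then, since passing to a subset increases the infimum of $w$ and decreases its supremum, \eqref{arrangement} forces $\inf w({\rm supp}\,x_i)\ge 2\sup w({\rm supp}\,x_{i+1})$ and likewise for the $y_i$; hence \eqref{decompForSym7} applies to both families and, because $\big(\sum_i\|x_i\|_X^p\big)^{1/p}=\big(\sum_i\|y_i\|_X^p\big)^{1/p}$, we obtain $\|\sum_i x_i\|_X\stackrel{C^2}{\approx}\|\sum_i y_i\|_X$.

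For the converse I would first use $w$-decomposability to replace every admissible family (i.e.\ one satisfying \eqref{decompForSym}) by multiples of characteristic functions on tiny sets. Since $w$ is non-trivial, one of $w,1/w$ is unbounded, and replacing $w$ by $1/w$ merely reverses the order of the pieces in \eqref{decompForSym}; hence the level sets $\{2^k\le w<2^{k+1}\}$ have positive measure for an unbounded set of integers $k$, so for any $n$ and any prescribed $t_1,\dots,t_n>0$ one can pick pairwise disjoint $A_1,\dots,A_n\subset[0,1]$ of arbitrarily small, arbitrarily prescribed measures with $\inf w(A_i)\ge 2\sup w(A_{i+1})$, and then $\mu_i\chi_{A_i}$ with $\mu_i:=t_i/\varphi_X(|A_i|)$ is an admissible family with norms $t_1,\dots,t_n$. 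Applying $w$-decomposability, first between an arbitrary admissible family and one of the form $(\mu_i\chi_{A_i})$, and then between two families of the latter type with different measures, shows that $\|\sum_i x_i\|_X$ is, up to a fixed constant, independent of the admissible family with given norms $t_1,\dots,t_n$ and even of the measures $|A_i|$; write $g_n(t_1,\dots,t_n)$ for the resulting quantity. Because the $X$-norm of a disjoint sum of multiples of characteristic functions depends only on the multiset of (coefficient, measure) pairs, $g_n$ is, up to a fixed constant, a symmetric, coordinatewise non-decreasing, positively $1$-homogeneous function.

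The core of the proof is to show $g_n(t_1,\dots,t_n)\approx\big(\sum_i t_i^p\big)^{1/p}$ for a single $p\in[1,\infty]$. Grouping $n=n_1+\dots+n_k$ admissible pieces into $k$ consecutive blocks, the block-sums are again pairwise disjoint with rapidly decaying weights on their supports, so one application of $w$-decomposability to the $k$ blocks (each replaced by a characteristic-function multiple of equal norm) yields $g_n(t)\stackrel{C}{\approx}g_k\big(g_{n_1}(\cdot),\dots,g_{n_k}(\cdot)\big)$ with $C$ independent of $n$ and $k$. Taking all arguments equal to $1$, the function $\phi(n):=g_n(1,\dots,1)$ is non-decreasing, satisfies $1\le\phi(n)\le n$ and $\phi(mn)\stackrel{C}{\approx}\phi(m)\phi(n)$; a Fekete-type argument then gives $\phi(n)\stackrel{C'}{\approx}n^{1/p}$ for some $p\in[1,\infty]$ ($p=\infty$ exactly when $\phi$ is bounded), which one recognizes as the Krivine exponent of the lattice $X$. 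Finally, to pass from equal to general norms (say $p<\infty$; the case $p=\infty$ is analogous with $\max_i t_i$), approximate each $t_i^p$ by $m_i/L$ with $m_i\in\mathbb{N}$ and $L$ large, realise $\sum_i m_i$ admissible pieces each of norm $L^{-1/p}$, and group them into $n$ consecutive blocks of sizes $m_1,\dots,m_n$; the $j$-th block has norm $\approx\phi(m_j)L^{-1/p}\approx t_j$, and the block relation above together with $\phi\big(\sum_i m_i\big)\approx\big(\sum_i m_i\big)^{1/p}$ gives $g_n(t_1,\dots,t_n)\approx\big(\sum_i m_i/L\big)^{1/p}\approx\big(\sum_i t_i^p\big)^{1/p}$, the monotonicity and homogeneity of $g_n$ absorbing the approximation errors with a constant independent of $n$.

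The step I expect to be the main obstacle is keeping the equivalence constants uniform in $n$ when passing from the equal-norm case to general norms: a naive iteration of a two-variable estimate $g_2(a,b)\approx(a^p+b^p)^{1/p}$ would lose a factor at each of the $n-1$ steps, so it is essential to use $w$-decomposability in its full form (all $k$ blocks at once) in combination with the already-established asymptotics $\phi(n)\approx n^{1/p}$, rather than to build $g_n$ up from $g_2$. A minor but necessary preliminary point is checking that every configuration used above can indeed be placed inside $[0,1]$ subject to \eqref{decompForSym}, which is exactly where the non-triviality of $w$ enters.
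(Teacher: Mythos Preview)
Your argument is correct and takes a genuinely different route from the paper's. The paper invokes Krivine's theorem as a black box: for some $p\in[1,\infty]$ and every $m$, there exist disjoint equimeasurable $y_1,\dots,y_m\in X$ of norm~$1$ spanning $\ell_p^m$ with constant~$2$, each supported on a set of measure at most $1/m$; given an admissible $(x_i)_{i=1}^n$, one picks $m$ large, uses the symmetry of $X$ to place $y_i$ inside $\mathrm{supp}\,x_i$, and a single application of $w$-decomposability yields $\big\|\sum_i\alpha_i x_i/\|x_i\|_X\big\|_X\approx\big\|\sum_i\alpha_i y_i\big\|_X\approx\|(\alpha_i)\|_p$, whence \eqref{decompForSym7} by taking $\alpha_i=\|x_i\|_X$. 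You instead manufacture $p$ from scratch via the block relation and a Fekete-type argument on $\phi(n)=g_n(1,\dots,1)$, and your passage to general norms by rational approximation---using the block relation once with $\phi(n)\approx n^{1/p}$ already in hand, rather than iterating $g_2$---correctly keeps the constants uniform in~$n$. Your route is longer but self-contained, since Krivine's theorem is itself a substantial result; the paper's proof is only a few lines but outsources the identification of $p$ entirely to Krivine. Both proofs exploit the same basic manoeuvre---using the symmetry of $X$ to put norm-prescribed pieces on arbitrarily small sets inside given supports---but with that in hand the paper simply reads off $p$ from Krivine, whereas you recover it by elementary means.
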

%%%%%%%%%%%%%%%%%%%%%%%%%%%

%%%%%%%%%%%%%
\begin{proof}
By Krivine's theorem (see \cite[Theorem 2.b.6]{LT79} or \cite{Ro78}), there exists $p\in [1/\beta_X, 1/\alpha_X]$ 
such that for every $m\in\mathbb N$ there
are pairwise disjoint equimeasurable functions $y_1, y_2, \dots, y_m$ $\in X, \|y_k\|_X = 1$ $(k=1, 2, \dots, m),$ 
such that for any $\alpha_k \in \mathbb R$ $(k=1, 2, \dots, m)$ we have
\begin{equation}\label{Kriv Theorem}
\frac12\|(\alpha_k)\|_p\le\Big\|\sum_{k=1}^m \alpha_ky_k\Big\|_X\le
2\|(\alpha_k)\|_p.
\end{equation}
Obviously, the support of each function $y_k$ has measure not greater than $1/m$.

Suppose that a symmetric space $X$ is $w$-decomposable and that, for some $n\in\mathbb N$,
functions $x_1,\dots,x_n$ in $X$ satisfy condition \eqref{decompForSym}.
Without loss of generality we may assume that $x_i\ne 0$ for each $i = 1, 2, \dots, n$. We
choose $m \in \mathbb N$ sufficiently large so that the support of each $x_i$ has measure greater 
than $1/m$ (and so of course we also have $m \geq n$). For this choice of $m$ we consider the disjoint 
measurable functions $y_1, y_2, \ldots, y_m, \|y_k\|_X = 1 \, (k=1, 2, \dots, m)$, obtained as it is described 
in the previous paragraph. In fact, we will only need the first $n$ of these functions, and we will only 
need special case of (\ref{Kriv Theorem}) for sequences $(\alpha_k)$ which satisfy $\alpha_k = 0$ 
for $k > n$. We may assume without loss of generality, that the support of $y_i$ is contained in the 
support of $x_i$ for each $i = 1, 2, \ldots, n$. (If not, since $X$ is symmetric, we can simply replace each 
$y_i$ by an equimeasurable function which has this property and the above mentioned special 
case of (\ref{Kriv Theorem}) will remain valid.) Thus condition (\ref{decompForSym}) implies that 
condition (\ref{arrangement}) is satisfied and therefore, applying $w$-decomposability (see (\ref{equalsums})) 
and then the special case of (\ref{Kriv Theorem}), we obtain that
$$
\Big\|\sum_{i=1}^n \alpha_i\frac{x_i}{\|x_i\|_X}\Big\|_X \approx
\Big\|\sum_{i=1}^n \alpha_i y_i \Big\|_X \approx \|(\alpha_k)_{i=1}^n\|_p
$$
for all choices of real numbers $\alpha_i$. In particular, when $\alpha_i = \|x_i\|_X$ we obtain 
\eqref{decompForSym7}. Since the reverse implication is obvious, the proof is complete.
\end{proof}

For a given weight $w$ consider the sets
$$
M_k:=\{t\in[0,1]:w(t)\in[2^k, 2^{k+1})\}, k\in\mathbb Z.
$$
Let $(w_r)_{r=1}^{\infty}$ be the non-increasing rearrangement of
the sequence $(m (M_k))_{k=-\infty}^{+\infty}.$ Since the
weight $w$ is non-trivial it follows that $w_r>0$ for all $r=1, 2, \dots$.

For some fixed $n\in\mathbb N$, let $x_1, x_2, \dots, x_n$ be functions in $X$. Suppose first that 
these $n$ functions satisfy condition \eqref{decompForSym}. Then it is easy to see that 
$$
{\rm card}\{i:\,M_k\cap {\rm supp}\, x_i\ne\emptyset\} \leq 1 ~~ {\rm for ~ each} ~~ k\in \mathbb Z.
$$ 
Alternatively, more or less conversely, suppose that the functions $x_i$ satisfy
$$
{\rm card}\{k:\,M_k\cap {\rm supp}\, x_i\ne\emptyset\} \leq 1 ~~ {\rm for ~ each} ~~i\in \{1, 2, \ldots, n\},
$$
i.e., for each $i$, there exists a unique $k_i \in \mathbb Z$ for which ${\rm supp}\, x_i \subset M_{k_i}$. 
Furthermore, suppose $k_1 < k_2 < \ldots < k_n$. While this is not sufficient to imply that the collection 
of functions $x_1, x_2, \ldots, x_n$ satisfies condition \eqref{decompForSym}, it does imply that (after 
relabelling) the collection of functions $x_1, x_3, x_5, \dots$ satisfies  \eqref{decompForSym} and so 
does the collection $x_2, x_4, \dots$. 

By $\{\overline{M}_r\}_{r=1}^{\infty}$ we will denote any rearrangement of the sets $M_k \, (k = 0, \pm 1,
\pm 2, \ldots)$ such that $m(\overline{M}_r) = w_r, r = 1, 2, \ldots$. Thus, by Proposition \ref{prop2}, we obtain 
the following result.

%%%%%%%%%%%%%%%%%%%%%%%%%%%% THEOREM 1=3.3
\begin{theorem} \label{thm 1}
Suppose $w$ is a non-trivial weight on $[0,1]$. A symmetric space
$X$ on $[0,1]$ is $w$-decomposable if and only if there exist $C >0$
and $1\le p\le\infty$ such that for any $n\in\mathbb N$ and
for all $x_1, x_2, \dots, x_n\in X$ satisfying the condition
\begin{equation}\label{inclForSupp}
{\rm supp}\, x_i\subset \overline{M}_i,\;\;1\le i\le n,
\end{equation}
we have \eqref{decompForSym7}. 
\end{theorem}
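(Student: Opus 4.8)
The statement claims that $w$-decomposability of a symmetric space $X$ on $[0,1]$ is equivalent to the condition that, for some $C>0$ and $p \in [1,\infty]$, the norm estimate \eqref{decompForSym7} holds for every finite collection $x_1,\dots,x_n$ satisfying the \emph{special} support condition \eqref{inclForSupp}, namely ${\rm supp}\, x_i \subset \overline{M}_i$. The strategy is to show this special condition is equivalent to the condition appearing in Proposition~\ref{prop2} — i.e. the condition phrased in terms of \eqref{decompForSym} — and then quote Proposition~\ref{prop2}. The two observations recorded immediately before the theorem statement are exactly the two directions of this equivalence, so the proof is essentially a matter of assembling them carefully, tracking the loss of constants.

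**The easy direction.** Suppose $X$ satisfies the conclusion of Proposition~\ref{prop2}: \eqref{decompForSym7} holds (with some $C$, $p$) whenever \eqref{decompForSym} holds. I want to deduce that \eqref{decompForSym7} holds whenever \eqref{inclForSupp} holds. Given $x_1,\dots,x_n$ with ${\rm supp}\,x_i \subset \overline M_i$, I first note that each $\overline M_r$ is one of the original sets $M_{k(r)}$, and on $M_k$ the weight $w$ takes values in $[2^k, 2^{k+1})$. The sets $\overline M_1,\dots,\overline M_n$ correspond to $n$ distinct integers $k(1),\dots,k(n)$; after relabelling so that $k(1) < k(2) < \dots < k(n)$ (this is a permutation of indices, harmless for both sides of \eqref{decompForSym7} since the $\ell_p$ norm is permutation-invariant and we may rename the $x_i$ accordingly), the splitting into odd- and even-indexed subcollections each satisfies \eqref{decompForSym}: indeed for the odd indices, $\inf w({\rm supp}\, x_{2j-1}) \geq 2^{k(2j-1)}$ while $\sup w({\rm supp}\, x_{2j+1}) < 2^{k(2j+1)+1} \leq 2^{k(2j-1)}$ because $k(2j+1) \geq k(2j-1)+2$. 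Apply the Proposition's conclusion to each subcollection, then recombine: writing $S_{\rm odd} = \sum_{j} x_{2j-1}$, $S_{\rm even} = \sum_j x_{2j}$, we have $\|S_{\rm odd}\|_X \approx (\sum_{\rm odd}\|x_i\|_X^p)^{1/p}$ and similarly for the even part, and then $\|\sum_i x_i\|_X$ is comparable to $\|S_{\rm odd}\|_X + \|S_{\rm even}\|_X$ (up to a factor $2$, using the triangle inequality one way and the ideal/lattice structure the other — the $x_i$ being disjointly supported, $|S_{\rm odd}| \leq |\sum_i x_i|$ and likewise for the even part), which is comparable to $(\sum_i \|x_i\|_X^p)^{1/p}$. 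This yields \eqref{decompForSym7} with a worse constant depending only on $C$ and $p$.

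**The converse direction.** Suppose \eqref{decompForSym7} holds whenever \eqref{inclForSupp} holds. I want the conclusion of Proposition~\ref{prop2}: \eqref{decompForSym7} whenever \eqref{decompForSym} holds. So let $x_1,\dots,x_n$ satisfy \eqref{decompForSym}. By the first observation before the theorem, ${\rm card}\{i : M_k \cap {\rm supp}\,x_i \neq \emptyset\} \leq 1$ for each $k \in \mathbb Z$. This is \emph{not} quite \eqref{inclForSupp} — a single $x_i$ may still straddle several consecutive $M_k$. The remedy is to decompose each $x_i$ along the level sets of $w$: write $x_i = \sum_{k} x_i \chi_{M_k}$. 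Since the $M_k$ partition $[0,1]$ and $X$ has the ideal property, this is a disjoint decomposition of $x_i$ into pieces each supported in a single $M_k$. Reindexing the resulting (finitely or countably many nonzero) pieces against the enumeration $\overline M_r$, I obtain a collection satisfying \eqref{inclForSupp}, apply the hypothesis, and then reassemble. Two technical points must be handled: (i) the collection may be countably infinite, which is dealt with by truncation plus the Fatou property — or more simply by first reducing $x_i$ to $x_i^* $ or to bounded functions of bounded support so only finitely many $M_k$ are met; (ii) passing from $\|\sum_i x_i\|_X \approx (\sum_{i,k}\|x_i\chi_{M_k}\|_X^p)^{1/p}$ back to $(\sum_i \|x_i\|_X^p)^{1/p}$ requires knowing $\|x_i\|_X \approx (\sum_k \|x_i\chi_{M_k}\|_X^p)^{1/p}$ — but this is again an instance of \eqref{decompForSym7} under \eqref{inclForSupp} applied to the single index $i$'s pieces (after the odd/even trick of the first observation, since the pieces of a \emph{single} $x_i$ occupy \emph{consecutive} $M_k$'s and hence, grouped by parity, satisfy \eqref{decompForSym}, landing us in the already-proved easy direction for that subproblem). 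Assembling, \eqref{decompForSym7} follows for the original $x_1,\dots,x_n$, and Proposition~\ref{prop2} then gives $w$-decomposability of $X$.

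**Main obstacle.** The genuinely delicate point is the converse: controlling the passage between "each $M_k$ meets at most one ${\rm supp}\,x_i$" (what \eqref{decompForSym} gives) and "each ${\rm supp}\,x_i$ sits in a single $M_k = \overline M_r$" (what \eqref{inclForSupp} demands), because a single $x_i$ can spread across many consecutive level sets of $w$. One must chop the $x_i$ by level sets, re-enumerate, apply the hypothesis to the refined collection, and then re-glue — and the re-gluing step needs the \emph{same} estimate \eqref{decompForSym7} applied internally to the chopped-up pieces of each individual $x_i$ in order to recover $\|x_i\|_X$ from the $\ell_p$-sum of the norms of its pieces. Keeping the exponent $p$ and the constant uniform through this double application, together with a Fatou-property argument to legitimize countable decompositions, is where the real work lies; everything else is the bookkeeping already laid out in the two observations preceding the theorem.
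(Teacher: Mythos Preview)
Your approach is exactly the one the paper has in mind: the paper's proof is the single line ``Thus, by Proposition~\ref{prop2}, we obtain the following result,'' and the two observations you build on are precisely the two paragraphs preceding the theorem. Two cosmetic corrections are worth noting. First, in your easy direction the inequality is reversed: condition~\eqref{decompForSym} forces the $w$-level to \emph{decrease} along the sequence, so after ordering $k(1)<\dots<k(n)$ the odd and even subcollections satisfy~\eqref{decompForSym} only in reverse order --- harmless since both sides of~\eqref{decompForSym7} are permutation-invariant (the paper's own phrasing has the same ambiguity). Second, in your converse direction the parenthetical about ``consecutive $M_k$'s'' and the odd/even trick is unnecessary and slightly circular: the pieces $x_i\chi_{M_k}$ of a single $x_i$ are each supported in a single $M_k$ and hence, after reindexing along the enumeration $(\overline{M}_r)$ with zeros in the gaps, satisfy~\eqref{inclForSupp} directly, so the re-gluing estimate $\|x_i\|_X \approx (\sum_k \|x_i\chi_{M_k}\|_X^p)^{1/p}$ is a straight application of hypothesis~B, not a detour through~\eqref{decompForSym}.
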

%%%%%%%%%%%%%%%%%%%%%%%%%%%

Next, we will need some corollaries of Theorem \ref{thm 1}. Firstly, using the symmetry of the norm in $X$, we get

%%%%%%%%%%%%%%%%%%%%%%%%%%% Corollary 3.4
\begin{corollary}\label{cor simplifies thm 3.3}
Let $w$ be a non-trivial weight on $[0,1]$. A symmetric space
$X$ on $[0,1]$ is $w$-decomposable if and only if there exist $C >0$
and $1\le p\le\infty$ such that for any $n\in\mathbb N$ and
for all pairwise disjoint $x_1, x_2, \dots, x_n\in X$ satisfying the condition
\begin{equation}\label{inclForSupp}
m({\rm supp}\, x_i)\leq w_i,\;\;1\le i\le n,
\end{equation}
we have \eqref{decompForSym7}.
\end{corollary}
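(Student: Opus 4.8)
The plan is to derive Corollary~\ref{cor simplifies thm 3.3} from Theorem~\ref{thm 1} by showing that the two defining conditions are equivalent. One direction is essentially trivial: if $X$ satisfies the condition of Corollary~\ref{cor simplifies thm 3.3} with parameters $C$ and $p$, then given $x_1,\dots,x_n$ with ${\rm supp}\,x_i\subset\overline{M}_i$, the functions are automatically pairwise disjoint (since the $\overline M_r$ are pairwise disjoint, being a rearrangement of the $M_k$) and $m({\rm supp}\,x_i)\le m(\overline M_i)=w_i$, so \eqref{decompForSym7} holds. Hence the condition of Theorem~\ref{thm 1} is satisfied and $X$ is $w$-decomposable.

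For the converse, suppose $X$ is $w$-decomposable, so by Theorem~\ref{thm 1} there exist $C>0$ and $1\le p\le\infty$ for which \eqref{decompForSym7} holds whenever ${\rm supp}\,x_i\subset\overline M_i$. Let now $x_1,\dots,x_n$ be arbitrary pairwise disjoint elements of $X$ with $m({\rm supp}\,x_i)\le w_i$. The idea is: since $m({\rm supp}\,x_i)\le w_i = m(\overline M_i)$, and each $\overline M_i$ is a measurable subset of $[0,1]$, we can find a measure-preserving identification of ${\rm supp}\,x_i$ with a subset of $\overline M_i$; more precisely, by standard measure-theoretic facts on $[0,1]$ with Lebesgue measure there is an equimeasurable copy $\tilde x_i$ of $x_i$ with ${\rm supp}\,\tilde x_i\subset\overline M_i$. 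Because $X$ is symmetric, $\|\tilde x_i\|_X=\|x_i\|_X$ for each $i$ and, crucially, $\sum_{i=1}^n x_i$ and $\sum_{i=1}^n\tilde x_i$ are equimeasurable as well (here the pairwise disjointness of the $x_i$ and of the $\tilde x_i$, the latter following from disjointness of the $\overline M_i$, is exactly what makes the distribution function of the sum equal the sum of the distribution functions). Hence $\|\sum x_i\|_X=\|\sum\tilde x_i\|_X$, and applying Theorem~\ref{thm 1} to $\tilde x_1,\dots,\tilde x_n$ gives
$$
\Big\|\sum_{i=1}^n x_i\Big\|_X=\Big\|\sum_{i=1}^n\tilde x_i\Big\|_X\ \stackrel{C}{\approx}\ \Big(\sum_{i=1}^n\|\tilde x_i\|_X^p\Big)^{1/p}=\Big(\sum_{i=1}^n\|x_i\|_X^p\Big)^{1/p},
$$
which is \eqref{decompForSym7} for the original $x_i$. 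This proves the condition of the corollary with the same $C$ and $p$.

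The only slightly delicate point — and the place I would be most careful — is the construction of the equimeasurable copies $\tilde x_i$ supported in $\overline M_i$ and the verification that the map respects disjoint sums. Concretely: for a measurable $E\subset[0,1]$ with $0<m(E)\le m(\overline M_i)$, one builds a measure-preserving bijection (mod null sets) from $E$ onto a subset of $\overline M_i$ via the generalized inverses of the distribution functions $t\mapsto m(E\cap[0,t])$ and $t\mapsto m(\overline M_i\cap[0,t])$, and transports $x_i$ through it. Since the $\overline M_i$ are mutually disjoint, the resulting $\tilde x_i$ are mutually disjoint, so for every $\lambda>0$ one has $d_{\sum\tilde x_i}(\lambda)=\sum_i d_{\tilde x_i}(\lambda)=\sum_i d_{x_i}(\lambda)=d_{\sum x_i}(\lambda)$, using disjointness of the original $x_i$ in the last step. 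This is routine but worth stating cleanly. The rest of the argument is just bookkeeping with the symmetry of $X$ and the statement of Theorem~\ref{thm 1}. No new idea beyond symmetry of the norm is needed; the corollary is genuinely a reformulation that strips the labelling of the $\overline M_r$ down to the sizes $w_r$ alone.
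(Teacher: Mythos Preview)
Your proof is correct and follows essentially the same approach as the paper, which derives the corollary from Theorem~\ref{thm 1} ``using the symmetry of the norm in $X$''. You have simply spelled out in detail what the paper leaves implicit: constructing equimeasurable copies $\tilde x_i$ supported in $\overline M_i$ and noting that disjointness on both sides makes the sums equimeasurable as well.
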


%%%%%%%%%%%%%%%%%%%%%%%%%%%%%%%% Corollary 1=3.5
\begin{corollary}\label{cor1}
A symmetric space $X$ on $[0,1]$ is $w$-decomposable for some non-trivial weight $w$ 
on $[0,1]$ if and only if there exist $C>0$, $1\leq p\leq \infty$, and a sequence
of disjoint intervals $\{\Delta_k\}_{k=1}^\infty$ from $[0,1]$ such
that for any $n\in\mathbb N$ and for all $x_1, x_2, \dots, x_n\in X$ satisfying
the condition ${\rm supp}\, x_i\subset \Delta_i$ $(1\leq i\leq n)$ we have 
\eqref{decompForSym7}.
\end{corollary}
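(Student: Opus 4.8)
The plan is to obtain the two directions directly from Corollary~\ref{cor simplifies thm 3.3} (forward direction) and from Theorem~\ref{thm 1} (converse); the only real work is to pick the right intervals in one direction and the right weight in the other, together with the observation that both sides of \eqref{decompForSym7} are symmetric under permutations of $x_1,\dots,x_n$.

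For the forward direction I would start from a non-trivial weight $w$ for which $X$ is $w$-decomposable and apply Corollary~\ref{cor simplifies thm 3.3}: with the notation $M_k$ and $(w_r)$ of the preceding pages, this gives $C>0$ and $p\in[1,\infty]$ such that \eqref{decompForSym7} holds whenever $x_1,\dots,x_n\in X$ are pairwise disjoint and $m({\rm supp}\,x_i)\le w_i$. Since $w$ is positive and finite a.e., $\bigcup_k M_k=[0,1]$ up to a null set, hence $\sum_{r=1}^{\infty}w_r=1$. Setting $s_0=0$, $s_k=w_1+\dots+w_k$ and $\Delta_k=[s_{k-1},s_k)$, the intervals $\Delta_k$ are disjoint, cover $[0,1]$, and satisfy $m(\Delta_k)=w_k$; and whenever ${\rm supp}\,x_i\subset\Delta_i$ $(1\le i\le n)$ the functions $x_i$ are pairwise disjoint with $m({\rm supp}\,x_i)\le w_i$, so \eqref{decompForSym7} holds for them. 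This supplies the required $C$, $p$ and $\{\Delta_k\}_{k=1}^{\infty}$.

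For the converse I am handed $C>0$, $p\in[1,\infty]$ and disjoint intervals $\{\Delta_k\}_{k=1}^{\infty}$ in $[0,1]$ with the stated property; as in the forward direction, the relevant situation is the one in which these intervals cover $[0,1]$ up to a null set, and since \eqref{decompForSym7} is symmetric in $x_1,\dots,x_n$ I may relabel the $\Delta_k$ so that $m(\Delta_1)\ge m(\Delta_2)\ge\cdots$. I would then define the weight $w$ by $w(t)=2^{-k}$ for $t\in\Delta_k$: this $w$ is measurable, takes values in $(0,1/2]$, has $1/w$ unbounded, hence is a non-trivial weight, and its level sets are exactly $M_{-k}=\Delta_k$ (the other $M_j$ being empty). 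Consequently the non-increasing rearrangement of $(m(M_j))_j$ equals $(m(\Delta_k))_k$, and one may take $\overline{M}_r=\Delta_r$ in Theorem~\ref{thm 1}. For this weight, Theorem~\ref{thm 1} states that $X$ is $w$-decomposable if and only if there exist $C'>0$ and $p'\in[1,\infty]$ with \eqref{decompForSym7} valid for all $x_1,\dots,x_n$ satisfying ${\rm supp}\,x_i\subset\Delta_i$; this is exactly our hypothesis (with $C'=C$, $p'=p$), so $X$ is $w$-decomposable.

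The whole argument rests on Theorem~\ref{thm 1} and Corollary~\ref{cor simplifies thm 3.3}, so no step is genuinely hard; the only point needing care is that the intervals $\Delta_k$ must exhaust $[0,1]$, i.e. $\sum_k m(\Delta_k)=1$. Indeed, the weight $w=\sum_k 2^{-k}\chi_{\Delta_k}$ must be positive and finite on all of $[0,1]$, and then Corollary~\ref{cor simplifies thm 3.3} applied to $w$ tests configurations whose supports have total measure up to $1$; this is why the converse is carried out with a family of intervals covering $[0,1]$, which is precisely the kind of family produced by the forward direction.
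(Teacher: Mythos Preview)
Your forward direction is fine. The converse has a genuine gap: you assume the disjoint intervals $\{\Delta_k\}$ cover $[0,1]$, but nothing in the hypothesis says this, and your last paragraph (``the converse is carried out with a family of intervals covering $[0,1]$, which is precisely the kind of family produced by the forward direction'') confuses the logical structure of an equivalence. For the converse you are handed \emph{arbitrary} disjoint intervals satisfying the stated property and must produce a weight; you cannot impose $\sum_k m(\Delta_k)=1$. In fact the paper later uses this corollary (in the proof of Theorem~\ref{regularly var and Orlicz}) with intervals satisfying $m(\Delta_k)\le 2^{-k-1}/F(v_k)$, which certainly need not exhaust $[0,1]$.

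The repair is short. After relabelling so that $m(\Delta_1)\ge m(\Delta_2)\ge\cdots$, set $\Delta_0:=[0,1]\setminus\bigcup_{k\ge1}\Delta_k$ and define $w$ by $w\equiv 2$ on $\Delta_0$ and $w\equiv 2^{-k}$ on $\Delta_k$ for $k\ge1$; this is a non-trivial weight on all of $[0,1]$. Its level sets are $M_1=\Delta_0$ and $M_{-k}=\Delta_k$, so in Theorem~\ref{thm 1} the family $\{\overline{M}_r\}$ is just $\{\Delta_0,\Delta_1,\Delta_2,\dots\}$ listed by decreasing measure. Given $x_1,\dots,x_n$ with ${\rm supp}\,x_i\subset\overline{M}_i$, exactly one index $j$ (possibly none) has $\overline{M}_j=\Delta_0$; the remaining $x_i$ have supports in $\Delta_1,\dots,\Delta_{n-1}$ in order, so by hypothesis $\|\sum_{i\ne j}x_i\|_X\stackrel{C}{\approx}(\sum_{i\ne j}\|x_i\|_X^p)^{1/p}$. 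Since the $x_i$ are pairwise disjoint, the lattice property and the triangle inequality give
\[
\max\Bigl(\|x_j\|_X,\Bigl\|\sum_{i\ne j}x_i\Bigr\|_X\Bigr)\le\Bigl\|\sum_{i=1}^n x_i\Bigr\|_X\le \|x_j\|_X+\Bigl\|\sum_{i\ne j}x_i\Bigr\|_X,
\]
which yields \eqref{decompForSym7} with constant at most $2C$. Theorem~\ref{thm 1} then gives $w$-decomposability. This is the missing step.
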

%%%%%%%%%%%%%%%%%%%%%%%%%%%

%%%%%%%%%%%%%%%%%%%%%%%%%%%%%%%% Corollary 1=new 3.6
\begin{corollary}\label{cor1new}
Let $w$ be a non-trivial weight on $[0,1]$ and let the sequence  $(w_r)_{r=1}^{\infty}$ 
be as above. Suppose that $X$ is a $w$-decomposable symmetric space $X$ on $[0,1]$ 
with the fundamental function $\varphi$. Then there exist some $C > 0$ and $p \in [1, \infty]$ such 
that, for every sequence of reals $(\tau_r)_{r=1}^{\infty}$ satisfying $0<\tau_r\le w_r$ $(r \in \mathbb N)$,
we have 
\begin{equation}\label{condOnFundFunc}
\varphi\bigl(\sum_{r=1}^{\infty}\tau_r\bigr) \stackrel{C}{\approx}
\left(\sum_{r=1}^{\infty}\varphi^p(\tau_r)\right)^{1/p}
\end{equation}
with the natural modification for $p=\infty$.
\end{corollary}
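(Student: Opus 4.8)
The plan is to deduce \eqref{condOnFundFunc} from Corollary \ref{cor simplifies thm 3.3} by testing the latter on indicator functions of disjoint sets of prescribed measure and then letting the number of sets go to infinity. Let $C>0$ and $p\in[1,\infty]$ be the constant and exponent supplied by Corollary \ref{cor simplifies thm 3.3} for the $w$-decomposable symmetric space $X$; these will be the very $C$ and $p$ appearing in \eqref{condOnFundFunc}. Fix a sequence $(\tau_r)_{r=1}^\infty$ with $0<\tau_r\le w_r$. Because the sets $M_k$ partition $[0,1]$ up to a set of measure zero, $\sum_{r=1}^\infty w_r=\sum_{k\in\mathbb Z}m(M_k)=m([0,1])=1$, and hence $s:=\sum_{r=1}^\infty\tau_r\le 1$. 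In particular one can choose pairwise disjoint measurable sets $A_r\subset[0,1]$ (for instance, consecutive subintervals of $[0,s]$) with $m(A_r)=\tau_r$ for all $r$.

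Next I would put $x_r:=\chi_{A_r}$. Since $X$ is symmetric, $x_r\in X$ and $\|x_r\|_X=\varphi(m(A_r))=\varphi(\tau_r)$, while for each fixed $n$ the functions $x_1,\dots,x_n$ are pairwise disjoint and satisfy $m({\rm supp}\,x_i)=\tau_i\le w_i$, which is precisely the hypothesis of Corollary \ref{cor simplifies thm 3.3}. Since $\sum_{i=1}^n x_i=\chi_{A_1\cup\cdots\cup A_n}$ and $m(A_1\cup\cdots\cup A_n)=\sum_{i=1}^n\tau_i$, that corollary yields
\[
\varphi\Bigl(\sum_{i=1}^{n}\tau_i\Bigr)\stackrel{C}{\approx}\Bigl(\sum_{i=1}^{n}\varphi^p(\tau_i)\Bigr)^{1/p}
\]
for every $n\in\mathbb N$ with one and the same constant $C$ (the right-hand side being $\max_{1\le i\le n}\varphi(\tau_i)$ when $p=\infty$).

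Finally I would let $n\to\infty$ in this estimate. On the left, the partial sums $\sum_{i=1}^n\tau_i$ increase to $s$; since $s\ge\tau_1>0$ lies in $(0,1]$, and every quasi-concave function $\varphi$ with $\varphi(0)=0$ is continuous there — this follows at once from $\varphi$ being non-decreasing together with $t\mapsto\varphi(t)/t$ being non-increasing — we get $\varphi(\sum_{i=1}^n\tau_i)\to\varphi(s)$. On the right, $\sum_{i=1}^n\varphi^p(\tau_i)$ increases to $\sum_{r=1}^\infty\varphi^p(\tau_r)$ (resp. $\max_{1\le i\le n}\varphi(\tau_i)\to\sup_r\varphi(\tau_r)$ for $p=\infty$), a limit which is automatically finite and positive because the displayed estimate keeps it between $C^{-1}\varphi(s)$ and $C\varphi(s)$. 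As non-strict inequalities are preserved under limits, passing to the limit gives exactly \eqref{condOnFundFunc} with the same $C$ and $p$. The only slightly delicate point in the whole argument is this last passage to the limit on the left: it needs continuity of $\varphi$ at $s=\sum_r\tau_r$, which is why it is essential that $s$ is bounded away from $0$ and does not exceed $1$ — precisely the range where a quasi-concave function is automatically continuous — so that no additional assumption on $\varphi$ (such as concavity) is required. Everything else is a routine application of Corollary \ref{cor simplifies thm 3.3}.
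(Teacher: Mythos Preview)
Your argument is correct and is exactly the natural route the paper has in mind: the corollary is stated without proof precisely because it follows by applying Corollary~\ref{cor simplifies thm 3.3} to characteristic functions of disjoint sets of measures $\tau_1,\dots,\tau_n$ and then letting $n\to\infty$, using continuity of the quasi-concave fundamental function on $(0,1]$. Your handling of the limiting step and the observation that $\sum_r w_r=1$ forces $\sum_r\tau_r\in(0,1]$ are both fine.
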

%%%%%%%%%%%%%%%%%%%%%%%%%%%

%%%%%%%%%%%%%%%%%%%%%%%%%%%%%%%% Corollary 2=3.7
\begin{corollary}\label{cor2}
Let $w$ be a non-trivial weight on $[0,1]$ such that a symmetric
space $X$ on $[0,1]$ is $w$-decomposable. Then there exist $C>0$ and
$1\leq p\leq\infty$ such that condition \eqref{regVarEquivalent} is
fulfilled with $\tau(N)=w_N$ $(N\in\mathbb N)$. In particular, the
fundamental function $\varphi$ of $X$ is equivalent to a regularly
varying function at zero of order $p$ and
$\alpha_X=\gamma_\varphi=\delta_\varphi=\beta_X=1/p.$
\end{corollary}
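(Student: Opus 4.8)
The plan is to derive Corollary~\ref{cor2} from Corollary~\ref{cor1new}, which already supplies the constant $C>0$ and the exponent $p\in[1,\infty]$ together with the "almost additivity" relation \eqref{condOnFundFunc} for $\varphi^p$ along all sequences $(\tau_r)$ with $0<\tau_r\le w_r$. First I would fix $N\in\mathbb N$ and simply specialize \eqref{condOnFundFunc} to the finite sequence in which $\tau_1=\tau_2=\cdots=\tau_N=t$ and $\tau_r=0$ for $r>N$, where $0<t\le w_N$; since $(w_r)$ is non-increasing this choice satisfies the admissibility constraint $0<\tau_r\le w_r$ for each $r\le N$. Then \eqref{condOnFundFunc} reads $\varphi(Nt)\stackrel{C}{\approx}(N\varphi^p(t))^{1/p}=N^{1/p}\varphi(t)$ (with the obvious reading $\varphi(Nt)\approx\varphi(t)$ when $p=\infty$), valid for all $0<t\le w_N$ with $Nt\le 1$. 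This is exactly condition \eqref{regVarEquivalent} with $\tau(N)=w_N$, so the first assertion of the corollary is established with the same $C$ and $p$ as in Corollary~\ref{cor1new}.

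For the "in particular" part, I would invoke Lemma~\ref{lemma2}: the validity of \eqref{regVarEquivalent} for every $N$ with some threshold $\tau(N)\in(0,1]$ is precisely the criterion there for $\varphi$ to be equivalent to a quasi-concave function regularly varying at zero of order $p$. (The fact that our thresholds are $w_N$, which may not tend to $0$, is harmless: if $w_N$ does not go to $0$ one may always shrink each $\tau(N)$, and more to the point the hypothesis of Lemma~\ref{lemma2} only asks for existence of \emph{some} $\tau(N)\in(0,1]$, which we have produced.) It remains to compute the indices. From $\varphi(Nt)\stackrel{C}{\approx}N^{1/p}\varphi(t)$ for small $t$ one gets, for each fixed dilation factor $u=N$, that $\bar\varphi(u):=\sup_{s,su\in I}\varphi(su)/\varphi(s)$ satisfies $\bar\varphi(N)\stackrel{C}{\approx}N^{1/p}$ along integers $N$; by quasi-concavity $\bar\varphi$ is submultiplicative and monotone, so interpolating between consecutive integers yields $\bar\varphi(t)\approx t^{1/p}$ for all $t$ in the relevant range, whence $\gamma_\varphi=\lim_{t\to0^+}\ln\bar\varphi(t)/\ln t=1/p$ and likewise $\delta_\varphi=1/p$. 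Since always $0\le\alpha_X\le\gamma_\varphi\le\delta_\varphi\le\beta_X\le1$, and one can also read off $\alpha_X=\beta_X=1/p$ directly because the Krivine exponent $p$ in the proof of Proposition~\ref{prop2} was chosen in $[1/\beta_X,1/\alpha_X]$, forcing $1/\beta_X\le p\le1/\alpha_X$, i.e. $\alpha_X\le1/p\le\beta_X$; combined with the squeeze this gives $\alpha_X=\gamma_\varphi=\delta_\varphi=\beta_X=1/p$.

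I expect the only genuinely delicate point to be the passage from the discrete relation $\bar\varphi(N)\approx N^{1/p}$ (over integers $N$, and only for sufficiently small arguments of $\varphi$) to the full statement $\alpha_X=\beta_X=1/p$ about the Boyd indices, which are defined through the dilation operators $\sigma_t$ on $X$ rather than through $\varphi$ alone. The clean way around this is to not argue via $\sigma_t$ at all but to use instead the chain $\alpha_X\le\gamma_X\le\delta_X\le\beta_X$ together with the two-sided bound $1/\beta_X\le p\le1/\alpha_X$ coming from Krivine's theorem as recorded in the proof of Proposition~\ref{prop2}; then $\gamma_\varphi=\delta_\varphi=1/p$ from Lemma~\ref{lemma2} pinches everything. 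I would therefore present the indices computation in that order: first $\gamma_\varphi=\delta_\varphi=1/p$ from the regular variation, then $\alpha_X=\beta_X=1/p$ by the Krivine sandwich and the inequalities $0\le\alpha_X\le\gamma_X=\gamma_\varphi\le\delta_\varphi=\delta_X\le\beta_X\le1$.
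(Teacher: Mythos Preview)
Your derivation of \eqref{regVarEquivalent} from \eqref{condOnFundFunc} by specializing to $\tau_1=\cdots=\tau_N=t$ is fine (the requirement $\tau_r>0$ for $r>N$ is handled by a limiting argument or by noting that the underlying Corollary~\ref{cor1new} already applies to finite collections), and the appeal to Lemma~\ref{lemma2} for the regular-variation conclusion is correct. This matches the paper's first sentence.

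The gap is in your ``squeeze'' for the Boyd indices. From the proof of Proposition~\ref{prop2} you only know that the Krivine exponent $p$ lies in $[1/\beta_X,1/\alpha_X]$, i.e.\ $\alpha_X\le 1/p\le\beta_X$. Combined with the general chain $\alpha_X\le\gamma_\varphi\le\delta_\varphi\le\beta_X$ and your computation $\gamma_\varphi=\delta_\varphi=1/p$, you obtain\ldots\ $\alpha_X\le 1/p\le\beta_X$ again. Nothing here forces $\alpha_X=\beta_X$; the configuration $\alpha_X<1/p<\beta_X$ is perfectly compatible with all the inequalities you have written down. So the final equality $\alpha_X=\beta_X=1/p$ is not established.

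The paper closes this gap with an extra ingredient you have not used: Krivine's theorem for symmetric spaces actually holds at \emph{both} endpoints $p=1/\alpha_X$ and $p=1/\beta_X$ (see \cite[p.~141]{LT79}, \cite{Ro78}, \cite{As11}), not merely for some $p$ in the interval. Running the proof of Proposition~\ref{prop2} once with $p=1/\alpha_X$ and once with $p=1/\beta_X$ shows that \eqref{decompForSym7}, and hence \eqref{regVarEquivalent}, holds for each of these two values with a uniform constant. But \eqref{regVarEquivalent} determines $p$ uniquely (let $N\to\infty$), so $1/\alpha_X=1/\beta_X$. Now the chain $\alpha_X\le\gamma_\varphi\le\delta_\varphi\le\beta_X=\alpha_X$ really does collapse.
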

%%%%%%%%%%%%%%%%%%%%%%%%

%%%%%%
\begin{proof}
First we note that condition \eqref{regVarEquivalent} is an immediate
consequence of \eqref{condOnFundFunc}. Moreover, it is well known
that the assertion of Krivine's theorem holds for both $p=1/\alpha_X$ 
and $p=1/\beta_X$ (see \cite[p. 141]{LT79}, \cite{Ro78} and \cite{As11}). 
Therefore, coincidence of the Boyd indices and dilation indices follows 
from an inspection of the proof of Proposition \ref{prop2} and the
inequalities $\alpha_X \leq \gamma_\varphi\leq \delta_\varphi \leq
\beta_X$ (cf. \cite[p. 102]{KPS82} and \cite[p. 28]{Ma85}).
\end{proof}

Let us show that, conversely, \eqref{condOnFundFunc} can be derived
from \eqref{regVarEquivalent} with $\tau(N)=w_N$ for a large class
of weights $w$.

%%%%%%%%%%%%%%%%%%%%%%%%%%%%%%%% THEOREM 2=3.8
\begin{theorem}\label{ExponentM}
Let $w$ be a weight on $[0, 1]$ such
that $q w_{r+1}\le w_r$ $(r=1, 2, \dots)$ for some $q>1$ and let $\varphi$ be a
quasi--concave function on $[0,1].$
Suppose there exist $C>0$ and $1\leq p \leq\infty$ such that
$\varphi$ satisfies \eqref{regVarEquivalent} with $\tau(N)=w_N$
($N=1,2,\dots).$ Then, for any sequence of reals
$(\tau_r)_{r=1}^{\infty}$ such that $0<\tau_r\le w_r$
$(r=1,2,\dots)$, estimate \eqref{condOnFundFunc} holds.
\end{theorem}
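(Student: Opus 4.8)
\textit{Proof strategy.} The plan is to prove \eqref{condOnFundFunc} first for the partial sums $\sum_{r=1}^{n}\tau_r$ with constants independent of $n$, and then let $n\to\infty$, using that $\varphi$ is non-decreasing and, being quasi--concave, continuous on $(0,1]$.

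I begin with three reductions. First, both sides of \eqref{condOnFundFunc} are invariant under permutations of a finite block $\tau_1,\dots,\tau_n$, and its non-increasing rearrangement $(\tau^{*}_r)$ still satisfies $\tau^{*}_r\le w_r$: if $\tau^{*}_r\ge\lambda$, then at least $r$ of the $\tau_i$ are $\ge\lambda$, so one of them has index $\ge r$, whence $w_r\ge\tau^{*}_r$ because $(w_r)$ is non-increasing. Thus we may assume $\tau_1\ge\tau_2\ge\cdots$. Second, writing $T_k=\sum_{i\le k}\tau_i$ and $T=\sum_r\tau_r$, the sets $M_k$ being disjoint in $[0,1]$ gives $T\le\sum_r m(M_r)\le 1$, and $k\tau_k\le T_k\le T\le 1$ for every $k$. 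Third, by Lemma \ref{lemma2} the hypothesis means that $\varphi$ is equivalent to a quasi--concave function regularly varying at $0$ of order $p$; such a function has $\gamma_\varphi=\delta_\varphi=1/p$ (cf. the argument in the proof of Corollary \ref{cor2}), so after replacing $\varphi$ by an equivalent concave function (which only changes the constant in \eqref{condOnFundFunc}) we may assume $\varphi$ concave and use that for every $\varepsilon>0$ there are $C_\varepsilon,t_\varepsilon>0$ with $\varphi(ut)\le C_\varepsilon u^{1/p-\varepsilon}\varphi(t)$ for $0<u\le1,\ 0<t\le t_\varepsilon$. Note finally that, since $\tau_k\le w_k$, the hypothesis yields for each $k$ the convenient form
$$
\varphi(N\tau_k)\ \stackrel{C}{\approx}\ N^{1/p}\varphi(\tau_k)\qquad\text{whenever }N\le k\text{ and }N\tau_k\le 1. \tag{$\ast$}
$$

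\textit{The estimate $\big(\sum_r\varphi^{p}(\tau_r)\big)^{1/p}\le C\varphi(T)$.} Split $\{1,2,\dots\}$ into $G=\{k:k\tau_k\ge T\}$ and its complement. If $k\in G$, put $m_k=\lceil T/\tau_k\rceil\le k$; then $T\le m_k\tau_k<2T$, so $(\ast)$ together with concavity (and a harmless adjustment near $t=1$) gives $\varphi(\tau_k)\le C'm_k^{-1/p}\varphi(T)$, and the elementary inequality $m_k^{-1}\le\tau_k/T$ yields $\sum_{k\in G}\varphi^{p}(\tau_k)\le (C')^{p}\varphi^{p}(T)\sum_k\tau_k/T=(C')^{p}\varphi^{p}(T)$. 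If $k\notin G$, then $(\ast)$ with $N=k$ gives $\varphi(\tau_k)\le Ck^{-1/p}\varphi(k\tau_k)$, and substituting $\varphi(k\tau_k)\le C_\varepsilon(k\tau_k/T)^{1/p-\varepsilon}\varphi(T)$ we get $\varphi^{p}(\tau_k)\le CC_\varepsilon^{p}\,k^{-p\varepsilon}(\tau_k/T)^{1-p\varepsilon}\varphi^{p}(T)$. Here the hypothesis $qw_{r+1}\le w_r$ enters decisively: it forces $\tau_k\le w_k\le w_1q^{1-k}$, so the indices $k\notin G$ with $k\tau_k$ in a given dyadic range around $T2^{-j}$ are confined to $k=O(j)$ in number, while the tail is geometrically small; combining this with $\sum_k\tau_k/T\le1$ and Hölder's inequality one sums over the dyadic scales and obtains $\sum_{k\notin G}\varphi^{p}(\tau_k)\le C''\varphi^{p}(T)$ with $C''$ independent of $n$. (Achieving this uniformity — i.e. avoiding a logarithmic loss in $1/T$ or in $n$ — is the delicate point, and it is precisely the geometric decay of $(w_r)$ that makes it possible.)

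\textit{The estimate $\varphi(T)\le C\big(\sum_r\varphi^{p}(\tau_r)\big)^{1/p}$.} This is proved dually. Writing $\delta_k=\tau_k-\tau_{k+1}\ge0$ one has $T=\sum_k k\delta_k$ with $\delta_k\le w_k$ and $k\delta_k\le T\le1$, and, $\varphi$ being concave with $\varphi(0)=0$, hence subadditive, $\varphi(T)\le\sum_k\varphi(k\delta_k)\le C\sum_k k^{1/p}\varphi(\delta_k)$ by $(\ast)$. One then groups the terms by the dyadic size of $\delta_k$, bounds each group by the hypothesis together with $\delta_k\le\tau_k$, and uses the geometric decay $qw_{r+1}\le w_r$ exactly as above — or, equivalently, one runs a strong induction on $n$, splitting $\{1,\dots,n\}$ at $L=\#\{k:w_k>T\}$: the "head'' $\{1,\dots,L\}$ is a shorter instance handled by the inductive hypothesis, while on the "tail'' $w_k\le T$ and $\tau_k\le w_k\le Tq^{L+1-k}$, so the dilation index estimate for $\varphi$ makes the tail contribution a constant (depending only on $q,p,\varepsilon$) times $\varphi^{p}(T)$. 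Choosing the induction constant large enough closes the estimate.

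\textit{The case $p=\infty$ and the passage to the limit.} For $p=\infty$ the hypothesis says $\varphi(Nt)\stackrel{C}{\approx}\varphi(t)$ for $t\le w_N$, i.e. $\varphi$ is slowly varying at the scales $w_N$; then $\varphi(\tau_1)\le\varphi(T)\le\varphi(L\tau_1)\le C_q\varphi(\tau_1)$ with $L=\#\{k:w_k\ge\tau_1\}$, and since $T\le M\tau_1$ with $M=O(L)$ a single application of the hypothesis (legitimate because $\tau_1\le w_L$) gives $\varphi(T)\stackrel{C}{\approx}\varphi(\tau_1)=\max_r\varphi(\tau_r)$, which is \eqref{condOnFundFunc}. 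Finally, in all cases the estimates obtained for $\sum_{r=1}^{n}\tau_r$ have constants independent of $n$, so letting $n\to\infty$ and using monotonicity and continuity of $\varphi$ yields \eqref{condOnFundFunc} for the full series. The main obstacle throughout is the uniformity of the constants, and it is the assumption $qw_{r+1}\le w_r$ that supplies the geometric bookkeeping needed to overcome it.
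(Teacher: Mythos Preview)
Your proposal has a genuine gap in the first estimate. You define $G=\{k:k\tau_k\ge T\}$ and then argue separately on $G$ and its complement. But you yourself observed two lines earlier that $k\tau_k\le T_k\le T$ for every $k$; hence $k\tau_k\ge T$ forces $k\tau_k=T$, which can only happen in the degenerate situation $\tau_1=\cdots=\tau_k$ and $\tau_r=0$ for $r>k$. In other words, $G$ is essentially empty, the split is vacuous, and the entire weight of the argument falls on the ``complement'' part. There your bound reads $\varphi^{p}(\tau_k)\le C\,k^{-p\varepsilon}(\tau_k/T)^{1-p\varepsilon}\varphi^{p}(T)$, and you appeal to H\"older and the geometric decay of $w_k$ to sum it. But H\"older with the natural exponents produces the divergent harmonic sum $\sum k^{-1}$, and your dyadic count (``the indices with $k\tau_k\sim T2^{-j}$ are $O(j)$ in number'') actually gives $O\bigl(j+\log_q(w_1/T)\bigr)$, introducing a factor $\log(1/T)$ that is \emph{not} uniform in the sequence $(\tau_r)$. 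You flag this uniformity as ``the delicate point,'' but you do not resolve it; the sketch as written loses a logarithm. The second estimate suffers from the same problem: after $\varphi(T)\le C\sum_k k^{1/p}\varphi(\delta_k)$ you need to pass to $(\sum_k\varphi^p(\tau_k))^{1/p}$, and neither the dyadic grouping nor the inductive split is carried out to the point where one sees that the constant closes.

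For comparison, the paper's argument avoids this obstacle by a bootstrapping step that your approach lacks. Starting from $\varphi(Nt)\approx N^{1/p}\varphi(t)$ for $t\le w_N$, the paper uses the geometric decay $qw_{r+1}\le w_r$ to iterate this to $\varphi(z^{m}t)\approx z^{m/p}\varphi(t)$ for all $z\in[1,N]$ and $t\le w_N$, with a fixed power $m=m_0$ chosen so that $\kappa m_0>1$ (where $\kappa$ comes from the dilation-index bound). This stronger pointwise equivalence is what lets one compare $\varphi(\tau_r)$ and $\varphi(\tau_{i_0})$ for \emph{all} indices in a set $I=\{r:\tau_r r^{m_0}\ge\tau_1\}$ simultaneously, with no logarithmic loss; the complement $J$ is handled trivially by the index bound since $\kappa m_0>1$ makes $\sum r^{-\kappa m_0}$ convergent. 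Your single-step use of $(\ast)$ with $N=k$ cannot replace this iteration.
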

%%%%%%%%%%%%%%%%%%%%%%%%%%%%%%%%%%%

%%%%%%%%%
\begin{proof}
We present the proof for $1\leq p<\infty$ since the case $p=\infty$ needs only minor changes.

Firstly, it is easy to see that condition \eqref{regVarEquivalent} can be extended
as follows: we can find a (possibly different) constant $ C > 0$
such that for every real $z \geq 1$ and $\tau(z):=\tau ([z])$ we have
\begin{equation} \label{equiv14}
\frac{\varphi(z t)}{\varphi(t)} ~ \stackrel{C}{\approx} ~ z^{1/p}\;\;\mbox{if} \;\; 0 < t \leq \tau(z).
\end{equation}

Let us show that for every $m\in\mathbb N$ there is a
constant $C(m) > 0$ such that for all even $N \in \mathbb N$ satisfying the 
inequality $N^m \leq q^{N/2}$ and all $z \in [1, N]$ we have
\begin{equation}\label{regVarEqImprove}
\frac{\varphi(z^m t)}{\varphi(t)} ~ \stackrel{C(m)}{\approx} ~ z^{m/p}\;\;\mbox{if } \;\;0 < t \leq\tau(N).
\end{equation}
In fact, by the assumption, $\tau(N/2) \geq q^{N/2} \tau(N)$, whence
$$z^{k} t \leq z^m t \leq N^m \tau(N) \leq q^{N/2} \tau(N) \leq \tau(N/2) \leq 1 ~(k = 0, 1, \ldots, m)$$
provided that $t \leq \tau(N)$. Therefore, using the quasi--concavity of $\varphi$
and equivalence \eqref{equiv14} for $\max(1,z/2)$ we obtain that
\begin{equation*}\label{regVarEqImprove2}
\frac{\varphi(z^k t)}{\varphi(z^{k-1}t)} \approx \frac{ \varphi(\max(1,z/2) z^{k-1} t)}{\varphi(z^{k-1} t)} 
\approx z^{1/p}\;\;\mbox{if }\;\;0<t \leq \tau(N),
\end{equation*}
with a constant of equivalence depending on $p.$ Multiplying these relations
for all $k=1, 2, \dots, m$, we come to \eqref{regVarEqImprove}.

Next, let
$$
\overline{\varphi}^0 (s) = \limsup_{t \rightarrow 0^+} \frac{\varphi(ts)}{\varphi(t)} ~ {\rm for } ~~ s > 0.
$$
Clearly, condition (\ref{equiv14}) implies $\overline{\varphi}^0 (s) \approx s^{1/p} (s > 0)$. 
On the other hand, in view of Boyd's result \cite{Bo71} (see also \cite[Theorem 2.2]{Ma85})
$\overline{\varphi}^0 (s) \geq s^{\gamma_{\varphi}}$ if $0 < s \leq 1$ and 
$\overline{\varphi}^0 (s) \geq s^{\delta_{\varphi}}$ if $ s > 1$.
Since $\gamma_{\varphi} \leq \delta_{\varphi}$ it follows that $\gamma_{\varphi} = \delta_{\varphi} = \frac{1}{p} > 0$. 
Therefore, there exist $A > 0$ and $\kappa>0$ such that
\begin{equation}\label{positive index}
\sup\limits_{0<s \le 1}\frac{\varphi(st)}{\varphi(s)}\le At^{\kappa} ~~{\rm for ~ all} ~~ 0\le t\le 1.
\end{equation}

Let us prove that \eqref{condOnFundFunc} is a consequence of \eqref{regVarEqImprove} and 
\eqref{positive index}. Take a natural number $m_0 \geq 2$ such that $\kappa \,m_0 > 1$ and 
consider an arbitrary sequence $(\tau_r)_{r=1}^{\infty}$ satisfying $\tau_r \leq w_r, r = 1, 2, \dots$.
Since the non-increasing rearrangement $(\tau^*_r)_{r=1}^{\infty}$ of this sequence also satisfies 
$\tau^*_r \leq w_r$ for $ r = 1, 2, \dots$ we can assume without loss of generality that the sequence 
$(\tau_r)_{r=1}^{\infty}$  is itself non-increasing. Further, set 
$I=\{r \in {\mathbb N}: \tau_r \, r^{m_0} \geq \tau_1 \}, J = {\mathbb N}\setminus I.$ 
Clearly, $1\in I$. By \eqref{positive index} and the choice of $m_0$,
$$
\varphi \Big(\sum_{r\in J}\tau_r \Big) \leq \varphi\Big(\sum_{r=2}^{\infty}\frac{\tau_1}{r^{m_0}}\Big) \leq
A\Big(\sum_{r=2}^{\infty}r^{-m_0} \Big)^{\kappa} \,\varphi(\tau_1)\leq C_1\varphi(\tau_1).
$$
Analogously,
$$
\sum_{r\in J}\varphi^p(\tau_r)\le\sum_{r=2}^{\infty} \varphi^p(\tau_1/r^{m_0})\le A^p\sum_{r=2}^\infty {r^{-p \,\kappa
\, m_0}}\varphi^p(\tau_1) \le C_2 \, \varphi^p(\tau_1).
$$
Thus, it is sufficient to prove equivalence \eqref{condOnFundFunc} for $(\tau_r)_{r\in I}.$

If ${\rm card} \,I < \infty$ then there is nothing to prove. So, assume that ${\rm card} \,I = \infty$.
Choose a positive integer $i_0 \in I$, $i_0\geq 2$ such that for $N=2\,[i_0/2]$ we have $N^{m_0} \leq q^{N/2}$.
Denote $\delta_r = (\tau_r/\tau_{i_0})^{1/m_0}$ for $r\in I \cap \{1, 2, \ldots, i_0\}$. Then,
by the definition of $I$, $\delta_r \leq (\tau_1/\tau_{i_0})^{1/m_0} \leq i_0\leq 2N$.
Applying \eqref{regVarEqImprove} in the case $m=m_0, z = \max(1,\delta_r/2)$ for all $r \in I, r \leq i_0$, we get
\vspace{-2mm}
$$
\frac{\varphi(\tau_r)}{\varphi(\tau_{i_0})} =\frac{\varphi(\delta_r^{m_0}\tau_{i_0})}{\varphi(\tau_{i_0})} ~
{\approx} ~ \delta_r^{m_0/p}=\left(\frac{\tau_r}{\tau_{i_0}}\right)^{1/p},
$$
with a constant of equivalence depending on $m_0$ and $p.$ The last formula implies that
\vspace{-2mm}
$$
\sum\limits_{r\in  I \cap \{1, 2, \ldots, i_0\}}\varphi^p(\tau_r) ~ {\approx} 
~ \frac{\varphi^p(\tau_{i_0})}{\tau_{i_0}}\sum_{r\in  I \cap \{1, 2, \ldots, i_0\}}\tau_r.
$$
On the other hand, setting $\delta: = \left(\sum_{r \in I \cap \{1, 2, \ldots, i_0\}}\tau_r/\tau_{i_0}\right)^{1/(m_0+1)}$ 
we get
\vspace{-2mm}
$$
\delta \leq \Big(\sum_{r \in I \cap \{1, 2, \ldots, i_0\}}\tau_1/\tau_{i_0} \Big)^{1/(m_0+1)} \leq i_0.
$$
Therefore, again by (\ref{regVarEqImprove}), we obtain
$$
\frac{\varphi^p(\tau_{i_0})}{\tau_{i_0}}\sum_{r\in I \cap \{1, 2, \ldots, i_0\} }\tau_r 
= \delta^{m_0+1} \varphi^p(\tau_{i_0}) ~ {\approx} ~
\varphi^p(\delta^{m_0+1}\tau_{i_0}) = \varphi^p\Big(\sum\limits_{r\in I \cap \{1, 2, \ldots, i_0\} }\tau_r \Big),
$$
with a constant depending on $m_0$ and $p.$ Combining the above
formulas and noting that $i_0$ can be arbitrarily large,
we conclude that equivalence \eqref{condOnFundFunc} holds and the proof is complete.
\end{proof}

Theorem~\ref{ExponentM} allows us to construct non-trivial quasi--concave functions 
satisfying condition \eqref{condOnFundFunc}, for a large class of weights.
For example, let $w(t)=1/t$ $(0< t\leq 1)$. In this case $w_r=2^{-r}$, $r=1,2,\dots$
Define $\varphi(t)=t\,\log\frac{e}{t}$ $(0< t\leq 1)$. Obviously, $\varphi$ is quasi--concave.
Elementary calculations show that \eqref{regVarEquivalent} is fulfilled for $\varphi$ with $p=1$ and $\tau(N)=w_N=2^{-N}$
($N=1,2,\dots).$ Thus, by Theorem~\ref{ExponentM}, $\varphi$ satisfies \eqref{condOnFundFunc}.

%%%%%%%%%%%%%%%%%%%%%%%%%%%%%%%%%%%% Section 4
\begin{section}
{\bf $w$--decomposable Lorentz and Marcinkiewicz spaces}
\end{section}

For $1 \leq p < \infty$ and any increasing concave function
$\varphi,$ $\varphi(0)=0,$ the Lorentz space $\Lambda_{p,\varphi}$
consists of all classes of measurable functions $x$ on $[0,1]$ such that
$$
\|x\|_{\Lambda_{p,\varphi}} = \left( \int_{0}^{1} \left[ x^{*}(t)
\varphi(t) \right ]^{p} \frac{dt}{t} \right)^{1/p}<\infty.
$$
The space $\Lambda_{p,\varphi}$ was investigated by Sharpley
\cite{Sh72} and Raynaud \cite{Ra92}, who proved that if $0 <
\gamma_{\varphi} \leq \delta_{\varphi} < 1$, then $\Lambda_{p,\varphi}$ 
is a symmetric space on $[0,1]$ with an equivalent norm
$$
\|x\|_{\Lambda_{p,\varphi}}^{\star} = \left( \int_{0}^{1} \left[
x^{**}(t) \varphi(t) \right ]^{p} \frac{dt}{t} \right)^{1/p},
$$
where $x^{**}(t)= \frac{1}{t} \int_0^t x^*(s)\,ds$ (cf. \cite{Sh72},
Lemma 3.1). Moreover, if $\gamma_{\varphi} > 0,$ then applying
Corollary 3 on page 57 of \cite{KPS82} to the function $\psi = \varphi^p$ $(1\le p<\infty)$ 
(see also \cite[Theorem 6.4(a)]{Ma85}), we obtain that there exists a constant
$K = K(p) \geq 1$ such that
\begin{equation}\label{Cor of KPS}
K^{-1} \varphi^p(t) \leq \int_0^t\frac{\varphi^p(s)}{s}\,ds \leq K
\varphi^p(t)\;\;(0<t\le 1).
\end{equation}
Therefore, the fundamental function $\varphi_{\Lambda_{p,\varphi}}(t)$ is equivalent
to $\varphi(t)$. Inequalities \eqref{Cor of KPS} imply also that, if
$\gamma_{\varphi}
> 0$, then the space $\Lambda_{1,\varphi}$ coincides with the Lorentz space
$\Lambda_{\varphi}$ with the norm
\vspace{-2mm}
$$
\|x\|_{\Lambda_{\varphi}}:=\int_0^1 x^*(t)d\varphi(t).
$$
Recall also that the K\"othe dual of the Lorentz space $\Lambda_{\varphi}$ is isometric
to the Marcinkiewicz space $M_{\tilde{\varphi}}$ with $\tilde{\varphi}(t) = \frac{t}{\varphi(t)}$ and
its norm is
$$
\|x\|_{M_{\tilde{\varphi}}}=\sup_{0<t\le 1} \tilde{\varphi} (t)
x^{**}(t) =  \sup_{0<t\le 1} \frac{1}{\varphi(t)} \int_0^t x^*(s)ds
$$
(cf. \cite{KPS82},  Theorem 5.2 on page 112).

\medskip
We will prove that condition \eqref{condOnFundFunc} is necessary
and sufficient for Lorentz and Marcin\-kie\-wicz spaces to be
$w$-decomposable. We start by proving a specific geometric property
of Lorentz spaces.

\vspace{3mm}
%%%%%%%%%%%%%%%%%%%%%%%%%%%%%%% PROPOSITION 3=4.1
\begin{proposition}\label{LorentzMainProp}
Let $\varphi$ be an increasing non-negative concave function on $[0,1]$ such that $\gamma_{\varphi}>0$, and let 
$1\le p<\infty.$ Then for arbitrary $b>1$ there exists a constant $C = C(b,\varphi, p) > 0$ with the following property:
for any two-sided non-decreasing sequence $(a_j)_{j=-\infty}^{+\infty}$ of reals from $[0,1]$ such that the function
$x=\sum_{j=-\infty}^{+\infty}b^{-j}\chi_{(a_{j-1}, a_j]}$ belongs to $\Lambda_{p,\varphi}$, 
we have
\begin{equation}\label{LorentzMainIneq}
\|x\|_{\Lambda_{p,\varphi}}^p ~ \stackrel{C}{\approx} ~
\sum_{j=-\infty}^{+\infty}b^{-pj}\varphi^p (a_{j}-a_{j-1}).
\end{equation}
\end{proposition}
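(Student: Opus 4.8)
The plan is to estimate $\|x\|_{\Lambda_{p,\varphi}}^p = \int_0^1 [x^*(t)\varphi(t)]^p\,\frac{dt}{t}$ by exploiting the fact that $x$ is already a non-increasing step function (since $(a_j)$ is non-decreasing and $b>1$, so $b^{-j}$ decreases in $j$), hence $x^* = x$ up to the usual normalization. First I would observe that the level sets of $x$ are exactly the intervals $(a_{j-1},a_j]$ with constant value $b^{-j}$, so
\[
\|x\|_{\Lambda_{p,\varphi}}^p = \sum_{j=-\infty}^{+\infty} b^{-pj}\int_{a_{j-1}}^{a_j}\frac{\varphi^p(t)}{t}\,dt.
\]
Thus the whole proposition reduces to the pointwise/blockwise claim that
\[
\int_{a_{j-1}}^{a_j}\frac{\varphi^p(t)}{t}\,dt \;\stackrel{C}{\approx}\; \varphi^p(a_j - a_{j-1})
\]
with a constant independent of $j$. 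That is the heart of the matter, and I would isolate it as the main obstacle.

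To handle that blockwise estimate, I would split into two regimes according to whether the interval $(a_{j-1},a_j]$ is "long" or "short" relative to its left endpoint. If $a_{j-1} \le \frac12 a_j$ (so $a_j - a_{j-1} \asymp a_j$), then by \eqref{Cor of KPS} the integral $\int_{a_{j-1}}^{a_j}\frac{\varphi^p(t)}{t}\,dt \le \int_0^{a_j}\frac{\varphi^p(t)}{t}\,dt \le K\varphi^p(a_j) \asymp K\varphi^p(a_j - a_{j-1})$; for the lower bound, $\int_{a_{j-1}}^{a_j}\frac{\varphi^p(t)}{t}\,dt \ge \varphi^p(a_{j-1})\log\frac{a_j}{a_{j-1}} $ if $a_{j-1}>0$ — but this is awkward when $a_{j-1}$ is small, so instead I would bound below by $\int_{a_j/2}^{a_j}\frac{\varphi^p(t)}{t}\,dt \ge \varphi^p(a_j/2)\log 2 \asymp \varphi^p(a_j)$ using that $\varphi$ is increasing and $a_{j-1}\le a_j/2$. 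In the opposite regime $a_{j-1} > \frac12 a_j$, on the interval $(a_{j-1},a_j]$ we have $\varphi(t)\asymp\varphi(a_j)$ (since $\varphi$ is quasi-concave: $\varphi(a_j)/\varphi(a_{j-1}) \le a_j/a_{j-1} < 2$) and $1/t \asymp 1/a_j$, so $\int_{a_{j-1}}^{a_j}\frac{\varphi^p(t)}{t}\,dt \asymp \varphi^p(a_j)\,\frac{a_j-a_{j-1}}{a_j}$. I must then compare this with $\varphi^p(a_j-a_{j-1})$; writing $s = a_j - a_{j-1} \le a_j$, the quasi-concavity of $\varphi$ gives $\varphi(s)/s \ge \varphi(a_j)/a_j$, i.e. $\varphi^p(s) \ge \varphi^p(a_j)(s/a_j)^p$, while $\gamma_\varphi > 0$ (equivalently the lower-$p$-type-like estimate from \eqref{positive index}-type bounds, or directly $\varphi(s)/\varphi(a_j)\le C(s/a_j)^{\gamma_\varphi}$ only gives the wrong direction) — so I would instead use $\varphi(s)/\varphi(a_j) \ge (s/a_j)$ from quasi-concavity (lower bound) and $\varphi(s)/\varphi(a_j) \le$ something; actually the needed matching bound $\varphi^p(s) \asymp \varphi^p(a_j)\,(s/a_j)$ in this regime is exactly where I'd invoke \eqref{Cor of KPS} again in reverse: $\varphi^p(s) \asymp \int_0^s \frac{\varphi^p(u)}{u}\,du$ and compare the two integrals via a substitution.

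Concretely, for the short-interval case I would note $\int_{a_{j-1}}^{a_j}\frac{\varphi^p(t)}{t}\,dt = \int_0^{a_j}\frac{\varphi^p(t)}{t}\,dt - \int_0^{a_{j-1}}\frac{\varphi^p(t)}{t}\,dt \stackrel{\eqref{Cor of KPS}}{\asymp} \varphi^p(a_j) - \varphi^p(a_{j-1})$ up to controllable constants — but a telescoping difference of two quantities each comparable to $\varphi^p(a_{j-1})\approx\varphi^p(a_j)$ need not be comparable to $\varphi^p(a_j-a_{j-1})$ directly, so I would not rely on this and instead keep the two-regime argument above, using in the short regime the mean value / monotonicity bound $\int_{a_{j-1}}^{a_j}\frac{\varphi^p(t)}{t}\,dt \le \frac{\varphi^p(a_j)}{a_{j-1}}(a_j-a_{j-1}) \asymp \varphi^p(a_j)\frac{a_j-a_{j-1}}{a_j}$ and, via $\gamma_\varphi>0$ applied to $\varphi^p$, that $\varphi^p\big((a_j-a_{j-1})\big) \asymp \varphi^p(a_j)\frac{a_j-a_{j-1}}{a_j}$: here the upper bound is quasi-concavity ($\varphi^p(s)\ge\varphi^p(a_j)(s/a_j)^p$ is the wrong way, so I use $\varphi(s)\ge\varphi(a_j)s/a_j$ giving $\varphi^p(s)/\varphi^p(a_j)\ge(s/a_j)^p$, hence since $s/a_j\le 1$ we get $\ge s/a_j$ is FALSE in general) — so the correct tool here is precisely inequality \eqref{Cor of KPS} for $\psi=\varphi^p$, which encodes that $\varphi^p$ behaves like $t^{\gamma_\varphi p}$ with $\gamma_\varphi p$ possibly $>1$ or $<1$; applying it after the change of variables $t = a_{j-1} + u$ and bounding $\varphi^p(a_{j-1}+u)\asymp\varphi^p(u)$ for $u$ comparable to $a_j$ is the clean route. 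Finally, I would assemble the blockwise equivalences, sum over $j\in\mathbb Z$ with the uniform constant $C=C(b,\varphi,p)$, and note convergence of $\sum_j b^{-pj}\varphi^p(a_j-a_{j-1})$ is equivalent to $x\in\Lambda_{p,\varphi}$, completing the proof. The main obstacle, as indicated, is making the constant in $\int_{a_{j-1}}^{a_j}\frac{\varphi^p(t)}{t}\,dt \stackrel{C}{\approx}\varphi^p(a_j-a_{j-1})$ genuinely uniform in $j$ across both regimes, which forces careful and repeated use of \eqref{Cor of KPS} together with quasi-concavity.
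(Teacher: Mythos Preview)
Your reduction to the blockwise claim
\[
\int_{a_{j-1}}^{a_j}\frac{\varphi^p(t)}{t}\,dt \;\stackrel{C}{\approx}\; \varphi^p(a_j-a_{j-1})
\quad\text{uniformly in }j
\]
is exactly where the argument breaks: this estimate is \emph{false} on short intervals, and no amount of quasi-concavity or \eqref{Cor of KPS} will repair it. Take $\varphi(t)=t^{\alpha}$ with $0<\alpha<1$ (so $\gamma_\varphi=\alpha>0$) and set $a_{j-1}=1-\varepsilon$, $a_j=1$. Then
\[
\int_{1-\varepsilon}^{1}\frac{\varphi^p(t)}{t}\,dt\;\asymp\;\varepsilon,
\qquad
\varphi^p(a_j-a_{j-1})=\varepsilon^{p\alpha},
\]
and these are comparable only when $p\alpha=1$. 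For $p\alpha<1$ the integral is much smaller than $\varphi^p(\varepsilon)$; for $p\alpha>1$ it is much larger. Your attempts in the short regime keep running into ``wrong direction'' inequalities precisely because no two-sided bound exists there. The hesitations in your write-up (``is FALSE in general'', ``the wrong way'') are correct diagnoses, not technical glitches to be patched.

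The paper's proof does \emph{not} try to compare term by term. It first fixes a large constant $C_1$ and sets $I=\{j:\,a_j-a_{j-1}\ge C_1 a_{j-1}\}$. On $I$ your long-interval argument works and gives the blockwise equivalence (this is their (20)). For $i\notin I$, one has $a_i\le (C_1+1)^{i-k}a_k$ where $k<i$ is the nearest index in $I$, and the paper shows that both $b^{-pi}\int_{a_{i-1}}^{a_i}\frac{\varphi^p(t)}{t}\,dt$ and $b^{-pi}\varphi^p(a_i-a_{i-1})$ are dominated by $((C_1+1)/b)^{p(i-k)}$ times the corresponding term at $k$. Provided $b>C_1+1$, these bad terms form a geometric tail and are absorbed into the $I$-terms, yielding (21) and (22). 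The extension to all $b>1$ is then obtained by a halving trick, passing from $b$ to $b^{1/2}$. The essential idea you are missing is that the exponential weight $b^{-pj}$ couples neighbouring blocks: individual short blocks can fail the equivalence badly, but their total contribution to each side is negligible compared with the long blocks.
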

%%%%%%%%%%%%%%%%%%%%%%%%%%%

%%%%%%%%%%%
\begin{proof}
Since $\gamma_{\varphi}>0$, there exist $\kappa > 0$ and $ A > 0$ such that inequality 
(\ref{positive index}) holds. Choose a constant $C_1 = C_1(\varphi) > 1$ 
satisfying the inequality
\begin{equation} \label{estimate19}
\frac{(C_1 +1)^{\kappa}}{A} \geq 2 \, K^2,
\end{equation}
where $K$ is the constant from \eqref{Cor of KPS}, and denote by $I$ the set of all indices 
$j \in \mathbb Z$ such that $a_j - a_{j-1} \geq C_1\, a_{j-1}$.
We prove the following equivalences:
\begin{equation} \label{estimate20}
\int_{a_{j-1}}^{a_j}\frac{\varphi^p(t)}{t}\,dt \approx \varphi^p(a_j-a_{j-1}), ~ j \in I
\end{equation}
and, if $b > C_1 + 1$,
\begin{equation} \label{estimate21}
\|x\|_{\Lambda_{p,\varphi}}^p \approx \sum_{j \in I} b^{-p j} \int_{a_{j-1}}^{a_j}\frac{\varphi^p(t)}{t}\,dt ,
\end{equation} 
\begin{equation} \label{estimate22}
\sum_{j=-\infty}^{+\infty}b^{-pj}\varphi^p (a_{j}-a_{j-1}) ~ {\approx} ~ \sum_{j\in I}b^{-pj}\varphi^p (a_{j}-a_{j-1}),
\end{equation}
with constants which depend only on $b, \varphi$ and $p$.

At first, if $j \in I$ then, by (\ref{positive index}) and (\ref{estimate19}), 
$$
\varphi(a_j) \geq \varphi((C_1+1) a_{j-1}) \geq \frac{(C_1+1)^{\kappa}}{A} \varphi(a_{j-1}) 
\geq 2 K^2\, \varphi(a_{j-1}).
$$
Combining this with (\ref{Cor of KPS}) and the inequality
\begin{equation} \label{estimate23}
\varphi(a_j) \leq \varphi (a_j - a_{j-1}) + \varphi(a_{j-1}) \leq 2 \, \varphi(a_j - a_{j-1}),
\end{equation}
we obtain 
\vspace{-2mm}
\begin{eqnarray*}
\frac{1}{2K} \, \varphi^p(a_j &- & a_{j-1}) \leq
\frac{1}{2K} \, \varphi^p(a_j) \leq \frac{1}{2K} [2\, \varphi^p(a_j) - (2 K^2)^p\, \varphi^p(a_{j-1})]\\
&\leq&
 \frac{1}{2K} [2\, \varphi^p(a_j) - 2 K^2\, \varphi^p(a_{j-1})] =  \frac{\varphi^p(a_j)}{K} - K\, \varphi^p(a_{j-1})\\
 &\leq&
 \int_{0}^{a_j}\frac{\varphi^p(t)}{t}\,dt - \int_{0}^{a_{j-1}} \frac{\varphi^p(t)}{t}\,dt = \int_{a_{j-1}}^{a_j}\frac{\varphi^p(t)}{t} \, dt\\
&\leq& 
 \int_{0}^{a_j}\frac{\varphi^p(t)}{t}\,dt \leq K\varphi^p(a_j) \leq 2^{p}K\varphi^p(a_j-a_{j-1}),
\end{eqnarray*}
which implies (\ref{estimate20}).

Now, assuming $b > C_1 + 1$, we show that the set $I$ is unbounded from below. In fact, otherwise there 
is $j_0 \in \mathbb Z$ such that $a_j - a_{j-1} < C_1 a_{j-1}$ for all $j \leq j_0$. Then, we have 
$a_{j_0} \leq (C_1+1)^{j_0-j}\, a_j$ $(j \leq j_0)$ and by (\ref{Cor of KPS}) and the concavity of $\varphi$,
\begin{eqnarray*}
\|x\|_{\Lambda_{p,\varphi}}^p 
&\geq& 
\sup_{j\leq j_0} b^{-pj}\int_0^{a_j}\frac{\varphi^p(t)}{t}\,dt 
\geq \frac{1}{K} \, \sup_{j\leq j_0} b^{-pj} \varphi^p(a_j) \\
&\geq&
\frac{1}{K} \, \sup_{j\leq j_0} \frac{(C_1+1)^{p(j-j_0)}}{b^{pj}} \, \varphi^p(a_{j_0}) = \infty.
\end{eqnarray*}
Therefore, for a given $i \notin I$ we can find $k = \max \{j < i: j \in I \}$. Further, from the definition 
of $I$ it follows that $ a_i <  (C_1 + 1)^{i-k} \, a_k.$
Since $\varphi$ is concave and $2 a_{k-1} \leq a_k$, we get
\vspace{-2mm}
\begin{eqnarray*}
\int_{a_{i-1}}^{a_i}\frac{\varphi^p (t)}{t}\,dt
&\leq&
\int_{a_k}^{(C_1+1)^{i-k}a_k} \frac{\varphi^p (t)}{t}\,dt \\
&\leq&
\varphi^{p-1} ((C_1+1)^{i-k} a_k) \int_{a_k}^{(C_1+1)^{i-k} a_k} \frac{\varphi (t)}{t}\,dt \\
&\leq&
2^{p-1} (C_1+1)^{(p-1)(i-k)} \, \varphi^{p-1} (\frac{a_k}{2}) \int_{a_k}^{(C_1+1)^{i-k} a_k} \frac{\varphi (t)}{t}\,dt \\
&\leq&
2^{p-1} (C_1+1)^{p(i-k)} \, \varphi^{p-1} (\frac{a_k}{2}) \, a_k\frac{\varphi(a_k)}{a_k} \\
&\leq&
2^p (C_1+1)^{p(i-k)} \varphi^{p-1} (\frac{a_k}{2}) \, \int_{a_k/2}^{a_k} \frac{\varphi (t)}{t}\,dt\\
&\leq&
2^p (C_1+1)^{p(i-k)} \, \int_{a_k/2}^{a_k} \frac{\varphi^p(t)}{t}\,dt \\
&\leq&
2^p (C_1+1)^{p(i-k)} \, \int_{a_{k-1}}^{a_k} \frac{\varphi^p(t)}{t}\,dt
\end{eqnarray*}
and so
$$
b^{-pi}\int_{a_{i-1}}^{a_i}\frac{\varphi^p (t)}{t}\,dt \leq 
2^p \Big(\frac{C_1+1}{b} \Big)^{p(i-k)} b^{-pk} \, \int_{a_{k-1}}^{a_k} \frac{\varphi^p(t)}{t}\,dt.
$$
Since $b > C_1 + 1$ we obtain (\ref{estimate21}).

In a similar way, applying (\ref{estimate23}) for $j = k$, we get
\begin{eqnarray*}
b^{-pi}\varphi^p(a_i-a_{i-1})
&\leq& 
b^{-pi}(C_1+1)^{p(i-k)}\varphi^p(a_k)\\
&\leq&
2^p \left(\frac{C_1+1}{b}\right)^{p(i-k)} b^{-pk}\varphi^p(a_k-a_{k-1}),
\end{eqnarray*}
which implies \eqref{estimate22}.

Relations (\ref{estimate20})--(\ref{estimate22}) imply (\ref{LorentzMainIneq}), so we proved the statement for $b > C_1 + 1$. 
To extend this result to all $b > 1$ it suffices to prove the following: whenever (\ref{LorentzMainIneq})
holds for some $b > 1$ and arbitrary non-decreasing sequence 
 $(a_j)_{j = - \infty}^{\infty}$ with a constant $C$, it is automatically fulfilled for $b^{1/2}$ 
 with a constant not exceeding $2^p b^p C$. Indeed, if
 \vspace{-2mm}
$$
y=\sum_{j=-\infty}^{+\infty}b^{-j/2}\chi_{(a_{j-1}, a_j]} ~ \mbox{ and} ~ z=\sum_{j=-\infty}^{+\infty}b^{-j}\chi_{(a_{2j-2}, a_{2j}]},
$$
then
\vspace{-2mm}
\begin{eqnarray*}
\|y\|_{\Lambda_{p,\varphi}}^p
&=&
\sum_{j=-\infty}^{+\infty} b^{-pj/2} \, \int_{a_{j-1}}^{a_j} \varphi^p(t) \frac{dt}{t} \\
&=&
\sum_{j=-\infty}^{+\infty} b^{-p(2j-1)/2} \, \int_{a_{2j-2}}^{a_{2j-1}} \varphi^p(t) \frac{dt}{t} 
+ \sum_{j=-\infty}^{+\infty} b^{-pj} \, \int_{a_{2j-1}}^{a_{2j}} \varphi^p(t) \frac{dt}{t}\\
& \stackrel{b^{p/2}}{\approx} &
\sum_{j=-\infty}^{+\infty} b^{-pj} \, \int_{a_{2j-2}}^{a_{2j}} \varphi^p(t) \frac{dt}{t} = ~
\|z\|_{\Lambda_{p,\varphi}}^p.
\end{eqnarray*}
On the other hand,
$$
b^{-pj} \varphi^p (a_{2j}-a_{2j-2}) ~ \stackrel{2^pb^{p/2}}{\approx}
~ b^{-pj}\varphi^p (a_{2j}-a_{2j-1}) + b^{-p(2j-1)/2}\varphi^p
(a_{2j-1}-a_{2j-2}),
$$
so we get an analog of \eqref{LorentzMainIneq} for $y$ and $b^{1/2}$ and the proof is complete.
\end{proof}

%%%%%%%%%%%%%%%% Remark 1=4.2
\noindent
\begin{remark} For the space $\Lambda_{1,\varphi}=\Lambda_{\varphi}$ the result can be
proved also by using the following well-known formula {\rm (cf. formula 5.1 in \cite{KPS82} on page 108)}
$$
\|x\|_{\Lambda_{\varphi}} = \sum_{j=-\infty}^{+\infty}(b^{-j}-b^{-j-1}) \varphi
(a_{j}).
$$
\end{remark}
%%%%%%%%%%%%%%%

Let, as above, for a given weight $w, M_k=\{t\in[0,1]:w(t)\in[2^k, 2^{k+1})\} ~ (k\in\mathbb Z)$
and $(w_r)_{r=1}^{\infty}$ be the non-increasing rearrangement of the sequence
$(m(M_k))_{k=-\infty}^{+\infty}.$

%%%%%%%%%%%%%%%%%%%%%%%%%% THEOREM 3=4.3
\begin{theorem}\label{PropLorentzDecompCrit}
Let $\varphi$ be an increasing concave function on $[0,1]$ such that $\gamma_\varphi>0$,
$1\le p<\infty$ and let $w$ be a weight on $[0,1]$. Then the Lorentz space $X:=\Lambda_{p,\varphi}$
is $w$-decomposable if and only if $\varphi$ satisfies condition \eqref{condOnFundFunc}.
\end{theorem}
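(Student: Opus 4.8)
The plan is to rephrase $w$-decomposability of $X=\Lambda_{p,\varphi}$ via Corollary \ref{cor simplifies thm 3.3} — i.e. as the requirement that \eqref{decompForSym7} hold, with exponent exactly $p$, for all pairwise disjoint $x_1,\dots,x_n\in X$ with $m({\rm supp}\,x_i)\le w_i$ — and then to compute both sides of that equivalence by Proposition \ref{LorentzMainProp}, after which condition \eqref{condOnFundFunc} appears by matching the two sides level by level. The structural remark used throughout is that for disjoint functions the distribution function of the sum is the sum of the distribution functions; hence $\bigl(\sum_i x_i\bigr)^{*}$ depends only on the individual distributions, and if each $x_i$ has decreasing rearrangement $x_i^{*}=\sum_j b^{-j}\chi_{(a^{(i)}_{j-1},a^{(i)}_{j}]}$ for one common base $b>1$, then $\bigl(\sum_i x_i\bigr)^{*}$ is again of this form, with $j$-th increment $\sum_i\lambda_{i,j}$, where $\lambda_{i,j}:=a^{(i)}_{j}-a^{(i)}_{j-1}$ and $\sum_j\lambda_{i,j}=m({\rm supp}\,x_i)\le w_i$.

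For the implication ``\eqref{condOnFundFunc}$\Rightarrow$$w$-decomposable'' I would fix any base $b>1$ and, given disjoint $x_i$ with $m({\rm supp}\,x_i)\le w_i$, first discretize the values of each $x_i^{*}$ downward to powers of $b^{-1}$; since disjointness gives $b^{-1}\bigl|\sum_i x_i\bigr|\le\bigl|\sum_i\widetilde x_i\bigr|\le\bigl|\sum_i x_i\bigr|$ pointwise, this changes both sides of the desired equivalence by at most a factor $b$, so one may assume each $x_i$ is of the geometric form above. Applying Proposition \ref{LorentzMainProp} to each $x_i^{*}$ and to $\bigl(\sum_i x_i\bigr)^{*}$ gives
\[
\|x_i\|_X^{p}\;\approx\;\sum_j b^{-pj}\varphi^{p}(\lambda_{i,j}),\qquad
\Bigl\|\sum_i x_i\Bigr\|_X^{p}\;\approx\;\sum_j b^{-pj}\varphi^{p}\Bigl(\sum_i\lambda_{i,j}\Bigr),
\]
with constants depending only on $b,\varphi,p$. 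Then I apply \eqref{condOnFundFunc} separately for each level $j$: the nonzero members of $\lambda_{1,j},\dots,\lambda_{n,j}$ satisfy $\lambda_{i,j}\le w_i$, so after discarding zeros and reindexing (using that $(w_r)$ is non-increasing) they meet the hypothesis of \eqref{condOnFundFunc}, whence $\varphi^{p}\bigl(\sum_i\lambda_{i,j}\bigr)\approx\sum_i\varphi^{p}(\lambda_{i,j})$ with a constant independent of $j$. Summing over $j$ yields $\bigl\|\sum_i x_i\bigr\|_X^{p}\approx\sum_i\|x_i\|_X^{p}$, i.e. \eqref{decompForSym7}, and Corollary \ref{cor simplifies thm 3.3} then shows $X$ is $w$-decomposable.

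For the converse, suppose $X=\Lambda_{p,\varphi}$ is $w$-decomposable; by Corollary \ref{cor simplifies thm 3.3} there is an exponent $q$ with $\bigl\|\sum_i x_i\bigr\|_X\approx\bigl(\sum_i\|x_i\|_X^{q}\bigr)^{1/q}$ for all admissible disjoint families. Testing this on disjoint characteristic functions $\chi_{E_i}$, $m(E_i)=\tau_i\le w_i$, and using $\|\chi_{[0,\tau]}\|_X\approx\varphi(\tau)$ (from \eqref{Cor of KPS}), gives at once $\varphi(\sum_i\tau_i)\approx\bigl(\sum_i\varphi^{q}(\tau_i)\bigr)^{1/q}$ — as also follows from Corollary \ref{cor1new}, since $\varphi_X\approx\varphi$ — and in particular, by Corollary \ref{cor2}, $\gamma_\varphi=\delta_\varphi=1/q$ and $\varphi$ is regularly varying of order $1/q$ at $0$. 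It then remains only to identify $q$ with $p$. For that I would test $w$-decomposability on a pair $\{\chi_{E},g\}$ with $E$ and ${\rm supp}\,g$ disjoint of equal (small) measure $\tau\le w_2$, $g$ of geometric form whose values straddle the value of $\chi_E$, and $\|g\|_X=\|\chi_E\|_X$; computing $\|\chi_E+g\|_X$ by Proposition \ref{LorentzMainProp} (the rearrangement of $\chi_E+g$ is again geometric, obtained by inserting the mass of $\chi_E$ at the appropriate level and shifting the lower-level mass of $g$) and comparing with $\bigl(\|\chi_E\|_X^{q}+\|g\|_X^{q}\bigr)^{1/q}=2^{1/q}\|\chi_E\|_X$, the regular variation of $\varphi$ of order $1/q$ forces $q=p$. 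Hence $\varphi$ satisfies \eqref{condOnFundFunc} with exponent $p$, as required.

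The step I expect to be the real obstacle is this last identification $q=p$ in the necessity direction: $w$-decomposability supplies only \emph{some} exponent $q$, and the ``equal-measure characteristic function'' test pins it down merely as $1/\gamma_\varphi$, so one must separately exploit the inner $\ell_p$-structure of the $\Lambda_{p,\varphi}$-norm over the dyadic-type levels $b^{-j}$ — exactly the structure made explicit by Proposition \ref{LorentzMainProp} — to see that $1/\gamma_\varphi=p$. The delicate computation of $\|\chi_E+g\|_X$ for a spread-out $g$ whose values straddle one level, together with the twofold use of Proposition \ref{LorentzMainProp} with a single fixed base, is the technical heart of the argument; the discretization, the level-by-level application of \eqref{condOnFundFunc}, and the appeals to Corollary \ref{cor simplifies thm 3.3} are routine.
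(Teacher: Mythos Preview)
Your sufficiency argument is exactly the paper's: discretize each $x_i$ to a common geometric base, apply Proposition~\ref{LorentzMainProp} to each $x_i^{*}$ and to $\bigl(\sum_i x_i\bigr)^{*}$, then invoke \eqref{condOnFundFunc} level by level.

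For the necessity direction the paper is far terser than you: it simply cites Corollary~\ref{cor1new} (together with $\varphi_X\approx\varphi$) and stops. You are right that Corollary~\ref{cor1new} a priori only yields \eqref{condOnFundFunc} with \emph{some} exponent $q$, and the paper does not spell out why $q$ coincides with the $p$ of $\Lambda_{p,\varphi}$; so your proof is more scrupulous here than the paper's, which leaves this identification implicit.

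However, your proposed two-element test $\{\chi_E,g\}$ cannot pin down $q$. The $w$-decomposability relation $\|\chi_E+g\|_X\approx\bigl(\|\chi_E\|_X^{q}+\|g\|_X^{q}\bigr)^{1/q}=2^{1/q}\|\chi_E\|_X$ is only an equivalence up to a fixed constant, and for any single pair of functions this is satisfied for \emph{every} $q\in[1,\infty]$; no amount of regular-variation information about $\varphi$ changes that. To separate $p$ from $q$ one must let the number of summands grow. A clean variant of your idea works: for each $n$ pick $c=c(n)>0$ small enough that $\sigma_i:=\varphi^{-1}(2^{i}c)\le w_i$ for $1\le i\le n$, and set $x_i=2^{-i}\chi_{I_i}$ with disjoint $I_i$, $m(I_i)=\sigma_i$, so that $\|x_i\|_X\approx c$ for all $i$. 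Proposition~\ref{LorentzMainProp} (with $b=2$) gives
\[
\Bigl\|\sum_{i=1}^{n}x_i\Bigr\|_X^{p}\;\approx\;\sum_{i=1}^{n}2^{-pi}\varphi^{p}(\sigma_i)\;=\;n\,c^{p},
\]
hence $\bigl\|\sum_i x_i\bigr\|_X\approx n^{1/p}c$; while $w$-decomposability with exponent $q$ gives $\bigl\|\sum_i x_i\bigr\|_X\approx n^{1/q}c$. Both equivalences hold with constants independent of $n$, so $n^{1/p}\approx n^{1/q}$ uniformly in $n$, forcing $p=q$. With this correction your argument is complete and in fact fills a small step that the paper passes over in silence.
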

%%%%%%%%%%%%%%%%%%%%%%%%%%%%%%%%%

%%%%%%%%%%
\begin{proof}
If $X = \Lambda_{p,\varphi}$ is $w$-decomposable then, by Corollary \ref{cor1new}, the relation 
\eqref{condOnFundFunc} holds for the fundamental function $\varphi_X.$ Since, as
it was mentioned above, $\varphi \approx \varphi_X$, then \eqref{condOnFundFunc} is
fulfilled for $\varphi$ as well.

Conversely, suppose that $\varphi$ satisfies \eqref{condOnFundFunc}.
Let $n\in\mathbb N$ and $x_1, x_2, \dots, x_n$ be non-negative functions from $X$ satisfying
\eqref{inclForSupp}. Evidently, there exist $x_1', x_2', \dots, x_n' \in X$
taking their values from the set $\{2^{-k}\}_{k=-\infty}^{\infty} \cup \{0\}$
and such that $x_i(t) \stackrel{2}{\approx} x_i'(t)$ $(0<t\le 1).$
Clearly, $m({\rm supp}\, x_i') = m({\rm supp}\, x_i) \le w_i$ $(1\le i\le n)$ and
$$
m \Big\{t:\sum_{i=1}^n
x_i'(t) = 2^{-k}\Big\} = \sum_{i=1}^n m\{t:x_i'(t)=2^{-k}\}
$$
for all integer $k.$ Therefore, applying \eqref{condOnFundFunc}, we get that
\begin{equation}\label{ApplCondOnFundFunc}
\sum_{i=1}^n\varphi^p(m\{t:x_i'(t)=2^{-k}\}) ~ {\approx} ~ \varphi^p(m\{t:\sum_{i=1}^n
x_i'(t)=2^{-k}\})\;\;(k\in\mathbb{Z}).
\end{equation}
On the other hand, Proposition \ref{LorentzMainProp} yields
\vspace{-2mm}
\begin{equation}\label{decompOfxi}
\|x_i'\|_{X}^p ~ {\approx} ~ \sum_{k=-\infty}^{+\infty} 2^{-pk}\varphi^p (m\{t:x_i'(t)=2^{-k}\})\;\;(1\le i\le n)
\end{equation}
and
\begin{equation}\label{decompOfSumxi}
\|\sum_{i=1}^n x_i'\|_{X}^p ~ {\approx} ~ \sum_{k=-\infty}^{+\infty}2^{-pk} \varphi^p
(m\{t:\sum_{i=1}^n x_i'(t)=2^{-k}\})
\end{equation}
with a constant which depends only on $\varphi$ and $p.$ Combining
relations \eqref{decompOfxi} and \eqref{decompOfSumxi} with
\eqref{ApplCondOnFundFunc}, we obtain \eqref{decompForSym7} for
$x_i'$ and so for $x_i.$ The proof is complete.
\end{proof}

In particular, from the above theorem and a remark after Theorem \ref{ExponentM}
it follows that the Lorentz space $\Lambda_{\varphi}$ generated by the function 
$\varphi(t)=t\,\log\frac{e}{t}$ is $1/t$-decomposable and therefore the Banach couple 
$(\Lambda(\varphi), \Lambda(\varphi)(\frac{1}{t}))$ is $K$-monotone.

%%%%%%%%%%%%%%%%%%%%%%%%%%%%%% THEOREM 4=4.4
\begin{theorem}\label{Cor on non-tri. weights}
Suppose that $\varphi$ is an increasing  concave function on $[0,1]$ such
that $\gamma_\varphi>0$ and $1\le p<\infty.$ The following
conditions are equivalent:

(a) there exists a weight $w$ on $[0,1]$ such that the Lorentz space
$\Lambda_{p,\varphi}$ is $w$-decomposable;

(b) $\varphi$ is equivalent to a regularly varying function at zero
of order $p$.
\end{theorem}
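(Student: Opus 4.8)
The plan is to prove the two implications separately, relying on the structural results of Sections~3 and~4 (Lemma~\ref{lemma2}, Theorem~\ref{ExponentM}, Corollary~\ref{cor2} and, above all, Theorem~\ref{PropLorentzDecompCrit}).

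For $(a)\Rightarrow(b)$, I would start from the assumption that $\Lambda_{p,\varphi}$ is $w$-decomposable. By Theorem~\ref{PropLorentzDecompCrit} this means $\varphi$ satisfies condition~\eqref{condOnFundFunc}, and one must check that the exponent occurring in \eqref{condOnFundFunc} is the very same $p$ as in $\Lambda_{p,\varphi}$. The reverse half of Theorem~\ref{PropLorentzDecompCrit} already uses \eqref{condOnFundFunc} with exponent $p$ (through Proposition~\ref{LorentzMainProp}); a self-contained way to see it is that the decomposition inequality \eqref{decompForSym7} for $X=\Lambda_{p,\varphi}$ holds with one fixed exponent, which two families of test configurations pin down to $p$. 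On the one hand, equal-measure characteristic functions $\chi_{B_i}$ with $m(B_i)=\delta\le w_n$ force that exponent to equal the order of regular variation of $\varphi$ (equivalently $\gamma_\varphi^{-1}$, in the notation of Corollary~\ref{cor2}). On the other hand, equal-norm, sharply height-separated functions $H_i\chi_{[0,\delta_i]}$ with $\delta_i\le w_i$ decaying fast inside the sets $\overline M_i$ force it to equal $p$: for such functions each of the $n$ blocks of $(\sum_i x_i)^*$ contributes the same amount $\approx\|x_1\|^p$ to $\|\sum_i x_i\|_{\Lambda_{p,\varphi}}^p$, so $\|\sum_i x_i\|\approx n^{1/p}\|x_1\|$. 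Once \eqref{condOnFundFunc} is known to hold for $\varphi$ with exponent $p$, specializing it to finitely supported sequences $(\tau_r)$ gives \eqref{regVarEquivalent} with $\tau(N)=w_N$ (the remark opening the proof of Corollary~\ref{cor2}), and Lemma~\ref{lemma2} yields that $\varphi$ is equivalent to a regularly varying function at zero of order $p$, which is~(b).

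For $(b)\Rightarrow(a)$, I would begin from Lemma~\ref{lemma2}: there is $C>0$ and, for each $N\in\mathbb N$, a number $\tau(N)\in(0,1]$ such that \eqref{regVarEquivalent} holds. The idea is then to manufacture a weight $w$ on $[0,1]$ whose rearranged level-set measures $(w_r)$ both decrease geometrically and stay below $\tau$, so that Theorem~\ref{ExponentM} becomes applicable. Concretely, set $\gamma_r:=c\,\min_{1\le j\le r}\bigl(\tau(j)\,2^{\,j-r}\bigr)$, with $c\in(0,1]$ chosen so that $\sum_{r\ge1}\gamma_r\le1$; then $(\gamma_r)$ is positive, non-increasing, satisfies $\gamma_{r+1}\le\gamma_r/2$, and $\gamma_r\le\tau(r)$. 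Let $w$ take the value $2^{-r}$ on a set of measure $\gamma_r$ for $r\ge2$ and the value $2^{-1}$ on a set of measure $\gamma_1+\bigl(1-\sum_{r\ge1}\gamma_r\bigr)$, these sets partitioning $[0,1]$. Then $w$ is a non-trivial weight (each value $2^{-r}$ is attained on a set of positive measure, so $1/w$ is unbounded), the non-increasing rearrangement of $(m(M_k))_k$ is $w_1=\gamma_1+(1-\sum_r\gamma_r)$ and $w_r=\gamma_r$ for $r\ge2$, hence $2\,w_{r+1}\le w_r$ for all $r$ and $w_r\le\tau(r)$ for $r\ge2$ (the case $N=1$ of \eqref{regVarEquivalent} being vacuous), so $\varphi$ satisfies \eqref{regVarEquivalent} with $\tau(N)=w_N$. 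By Theorem~\ref{ExponentM}, $\varphi$ then satisfies \eqref{condOnFundFunc}, and by Theorem~\ref{PropLorentzDecompCrit} the space $\Lambda_{p,\varphi}$ is $w$-decomposable, i.e.\ (a) holds; Theorem~\ref{Theor Tikhomirov} additionally gives that the couple $(\Lambda_{p,\varphi},\Lambda_{p,\varphi}(w))$ is $K$-monotone.

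The two points I expect to need genuine care are: on the $(a)\Rightarrow(b)$ side, making precise that the decomposition exponent of $\Lambda_{p,\varphi}$ is exactly $p$ and not merely $\gamma_\varphi^{-1}$ for a $\varphi$ not yet known to be regularly varying — the sharply height-separated test family is the key new ingredient, and the estimate of each block's contribution is where a short computation with \eqref{Cor of KPS} is required; and on the $(b)\Rightarrow(a)$ side, the bookkeeping in the construction of $w$, where one must simultaneously satisfy the geometric-decay hypothesis of Theorem~\ref{ExponentM}, the domination $w_N\le\tau(N)$ needed to transport \eqref{regVarEquivalent} to $(w_r)$, and the requirement that the level sets still partition $[0,1]$, the normalization at the first index being the only subtle spot. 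Everything else reduces to direct appeals to Lemma~\ref{lemma2}, Theorem~\ref{ExponentM} and Theorem~\ref{PropLorentzDecompCrit}.
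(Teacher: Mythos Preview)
Your proof is correct and follows essentially the paper's route. For $(a)\Rightarrow(b)$ the paper simply invokes Corollary~\ref{cor2} directly (you go via Theorem~\ref{PropLorentzDecompCrit} instead and add an explicit argument pinning the decomposition exponent to $p$ using height-separated test blocks---a point the paper leaves implicit in the words ``of order $p$''); for $(b)\Rightarrow(a)$ the paper builds the weight recursively via $m(M_N)=\min(\tau(N),m(M_{N-1})/2)$ whereas you take $\gamma_r=c\min_{j\le r}\tau(j)2^{j-r}$, but both constructions secure geometric decay together with $w_N\le\tau(N)$ and then feed into Theorem~\ref{ExponentM} and Theorem~\ref{PropLorentzDecompCrit} in the same way.
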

%%%%%%%%%%%%%%%%%%%%%%%%%%%%%%%%%

%%%%%%%%%%
\begin{proof}
First, if $X:=\Lambda_{p,\varphi}$ is $w$-decomposable for some
weight $w$ on $[0, 1]$, then, by Corollary \ref{cor2}, as in the
proof of the previous theorem, we conclude that $\varphi$ is
equivalent to a regularly varying function at zero of order $p.$

Conversely, suppose that $\varphi$ is equivalent to a function that
varies regularly at zero with order $p,$ that is, $\varphi$ satisfies
\eqref{regVarEquivalent} for some $\tau(N)$ $(N=1,2,\dots)$. Consider a family
$(M_N)_{N=1}^{\infty}$ of pairwise disjoint measurable subsets of
$[0, 1]$ such that $m(M_2)=\min(\tau(2),1/4),$
$$
m(M_N)=\min(\tau(N),\frac{m(M_{N-1})}{2}),\;\;N>2,
$$
and let $M_1:=[0, 1]\setminus\bigcup_{N=2}^{\infty}M_N.$ Set
$w(t):=2^N$ for all $t\in M_N$ and $N\in\mathbb N.$ Clearly, $m
(M_{N+1})\le m(M_N)/2$ $(N\in\mathbb N).$ Therefore, by Theorem
\ref{ExponentM}, $\varphi$ satisfies \eqref{condOnFundFunc} for any
sequence $(\tau_N)_{N=1}^{\infty}$ majorized by the sequence $(m
(M_N))_{N=1}^{\infty}$. To complete the proof it remains to apply Theorem
\ref{PropLorentzDecompCrit}.
\end{proof}

It is obvious that $L_p$-spaces $(1\le p\le\infty)$ are $w$-decomposable for
every weight $w.$ On the other hand, we show that for an arbitrary weight $w$ 
there exist $w$-decomposable Lorentz spaces $\Lambda_{\varphi}$ different 
from $L_1$.

%%%%%%%%%%%%%%%%%%%%%%%%%%%%%%% THEOREM 5=4.5
\begin{theorem}\label{nontrivialspaceexists}
Let $w$ be an arbitrary weight on $[0, 1]$. Then there exists an increasing concave
function $\varphi$ such that the space $\Lambda_{\varphi}$ is $w$-decomposable and 
$\Lambda_{\varphi} \ne L_1$.
\end{theorem}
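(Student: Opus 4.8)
The plan is to reduce the problem, via Theorem \ref{Cor on non-tri. weights}, to constructing a single concave increasing function $\varphi$ on $[0,1]$ with $\gamma_\varphi>0$ which is equivalent to a function regularly varying at zero of order $1$ but which is \emph{not} equivalent to $t$ (so that $\Lambda_\varphi\ne L_1$). Indeed, by Theorem \ref{Cor on non-tri. weights} with $p=1$, the space $\Lambda_{1,\varphi}=\Lambda_\varphi$ is $w$-decomposable for \emph{some} weight $w$ precisely when $\varphi$ is equivalent to a regularly varying function at zero of order $1$; but the statement we must prove is stronger, namely that $\Lambda_\varphi$ is $w$-decomposable for the \emph{given} $w$. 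So the real work is to tailor $\varphi$ to $w$. Concretely, I would start from the weight $w$, form the sets $M_k=\{t:w(t)\in[2^k,2^{k+1})\}$ and the non-increasing rearrangement $(w_r)_{r=1}^\infty$ of $(m(M_k))_k$ as in Section 3, and then invoke Theorem \ref{PropLorentzDecompCrit}: it suffices to build a concave increasing $\varphi$ with $\gamma_\varphi>0$, $\varphi\not\approx t$, satisfying condition \eqref{condOnFundFunc} for every sequence $(\tau_r)$ with $0<\tau_r\le w_r$.

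The mechanism for verifying \eqref{condOnFundFunc} is Theorem \ref{ExponentM}: if the $w_r$ decay at least geometrically ($qw_{r+1}\le w_r$ for some $q>1$) and $\varphi$ satisfies \eqref{regVarEquivalent} with $\tau(N)=w_N$, then \eqref{condOnFundFunc} holds automatically. The given $w$ need not have geometrically decaying $w_r$, so first I would pass to an auxiliary weight $\widetilde w$ whose associated sequence $\widetilde w_r$ is geometrically decaying and dominated by $w_r$ — e.g. replace the decreasing rearrangement by $\widetilde w_r:=\min(w_r, 2^{-r})$ and realize it by coarsening $w$ on a subfamily of disjoint sets (exactly the construction used in the proof of Theorem \ref{Cor on non-tri. weights}); $w$-decomposability is insensitive to such modifications because it only depends on the rearranged sequence $(w_r)$ through Corollary \ref{cor simplifies thm 3.3}, and a smaller sequence $\widetilde w_r\le w_r$ gives a \emph{weaker} support restriction, hence a stronger conclusion. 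Then, for this geometric sequence, I would define $\varphi$ to be a concave increasing function which near $0$ behaves like $t\log(e/t)$ but is "frozen" (made exactly linear, i.e. of the form $ct$) on each dyadic-type block where we do not need the logarithmic factor, arranged so that \eqref{regVarEquivalent} with $p=1$ and $\tau(N)=\widetilde w_N$ holds while $\varphi(t)/t\to\infty$ as $t\to 0^+$, guaranteeing $\Lambda_\varphi\ne L_1$. The model computation that $\varphi(t)=t\log(e/t)$ satisfies \eqref{regVarEquivalent} with $p=1$ is already carried out in the remark after Theorem \ref{ExponentM}, and the same elementary estimate applies on each block.

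The main obstacle I anticipate is the simultaneous control of two competing requirements: on the one hand $\varphi$ must satisfy \eqref{regVarEquivalent} with $p=1$ at the prescribed scales $\tau(N)=\widetilde w_N$, which forces $\varphi(Nt)/\varphi(t)\approx N$ for $t\le \widetilde w_N$ — a near-linearity constraint; on the other hand $\varphi$ must \emph{not} be globally equivalent to $t$, so it must grow faster than linearly somewhere arbitrarily close to $0$. The resolution is that \eqref{regVarEquivalent} is only a bounded (two-sided-with-constant) comparison, so a uniformly bounded oscillation of $\varphi(t)/t$ is permitted; the construction must keep $\varphi(t)/t$ oscillating between, say, $1$ and $2$ near zero — bounded, hence compatible with \eqref{regVarEquivalent}, yet never eventually constant, hence $\varphi\not\approx t$ fails to force equivalence... wait: bounded oscillation of $\varphi(t)/t$ does give $\varphi\approx t$. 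So instead I must let $\varphi(t)/t\to\infty$, and reconcile this with \eqref{regVarEquivalent} by exploiting that the scales $\tau(N)=\widetilde w_N$ are \emph{sparse}: between consecutive scales $\varphi(t)/t$ may increase, and \eqref{regVarEquivalent} for a fixed $N$ only constrains the ratio $\varphi(Nt)/\varphi(t)$ for $t$ below $\widetilde w_N$, a regime where the slowly-varying factor $\log(e/t)$ changes by only a bounded multiplicative amount over the range $[t,Nt]$ — which is exactly why $t\log(e/t)$ works. Hence the genuine task is to choose $\varphi$ concave, increasing, with $\gamma_\varphi>0$, with $\varphi(t)/t\uparrow\infty$, and with $\log(\varphi(t)/t)$ varying slowly enough relative to the geometric gaps in $(\widetilde w_N)$; taking $\varphi(t)=t\log(e/t)$ outright (which has $\gamma_\varphi=1$ and the required properties) and then checking, exactly as in the remark after Theorem \ref{ExponentM}, that \eqref{regVarEquivalent} holds with $p=1$ and $\tau(N)=\widetilde w_N$ for the geometrically decaying $\widetilde w_N$, should suffice; Theorem \ref{ExponentM} then upgrades this to \eqref{condOnFundFunc}, and Theorem \ref{PropLorentzDecompCrit} concludes that $\Lambda_\varphi$ is $\widetilde w$-decomposable, hence $w$-decomposable, while $\Lambda_\varphi\ne L_1$ since $\varphi(t)/t=\log(e/t)$ is unbounded.
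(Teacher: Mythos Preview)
There is a genuine gap: your reduction to an auxiliary geometric sequence $\widetilde w_r\le w_r$ is in the wrong direction. By Corollary~\ref{cor simplifies thm 3.3}, $w$-decomposability requires the estimate \eqref{decompForSym7} for all disjoint families with $m(\mathrm{supp}\,x_i)\le w_i$. If $\widetilde w_r\le w_r$, then the class of families satisfying $m(\mathrm{supp}\,x_i)\le\widetilde w_i$ is \emph{smaller}, so $\widetilde w$-decomposability is a \emph{weaker} statement than $w$-decomposability, not a stronger one. Equivalently, verifying \eqref{condOnFundFunc} for all $(\tau_r)$ with $\tau_r\le\widetilde w_r$ does not yield it for all $(\tau_r)$ with $\tau_r\le w_r$. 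Thus Theorem~\ref{ExponentM} cannot be invoked this way.

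Concretely, the fixed choice $\varphi(t)=t\log(e/t)$ does \emph{not} work for every weight. Take a weight $w$ whose rearranged sequence behaves like $w_r\sim c/(r\log^2 r)$ (this is summable, so such a $w$ exists). For $n$ large set $\tau_r=w_n$ for $1\le r\le n$ and $\tau_r$ negligible for $r>n$; this satisfies $\tau_r\le w_r$. Then $\sum_r\tau_r\approx n w_n\sim c/\log^2 n$, so
\[
\frac{\sum_r\varphi(\tau_r)}{\varphi\bigl(\sum_r\tau_r\bigr)}
\approx\frac{n\,w_n\log(e/w_n)}{n w_n\,\log\!\bigl(e/(n w_n)\bigr)}
\sim\frac{\log n+2\log\log n}{2\log\log n}\to\infty,
\]
violating \eqref{condOnFundFunc}. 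Hence $\Lambda_\varphi$ with this $\varphi$ is not $w$-decomposable, and no universal slowly-varying $\varphi$ can serve for all $w$. The paper's proof avoids this by tailoring $\varphi$ to $w$: it builds a piecewise-linear concave $\varphi$ whose derivative doubles only at a sparse sequence $(t_k)$ chosen so that $G(t_{k+1})\le 2^{-k}t_k$, where $G(\alpha)=\sum_r\min\{\alpha,w_r\}$ encodes the given weight, and then verifies \eqref{condOnFundFunc} directly (not via Theorem~\ref{ExponentM}). The dependence of $\varphi$ on $w$ through $G$ is precisely what your plan is missing.
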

%%%%%%%%%%%%%%%%%%%%%%%%%%%%%%%%%

%%%%%%%%%%
\begin{proof}
As above, $M_k=\{t\in[0,1]: w(t)\in[2^k, 2^{k+1})\}$ for $k\in\mathbb Z$
and $(w_r)_{r=1}^{\infty}$ \konst{is} the non-increasing rearrangement of the sequence
$(m(M_k))_{k=-\infty}^{+\infty}$. Define
$$
G(\alpha):=\sum\limits_{r=1}^{\infty}\min\{\alpha,w_r\},\;\;\alpha\ge 0.
$$

\vspace{-2mm}

\noindent
Then $G(1)=1, G(0)=0$ and $G$ is increasing and continuous at
zero. 

Let $(t_k)_{k=0}^{\infty}$ be a sequence from $(0, 1]$ such that $t_0 = 1, 0 < t_k < t_{k-1}/3$ 
for $k \geq 1$ and
\begin{equation} \label{inequality27}
G(t_{k+1}) \leq 2^{-k} \, t_k, ~ k = 0, 1, \ldots.
\end{equation}
Then we set
$\varphi_k'(t) = \max_{i = 0, 1, \ldots, k} \{2^i \chi_{[0, t_i]}(t)\}, k = 0, 1, \ldots$ and 
$\varphi'(t) = \lim_{k \rightarrow \infty} \varphi_k'(t) ~ (0 < t \leq 1)$. It is easy to see 
that $\varphi_k'$ and $\varphi'$ are non-increasing functions on$(0, 1]$. Moreover,
since 
$$
t_k \varphi'(t_k) = t_k \, 2^k \leq \frac{2}{3} t_{k-1}\,2^{k-1} =  \frac{2}{3} t_{k-1} \varphi'(t_{k-1})
$$
it follows that 
$$
\int_0^1 \varphi'(t)dt \leq \sum_{k=0}^{\infty} \varphi'(t_k) t_k \leq  \sum_{k=0}^{\infty} \left(\frac{2}{3}\right)^k <\infty.
$$
Therefore, the function $\varphi(t): = \int_0^t \varphi'(s)\,ds$ is well-defined, increasing and concave on $(0, 1]$. 
We shall prove that the Lorentz space $\Lambda_{\varphi}$ is $w$-decomposable.

In view of Theorem \ref{PropLorentzDecompCrit}, it suffices to show that for some constant $C\ge 1$ and
for any sequence of reals $(d_r)_{r=1}^{\infty}$ such that $0<d_r\le w_r$ $(r=1, 2, \dots)$ we
have
$$
\varphi\left(\sum_{r=1}^\infty d_r\right)
\le\sum_{r=1}^\infty\varphi(d_r)\le C\varphi \left(\sum_{r=1}^\infty d_r\right).
$$
Note that the left hand side of this inequality is an
immediate consequence of the concavity of $\varphi.$ Further, since
$\varphi_k(t):=\int_0^t \varphi_k'(s)\,ds \uparrow \varphi(t),$ then
$\lim_{k\to\infty}\sum_{r=1}^\infty\varphi_k(d_r)=\sum_{r=1}^\infty\varphi(d_r).$
Therefore, it is enough to prove that
\begin{equation}\label{FastIncW_IneqToProve}
\frac{\sum_{r=1}^\infty\varphi_k(d_r)}{\varphi_k\left(\sum_{r=1}^\infty
d_r\right)}\le 3,\;\;k\ge 0.
\end{equation}
Noting that $\sum_{r=1}^\infty d_r\le t_0=1,$ we set
$$
k_0:=\max\Big\{k= 0, 1, 2, \dots:\, \sum\limits_{r=1}^\infty d_r\le t_k\Big\}.
$$
From the definition of $\varphi_k$ it follows that
\begin{equation}\label{simple case}
\varphi_k\left(\sum_{r=1}^\infty d_r\right) = 2^k \, \sum_{r = 1}^{\infty} d_r 
= \sum_{r=1}^\infty\varphi_k(d_r)\;\;\mbox{if}\;\;0\le k \leq k_0.
\end{equation}
Since $t_{k_0+1} < \sum\limits_{r=1}^\infty d_r \leq t_{k_0},$ then,
again by the definition of $\varphi_k$,
\begin{equation}\label{middle case}
\sum_{r=1}^\infty\varphi_{k_0+1}(d_r)\le 2^{k_0+1}\sum_{r=1}^\infty
d_r\le 2\varphi_{k_0+1}\left(\sum_{r=1}^\infty d_r\right).
\end{equation}
Let $k>k_0$ be arbitrary. The inequality $\sum\limits_{r=1}^\infty d_r> t_k$ implies 
that 
\begin{equation} \label{inequality31}
\varphi_k\left(\sum_{r=1}^\infty d_r\right) > \varphi_k(t_k) = 2^k t_k.
\end{equation}
Moreover, since
$$
\varphi_{k+1}(d_r) =
\begin{cases}
2^{k+1} d_r = 2\, \varphi_k(d_r), & {\rm if} ~ d_r \leq t_{k+1},\\
2^k t_{k+1} + \varphi_k(d_r), & {\rm if} ~ d_r > t_{k+1},
\end{cases}
$$
we obtain
$$
\sum_{r=1}^\infty\varphi_{k+1}(d_r) -
\sum_{r=1}^\infty\varphi_k(d_r)= \sum_{r=1}^\infty
\min(2^kt_{k+1},2^k d_r)\le 2^k G(t_{k+1}).
$$
Hence, for any $k > k_0$, by (\ref{inequality31}) and (\ref{inequality27}), we obtain
\begin{eqnarray*}
\frac{\sum_{r=1}^\infty\varphi_{k+1}(d_r)}{\varphi_{k+1}\left(\sum_{r=1}^\infty
d_r\right)}&\le&\frac{\sum_{r=1}^\infty\varphi_k(d_r)}{\varphi_k\left(\sum_{r=1}^\infty
d_r\right)}+\frac{\sum_{r=1}^\infty\varphi_{k+1}(d_r)-\sum_{r=1}^\infty\varphi_k(d_r)}{\varphi_k\left(\sum_{r=1}^\infty
d_r\right)}\\&\le&
\frac{\sum_{r=1}^\infty\varphi_k(d_r)}{\varphi_k\left(\sum_{r=1}^\infty
d_r\right)}+\frac{G(t_{k+1})}{t_k}\le\frac{\sum_{r=1}^\infty\varphi_k(d_r)}{\varphi_k\left(\sum_{r=1}^\infty
d_r\right)}+2^{-k}.
\end{eqnarray*}
Applying the last estimate together with \eqref{simple case} and \eqref{middle case},
we obtain \eqref{FastIncW_IneqToProve}. It is easy to see that $\varphi(t)$ is
not equivalent to $t$, and therefore $\Lambda_{\varphi}\ne L_1$. The proof is complete.
\end{proof}

%%%%%%%%%%%%%%%% Remark 2=4.6
\begin{remark} Theorem \ref{nontrivialspaceexists} can be easily
extended to the spaces $\Lambda_{p,\psi}$ with $p \in (1, \infty)$.
Indeed, let $w$ be an arbitrary weight on $[0, 1]$ and $\varphi$ be
the function from the proof of Theorem \ref{nontrivialspaceexists}.
Set $\psi:=\varphi^{1/p}.$ Clearly, $\psi$ is an increasing concave
function not equivalent to the function $t^{1/p}.$ Therefore,
$\Lambda_{p,\psi}\ne L_p.$ Since relation \eqref{condOnFundFunc} is
fulfilled for $\psi$ as well, then, by Theorem
\ref{PropLorentzDecompCrit}, the space $\Lambda_{p,\psi}$ is
$w$-decomposable.
\end{remark}
\vspace{2mm}

Our next goal is to prove analogous results for Marcinkiewicz spaces
$M_{\varphi}.$ To make use of the duality of Lorentz and Marcinkiewicz
spaces we will need the following statement which is of interest in its own right.

%%%%%%%%%%%%%%%%%%%%%%%%%%%% Theorem 4.7
\begin{theorem}\label{Theorem4.5}
Let $X$ be a Banach lattice on a $\sigma$-finite measure space $(\Omega, \Sigma, \mu)$ 
with ${\rm supp} X = \Omega$ which has the Fatou property and $w$ be a non-trivial
weight on $\Omega$. Then the couple $(X, X(w))$ is $K$-monotone if and only if
$(X^{\prime}, X^{\prime}(w))$ is $K$-monotone, where $X^{\prime}$ is the K\"othe dual of $X$.
\end{theorem}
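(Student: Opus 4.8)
The plan is to use the characterization from Theorem~\ref{Theor Tikhomirov}, which reduces $K$-monotonicity of $(X,X(w))$ to the $w$-decomposability of $X$, and likewise $K$-monotonicity of $(X',X'(w))$ to $w$-decomposability of $X'$. (Note that $X'$ is automatically a Banach lattice with the Fatou property, and its support is again $\Omega$.) Thus it suffices to prove the purely lattice-theoretic statement: $X$ is $w$-decomposable if and only if $X'$ is $w$-decomposable. By symmetry of the roles of $X$ and $X'$ — using that $X'' = X$ under the Fatou property — it is enough to prove one implication, say that $w$-decomposability of $X$ implies $w$-decomposability of $X'$.

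So assume $X$ is $w$-decomposable with constant $C$. Fix $n\in\mathbb N$ and take $x_1,\dots,x_n$, $y_1,\dots,y_n\in X'$ satisfying $\|x_i\|_{X'}=\|y_i\|_{X'}$ and the separation condition \eqref{arrangement}. I want to show $\|\sum x_i\|_{X'}\stackrel{C'}{\approx}\|\sum y_i\|_{X'}$. The key tool is the duality formula $\|g\|_{X'}=\sup\{\int_\Omega |g|\,|f|\,d\mu : \|f\|_X\le 1\}$, together with a near-optimality/localization argument: since the $x_i$ (and $y_i$) have pairwise ``$w$-separated'' supports — more precisely, \eqref{arrangement} forces the supports of distinct $x_i$ to be essentially disjoint in the relevant sense — one can realize the norm $\|\sum x_i\|_{X'}$ by a single norming function $f\in X$, and then \emph{disjointify} $f$: replace $f$ by $f_i := f\cdot\chi_{\mathrm{supp}\,x_i}$ on each block. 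The point is that $\sum_i \int |x_i||f_i| = \int|\sum x_i||f|$ is close to $\|\sum x_i\|_{X'}$, while the pieces $f_i$ have supports satisfying the same separation condition \eqref{arrangement} (with respect to $w$), so $w$-decomposability of $X$ can be applied to a rescaled version of $(f_i)$. To make the blocks satisfy \emph{equal norms}, one rescales: put $g_i := \lambda_i f_i$ where $\lambda_i$ is chosen so that $\int |y_i|\,|g_i| = \|y_i\|_{X'}\cdot(\text{common value})$ or, more simply, so that the $\|g_i\|_X$ are all equal and each $g_i$ nearly norms the corresponding piece of $\sum y_i$; since $\|x_i\|_{X'}=\|y_i\|_{X'}$, the scalars needed for the $x$-side and the $y$-side are comparable. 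Then
\[
\Big\|\sum_i y_i\Big\|_{X'}\ \ge\ \frac{\int|\sum y_i|\,|\sum g_i|}{\|\sum g_i\|_X}\ \gtrsim\ \frac{\sum_i \int |y_i||g_i|}{C\,\|\sum f_i\|_X}\ \gtrsim\ \frac{1}{C}\Big\|\sum_i x_i\Big\|_{X'},
\]
using $w$-decomposability of $X$ in the middle inequality to replace $\|\sum g_i\|_X=\|\sum\lambda_i f_i\|_X$ by a comparable quantity built from the $f_i$, and the fact that $f$ was chosen to norm $\sum x_i$. By symmetry in $x\leftrightarrow y$ this yields the two-sided estimate, hence $X'$ is $w$-decomposable.

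The main obstacle I anticipate is the bookkeeping around the norming functional and the rescaling: one must (i) pass from a single norming $f$ for $\sum x_i$ to \emph{disjoint} blocks $f_i$ without losing more than a constant factor — this is where the separation hypothesis \eqref{arrangement} on the supports is genuinely used, to guarantee that distinct $x_i$ occupy essentially distinct regions so that $\int|\sum x_i||f| = \sum_i\int|x_i||f_i|$ up to negligible overlap; (ii) arrange the $g_i$ to have a common $X$-norm (the definition of $w$-decomposability, via \eqref{equalnorms}, demands equal norms, not just comparable ones) while keeping $\int|x_i||g_i|$ and $\int|y_i||g_i|$ controlled — handled by a scaling argument exploiting $\|x_i\|_{X'}=\|y_i\|_{X'}$; and (iii) check that the blocks $f_i$ (equivalently $g_i$) indeed satisfy condition \eqref{arrangement} with respect to $w$, which is immediate since $\mathrm{supp}\,g_i\subset\mathrm{supp}\,x_i$. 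One technical simplification worth noting: by the remark in the text that $w$-decomposability depends only on the level sets $M_k=\{w\in[2^k,2^{k+1})\}$ of $w$ (and is invariant under $w\mapsto w^q$), one may assume $w$ is, say, a step weight, which streamlines the disjointification. Separability/Fatou issues do not arise here because $X'$ always has the Fatou property and the duality $\|g\|_{X'}=\sup_{\|f\|_X\le1}\int|fg|$ holds with no separability assumption.
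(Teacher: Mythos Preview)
Your overall strategy is exactly the paper's: reduce via Theorem~\ref{Theor Tikhomirov} to showing that $X$ is $w$-decomposable iff $X'$ is, and prove one direction by taking a near-norming $f\in X$ for $\sum x_i$, disjointifying it as $f_i:=f\chi_{\mathrm{supp}\,x_i}$, and then invoking $w$-decomposability of $X$. (Note, incidentally, that the $x_i$ are \emph{exactly} pairwise disjoint---condition \eqref{arrangement} forces $\inf w(\mathrm{supp}\,x_i)\ge 2\sup w(\mathrm{supp}\,x_{i+1})$, so there is no ``negligible overlap'' to worry about in step (i).)

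There is, however, a genuine gap in your construction of the $g_i$. You set $g_i:=\lambda_i f_i$, so $\mathrm{supp}\,g_i\subset\mathrm{supp}\,x_i$. But nothing in \eqref{arrangement} forces $\mathrm{supp}\,x_i$ and $\mathrm{supp}\,y_i$ to intersect; they merely lie in the same $w$-band. Hence $\int|y_i|\,|g_i|\,d\mu$ can vanish identically, and your displayed chain of inequalities collapses at the step $\int|\sum y_i||\sum g_i|\ge \sum_i\int|y_i||g_i|$. (There is also a minor misreading of \eqref{equalnorms}: it asks for $\|f_i\|_X=\|g_i\|_X$ \emph{for each} $i$, not that all blocks share one common norm.)

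The fix---and this is precisely what the paper does---is to choose the $g_i$ afresh on the $y$-side rather than rescale the $f_i$: for each $i$ pick $g_i\in X$ with $\mathrm{supp}\,g_i\subset\mathrm{supp}\,y_i$, with $\|g_i\|_X=\|f_i\|_X$, and with $g_i$ nearly norming $y_i$, i.e.\ $\|y_i\|_{X'}\le \dfrac{2}{\|g_i\|_X}\int|y_i g_i|$. Then the two families $(f_i)$ and $(g_i)$ satisfy \eqref{equalnorms} and, since $\mathrm{supp}\,f_i\cup\mathrm{supp}\,g_i\subset\mathrm{supp}\,x_i\cup\mathrm{supp}\,y_i$, also \eqref{arrangement}. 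Applying $w$-decomposability of $X$ gives $\|\sum g_i\|_X\le C\|\sum f_i\|_X=C$, and the chain
\[
\Big\|\sum y_i\Big\|_{X'}\ \ge\ \frac{\sum_i\int|y_i g_i|}{\|\sum g_i\|_X}
\ \ge\ \frac{1}{2C}\sum_i\|y_i\|_{X'}\|g_i\|_X
\ =\ \frac{1}{2C}\sum_i\|x_i\|_{X'}\|f_i\|_X
\ \ge\ \frac{1}{4C}\Big\|\sum x_i\Big\|_{X'}
\]
goes through. So your plan is right; only the choice of test functions on the $y$-side needs to be corrected.
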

%%%%%%%%%%%%%%%%%%%%%%%%%
\vspace{2mm}

The proof follows from Theorem \ref{Theor Tikhomirov} proved in \cite{Ti11} and the following 
result.
\vspace{2mm}

%%%%%%%%%%%%%%%%%%%%%%%%%%%% PROPOSITION 4=Theorem 4.8
\begin{theorem}\label{PropForDual}
Let $X$ be a Banach lattice on a $\sigma$-finite measure space $(\Omega, \Sigma, \mu)$ 
with ${\rm supp} X = \Omega$ which has the Fatou property and $w$ be a non-trivial
weight on $\Omega$. Then $X$ is $w$-decomposable if and only if its K\"othe dual 
$X^{\prime}$ is $w$-decomposable.
\end{theorem}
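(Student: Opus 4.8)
The plan is to exploit the $w$-decomposability criterion through a duality-pairing argument, translating the defining inequality \eqref{equalsums} for $X$ into the corresponding one for $X^\prime$. Assume $X$ is $w$-decomposable with constant $C$; we want to show $X^\prime$ is $w$-decomposable. So let $n\in\mathbb N$ be fixed and let $x_1,\dots,x_n,y_1,\dots,y_n\in X^\prime$ satisfy $\|x_i\|_{X^\prime}=\|y_i\|_{X^\prime}$ together with the separation condition \eqref{arrangement}. We must estimate $\|\sum_i x_i\|_{X^\prime}$ by a constant multiple of $\|\sum_i y_i\|_{X^\prime}$. First I would normalize: since the $x_i$ may be replaced by $|x_i|$ without changing either side (the supports only shrink and the norms are unchanged, as $X^\prime$ is again a Banach lattice with the same supports), we may assume all functions are non-negative. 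The key device is the ``norming functional'': using the Fatou property of $X$ (which gives $X^{\prime\prime}=X$ isometrically, so the norm of an element of $X^\prime$ is attained, or approximately attained, by a pairing with a non-negative element of the unit ball of $X$), choose $f\in X$, $f\ge 0$, $\|f\|_X\le 1$, with $\int_\Omega (\sum_i x_i) f\,d\mu \ge (1-\varepsilon)\|\sum_i x_i\|_{X^\prime}$.

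The heart of the argument is then to \emph{localize} $f$ to the supports. Write $f_i := f\cdot \chi_{E_i}$, where $E_i := \operatorname{supp} x_i \cup \operatorname{supp} y_i$; by \eqref{arrangement} these sets are ``$w$-separated'' in the sense required, and in particular one should argue that at the level of the relevant index structure $M_k = \{w\in[2^k,2^{k+1})\}$ they meet disjoint blocks of $M_k$'s (this is exactly the combinatorial content extracted just before Theorem \ref{thm 1}; here $\Omega$ is a general measure space so one uses \eqref{arrangement} directly rather than the $[0,1]$-version). Then $\int(\sum_i x_i)f = \sum_i \int x_i f_i$, since $x_i$ is supported in $E_i$. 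Now I would choose, for each $i$, an element $g_i\in X$, $g_i\ge 0$, $\|g_i\|_X = \|f_i\|_X$, supported in $E_i$, realizing $\int y_i g_i \ge (1-\varepsilon)\|y_i\|_{X^\prime}\,\|f_i\|_X$ — possible because $\|y_i\|_{X^\prime} = \sup\{\int y_i h : h\ge 0,\ \operatorname{supp} h\subset E_i,\ \|h\|_X\le 1\}$ up to the restriction of $h$ to $\operatorname{supp} y_i\subset E_i$, which is harmless. The families $(f_i)$ and $(g_i)$ have equal norms termwise and satisfy \eqref{arrangement} (same sets $E_i$), so $w$-decomposability of $X$ gives $\|\sum_i g_i\|_X \le C\|\sum_i f_i\|_X \le C\|f\|_X \le C$, the middle inequality because the $f_i$ are disjointly supported pieces of $f$ (so $\sum_i f_i \le f$ pointwise). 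Finally
\[
(1-\varepsilon)\Bigl\|\sum_i x_i\Bigr\|_{X^\prime} \le \sum_i \int x_i f_i \le \sum_i \|x_i\|_{X^\prime}\|f_i\|_X = \sum_i \|y_i\|_{X^\prime}\|g_i\|_X \le \frac{1}{1-\varepsilon}\sum_i \int y_i g_i = \frac{1}{1-\varepsilon}\int\Bigl(\sum_i y_i\Bigr)\Bigl(\sum_i g_i\Bigr) \le \frac{C}{1-\varepsilon}\Bigl\|\sum_i y_i\Bigr\|_{X^\prime},
\]
where in the last step I used $\|\sum_i g_i\|_X\le C$ and Hölder's inequality for the $X$--$X^\prime$ pairing. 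Letting $\varepsilon\to 0$ yields $\|\sum_i x_i\|_{X^\prime}\le C^2\|\sum_i y_i\|_{X^\prime}$. By symmetry of the hypotheses in $x$ and $y$ this gives \eqref{equalsums} for $X^\prime$, hence $X^\prime$ is $w$-decomposable.

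For the converse, one applies what has just been proved to $X^\prime$ in place of $X$: $X^\prime$ $w$-decomposable implies $X^{\prime\prime}$ $w$-decomposable; since $X$ has the Fatou property, $X^{\prime\prime}=X$ isometrically, so $X$ is $w$-decomposable. I expect the main obstacle to be the bookkeeping around the attainment of the dual norm and the localization step: one must make sure that restricting the norming functional $f$ to the set $E_i$ genuinely controls $\int x_i f_i$ in terms of $\|x_i\|_{X^\prime}\|f_i\|_X$ — this is immediate from Hölder — and, conversely, that the $\|y_i\|_{X^\prime}$ can be (almost) realized by functionals supported where they ``should'' be, which requires a small argument that the supremum defining $\|y_i\|_{X^\prime}$ is unchanged if one restricts test functions to supersets of $\operatorname{supp} y_i$. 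A secondary technical point is that $\Omega$ is only $\sigma$-finite and general, so one should phrase the disjointness/separation bookkeeping purely via \eqref{arrangement} and the sets $E_i$, not via intervals; this is routine but must be done carefully. The passage to $|x_i|$, $|y_i|$ at the start and the use of $X^{\prime\prime}=X$ at the end both lean on the Fatou hypothesis, so these should be flagged explicitly.
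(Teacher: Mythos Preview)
Your argument is correct and is essentially the paper's own proof: both choose an almost-norming element in $X$ for $\sum x_i'$, localize it to the individual supports, build matching testers for the $y_i'$ with the same $X$-norms, apply the $w$-decomposability of $X$ to these two families, and then chain H\"older inequalities exactly as you do (the paper simply fixes $\varepsilon=\tfrac12$ and localizes to ${\rm supp}\,x_i'$ and ${\rm supp}\,y_i'$ separately rather than to $E_i$). Your treatment of the converse via $X''=X$ is also what the paper implicitly relies on; note, incidentally, that the forward implication does not actually need Fatou (the $X'$-norm is by definition realized by pairing with $X$), so Fatou enters only through $X''=X$, and your displayed chain gives constant $C$ rather than $C^2$.
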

%%%%%%%%%%%%%%%%%%%%%%%%%

%%%%%%%%
\begin{proof}
Suppose that $X$ is $w$-decomposable. Let $n\in\mathbb N$ and the functions
$x_1', x_2', \dots, x_n'$, $y_1', y_2', \dots, y_n'\in X'$ satisfy \eqref{equalnorms}
(with the norm from $X'$) and \eqref{arrangement}. Take a function
$x\in X,$ $\|x\|_X=1,$ such that ${\rm supp} \, x\subset\bigcup\limits_{i=1}^n {\rm supp}\, x_i'$ and
$$
\|\sum_{i=1}^n x_i' \|_{X'}\le 2\int_{\Omega} | \sum_{i=1}^n x_i'(t) x(t) |\,d\mu.
$$
Now, consider
$y_i\in X$ such that ${\rm supp}\, y_i\subset {\rm supp}\, y_i',$
$\|y_i\|_X=\|x\chi_{{\rm supp}\, x_i'}\|_X$ and
$$
\|y_i'\|_{X'}\le\frac{2}{\|y_i\|_X}\int_{\Omega} |y_i'(t) y_i(t)|\, d\mu,\;\;1\le i\le n.
$$
Then, according to the hypothesis,
$$
\|\sum_{i=1}^n y_i \|_X\le C \|\sum_{i=1}^n x\chi_{{\rm supp}\, x_i'}\|_X = C,
$$
and, therefore,
\begin{eqnarray*}
\|\sum_{i=1}^n y_i' \|_{X'}
&\ge&
\frac{1}{C} \int_{\Omega} |\sum_{i=1}^n y_i(t)\sum_{j=1}^n
y_j'(t) |\,d\mu=\frac{1}{C}\sum\limits_{i=1}^n \int\limits_{\Omega}
|y_i'(t)y_i(t)|\,d\mu\\ &\ge& \frac{1}{2C}\sum\limits_{i=1}^n
\|y_i'\|_{X'}\|y_i\|_X=\frac{1}{2C}\sum\limits_{i=1}^n
\|x_i'\|_{X'}\|x\chi_{{\rm supp}\, x_i'}\|_X\\
&\ge&
\frac{1}{2C}\sum\limits_{i=1}^n\int_{\Omega}
|x_i'(t)x(t)\chi_{{\rm supp}\, x_i'}(t)|\,d\mu\ge\frac{1}{4C}
\|\sum\limits_{i=1}^n x_i' \|_{X'}.
\end{eqnarray*}
Certainly, the same argument can be applied to get the opposite
estimate. The proof is complete.
\end{proof}

Since $M_{\varphi}^{\prime} = \Lambda_{\tilde{\varphi}}$ (cf. \cite{KPS82}, p. 117) and 
$\delta_{\varphi} + \gamma_{\tilde{\varphi}} = 1$ for any increasing concave function 
$\varphi$ on $[0, 1]$ (cf. \cite{KPS82}, Theorem 4.12 on page 107 or \cite{Ma85}, p. 28), then by 
Theorems \ref{PropLorentzDecompCrit}, \ref{Cor on non-tri. weights} and \ref{PropForDual} 
we immediately obtain the following statements.

%%%%%%%%%%%%%%%%%%%%%%%%%%%%%%% Corollary 3=4.9
\begin{corollary}\label{cor3}
Let $\varphi$ be an increasing concave function on $[0,1]$ such that
$\delta_\varphi< 1$ and let $w$ be a weight on $[0, 1]$. Then the
Marcinkiewicz space $M_\varphi$ is $w$-decomposable if and only if
$\tilde{\varphi}(t) =t/\varphi(t)$ satisfies \eqref{condOnFundFunc}
with $p=1.$
\end{corollary}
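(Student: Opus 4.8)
The plan is to obtain Corollary~\ref{cor3} by composing three equivalences that are already at hand, with no new work. \emph{Step 1 (pass to the K\"othe dual).} The Marcinkiewicz space $M_\varphi$ is a symmetric space on $[0,1]$, hence a Banach lattice with full support and the Fatou property, so Theorem~\ref{PropForDual} applies and gives: $M_\varphi$ is $w$-decomposable if and only if its K\"othe dual $M_\varphi'$ is $w$-decomposable. This reduces the problem to analysing $M_\varphi'$.

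\emph{Step 2 (identify the dual as an $L_1$-type Lorentz space).} By the classical duality between Marcinkiewicz and Lorentz spaces one has $M_\varphi' = \Lambda_{\tilde\varphi}$ with $\tilde\varphi(t) = t/\varphi(t)$. The hypothesis $\delta_\varphi < 1$, together with the identity $\delta_\varphi + \gamma_{\tilde\varphi} = 1$, yields $\gamma_{\tilde\varphi} > 0$; hence, by inequalities~\eqref{Cor of KPS} applied with $p = 1$, the norm of $\Lambda_{\tilde\varphi}$ is equivalent to the $\Lambda_{1,\tilde\varphi}$-norm, i.e. $\Lambda_{\tilde\varphi} = \Lambda_{1,\tilde\varphi}$ up to equivalence of norms. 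One small technical point to record here: $\tilde\varphi$ is always quasi-concave but need not be concave, so I would replace it by its least concave majorant, an equivalent increasing concave function with the same dilation index; this changes neither the $w$-decomposability of the resulting Lorentz space nor the validity of condition~\eqref{condOnFundFunc}, both being stable under passage to an equivalent fundamental function.

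\emph{Step 3 (apply the Lorentz criterion).} Finally, Theorem~\ref{PropLorentzDecompCrit}, applied to the increasing concave function $\tilde\varphi$ (which satisfies $\gamma_{\tilde\varphi} > 0$) with Lorentz exponent $p = 1$, states exactly that $\Lambda_{1,\tilde\varphi}$ is $w$-decomposable if and only if $\tilde\varphi$ satisfies condition~\eqref{condOnFundFunc} with $p = 1$. Chaining Steps~1--3 gives the assertion. Note that the exponent is forced to be $1$ precisely because the K\"othe dual of a Marcinkiewicz space is an $L_1$-type Lorentz space. I do not anticipate any genuine obstacle: the substance lies entirely in Theorems~\ref{PropForDual} and~\ref{PropLorentzDecompCrit}, and the only things demanding (routine) care are the bookkeeping $\gamma_{\tilde\varphi} = 1 - \delta_\varphi > 0$ and the harmless replacement of $\tilde\varphi$ by an equivalent concave function so that Theorem~\ref{PropLorentzDecompCrit} is literally applicable.
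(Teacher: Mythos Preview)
Your proposal is correct and follows essentially the same route as the paper: the paper derives the corollary immediately from the duality $M_\varphi' = \Lambda_{\tilde\varphi}$, the index identity $\delta_\varphi + \gamma_{\tilde\varphi} = 1$, Theorem~\ref{PropForDual}, and Theorem~\ref{PropLorentzDecompCrit} with $p=1$. Your additional remark about replacing $\tilde\varphi$ by its least concave majorant is a sensible bit of housekeeping that the paper leaves implicit.
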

%%%%%%%%%%%%%%%%%%%%%%%%

%%%%%%%%%%%%%%%%%%%%%%%%%%%%%%% Corollary 4=4.10
\begin{corollary}\label{cor4}
If $\varphi$ is an increasing concave function on $[0,1]$ such that
$\delta_\varphi<1$, then the space $M_\varphi$ is $w$-decomposable
for some weight $w$ on $[0, 1]$ if and only if $\varphi$ is
equivalent to a regularly varying function at zero of order
$\infty$.
\end{corollary}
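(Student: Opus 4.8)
The plan is to transfer the problem to Lorentz spaces via K\"othe duality and then invoke Theorem~\ref{Cor on non-tri. weights}. Since $M_\varphi$ is a Marcinkiewicz space it has the Fatou property, so Theorem~\ref{PropForDual} applies: for any non-trivial weight $w$, the space $M_\varphi$ is $w$-decomposable if and only if $M_\varphi' = \Lambda_{\tilde\varphi}$ is $w$-decomposable, where $\tilde\varphi(t) = t/\varphi(t)$. Consequently, $M_\varphi$ is $w$-decomposable for some weight $w$ on $[0,1]$ if and only if $\Lambda_{\tilde\varphi}$ is $w$-decomposable for some weight $w$ on $[0,1]$.

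Next I would check that Theorem~\ref{Cor on non-tri. weights} applies to $\Lambda_{\tilde\varphi}$ with $p=1$. The function $\tilde\varphi$ is quasi-concave but possibly not concave; replacing it by its least concave majorant $\hat{\tilde\varphi}$, which satisfies $\tilde\varphi \le \hat{\tilde\varphi} \le 2\tilde\varphi$, alters neither the space $\Lambda_{\tilde\varphi}$ up to equivalence of norms, nor the property of being equivalent to a regularly varying function at zero. Moreover, from the identity $\delta_\varphi + \gamma_{\tilde\varphi} = 1$ and the hypothesis $\delta_\varphi < 1$ we get $\gamma_{\tilde\varphi} = 1 - \delta_\varphi > 0$, so (since $\Lambda_{1,\tilde\varphi} = \Lambda_{\tilde\varphi}$ when $\gamma_{\tilde\varphi}>0$) Theorem~\ref{Cor on non-tri. weights} with $p=1$ gives: $\Lambda_{\tilde\varphi}$ is $w$-decomposable for some $w$ if and only if $\tilde\varphi$ is equivalent to a quasi-concave function regularly varying at zero of order $1$.

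It then remains to show the purely function-theoretic equivalence: $\tilde\varphi$ is equivalent to a regularly varying function at zero of order $1$ if and only if $\varphi$ is equivalent to a regularly varying function at zero of order $\infty$. For this I would apply Lemma~\ref{lemma2} twice. For all $N \in \mathbb N$ and $t \in (0,1]$ with $Nt \le 1$ one has $\tilde\varphi(Nt)/\tilde\varphi(t) = (Nt/\varphi(Nt))(\varphi(t)/t) = N\,\varphi(t)/\varphi(Nt)$, hence $\tilde\varphi(Nt)/\tilde\varphi(t) \stackrel{C}{\approx} N^{1/1}$ holds for $0 < t \le \tau(N)$ precisely when $\varphi(Nt)/\varphi(t) \stackrel{C}{\approx} 1 = N^{1/\infty}$ holds for the same range of $t$. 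Applying Lemma~\ref{lemma2} with $p=1$ to $\tilde\varphi$ and with $p=\infty$ to $\varphi$, using the same functions $\tau(N)$ and (possibly adjusted) constants, yields the claimed equivalence, and stringing the three steps together proves the corollary.

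The only mildly delicate point is the bookkeeping around replacing $\tilde\varphi$ by a concave equivalent and confirming $\gamma_{\tilde\varphi}>0$ (so that both the Lorentz theorem and Lemma~\ref{lemma2} apply); the rest is a direct chain of equivalences built from Theorems~\ref{PropForDual} and~\ref{Cor on non-tri. weights}, the duality $M_\varphi' = \Lambda_{\tilde\varphi}$, and Lemma~\ref{lemma2}, so I do not expect any substantial new obstacle.
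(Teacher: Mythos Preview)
Your proposal is correct and follows essentially the same route as the paper: the paper derives Corollary~\ref{cor4} directly from the duality $M_\varphi' = \Lambda_{\tilde\varphi}$, the index identity $\delta_\varphi + \gamma_{\tilde\varphi} = 1$, and Theorems~\ref{Cor on non-tri. weights} and~\ref{PropForDual}. You have simply unpacked more of the details (the concave majorant of $\tilde\varphi$ and the Lemma~\ref{lemma2} computation relating regular variation of $\tilde\varphi$ of order $1$ to that of $\varphi$ of order $\infty$), which the paper leaves implicit.
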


In the paper \cite{Ka93}, Kalton proved that if $X$ and $Y$ are
symmetric sequence spaces with the Fatou property such that the
couple $(X, Y(w))$ is $K$-monotone for some non-trivial weight $w$,
then $X= l_p$ and $Y = l_q$ with $1 \leq p, q \le \infty$. The results 
in this section and Theorem  \ref{Theor Tikhomirov} show that in the case of
symmetric function spaces on $[0, 1]$ the situation is completely
different. The following theorems present new examples of
$K$-monotone Banach couples of weighted Lorentz and Marcinkiewicz
function spaces. The first theorem follows from Theorem  \ref{Theor Tikhomirov}, 
Theorem \ref{Cor on non-tri. weights}, Theorem \ref{nontrivialspaceexists}
and Remark 2 and the second one from Theorem  \ref{Theor Tikhomirov}, 
Theorem \ref{PropForDual} on the duality and Corollary \ref{cor4}.

%%%%%%%%%%%%%%%%%%%%%%%%%%%%%%% THEOREM 6=4.11
\begin{theorem}\label{K-monotoneLorentz}
If $\varphi$ is an increasing  concave function on $[0,1]$ such that
$\gamma_\varphi>0$ and $1\le p<\infty$, then the weighted couple
$(\Lambda_{p,\varphi}, \Lambda_{p,\varphi}(w))$ is $K$-monotone for
some (non-trivial) weight $w$ on $[0,1]$ if and only if $\varphi$ is
equivalent to a regularly varying function at zero of order $p$. On
the other hand, for arbitrary weight $w$ on $[0, 1]$ and $1 \leq p <
\infty$ there exists an increasing concave function $\varphi$ on
$[0, 1]$ such that the couple $(\Lambda_{p,\varphi},
\Lambda_{p,\varphi}(w))$ is $K$-monotone and $\Lambda_{p,\varphi}
\neq L_p$.
\end{theorem}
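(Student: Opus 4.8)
The plan is to derive Theorem~\ref{K-monotoneLorentz} by combining Tikhomirov's criterion (Theorem~\ref{Theor Tikhomirov}) with the decomposability results for Lorentz spaces established above; the only preliminary point is to check that Theorem~\ref{Theor Tikhomirov} is applicable. To this end I would first record that, for an increasing concave $\varphi$ on $[0,1]$ with $\gamma_\varphi>0$ and $1\le p<\infty$, the space $\Lambda_{p,\varphi}$ is a symmetric Banach function space on $[0,1]$ with the Fatou property: the symmetric-space structure is part of the Sharpley--Raynaud description recalled before Proposition~\ref{LorentzMainProp} (and for $p=1$, by \eqref{Cor of KPS}, $\Lambda_{1,\varphi}$ coincides with the classical Lorentz space $\Lambda_\varphi$), while the Fatou property is immediate, since $0\le x_n\uparrow x$ in $L^0(I)$ forces $x_n^*\uparrow x^*$ pointwise, whence $\|x_n\|_{\Lambda_{p,\varphi}}\uparrow\|x\|_{\Lambda_{p,\varphi}}$ by monotone convergence, and in particular $x\in\Lambda_{p,\varphi}$ as soon as the norms stay bounded. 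Consequently, for every non-trivial weight $w$ on $[0,1]$, Theorem~\ref{Theor Tikhomirov} gives the equivalence: $(\Lambda_{p,\varphi},\Lambda_{p,\varphi}(w))$ is $K$-monotone if and only if $\Lambda_{p,\varphi}$ is $w$-decomposable.

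For the first assertion I would simply quantify this equivalence over $w$: there is a non-trivial weight $w$ making the couple $(\Lambda_{p,\varphi},\Lambda_{p,\varphi}(w))$ $K$-monotone precisely when there is a non-trivial weight $w$ for which $\Lambda_{p,\varphi}$ is $w$-decomposable, and by Theorem~\ref{Cor on non-tri. weights} (whose standing hypotheses $\gamma_\varphi>0$, $1\le p<\infty$ are exactly those assumed here) the latter holds if and only if $\varphi$ is equivalent to a regularly varying function at zero of order $p$. This proves the first part.

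For the second assertion, fix an arbitrary (non-trivial) weight $w$ on $[0,1]$. When $p=1$, Theorem~\ref{nontrivialspaceexists} produces an increasing concave function $\varphi$ such that $\Lambda_\varphi$ is $w$-decomposable and $\Lambda_\varphi\ne L_1$; when $p\in(1,\infty)$, the Remark following Theorem~\ref{nontrivialspaceexists} produces, taking $\psi=\varphi^{1/p}$ for that same $\varphi$, an increasing concave function $\psi$ such that $\Lambda_{p,\psi}$ is $w$-decomposable and $\Lambda_{p,\psi}\ne L_p$ (the space $L_p$ being, up to equivalence of norms, $\Lambda_{p,t^{1/p}}$). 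In either case the space is a symmetric Banach function space with the Fatou property, by the computation recorded in the first paragraph, so Theorem~\ref{Theor Tikhomirov} turns $w$-decomposability into $K$-monotonicity of the associated weighted couple, which completes the proof.

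The hard part of this statement has already been carried out: the real content sits in Proposition~\ref{LorentzMainProp}, Theorem~\ref{ExponentM}, Theorem~\ref{PropLorentzDecompCrit}, Theorem~\ref{Cor on non-tri. weights}, Theorem~\ref{nontrivialspaceexists} and its Remark, so no new difficulty remains. The only points deserving a line of care are the verification that the Lorentz spaces involved meet the hypotheses of Theorem~\ref{Theor Tikhomirov} (the Fatou property, and the identification $\Lambda_{1,\varphi}=\Lambda_\varphi$ for $\gamma_\varphi>0$), and the bookkeeping separating the given function $\varphi$ of part one, which satisfies $\gamma_\varphi>0$, from the constructed function of part two, which need not have positive lower dilation index but still yields a legitimate symmetric space with the Fatou property.
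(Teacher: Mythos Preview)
Your proposal is correct and follows exactly the route the paper takes: the paper's proof of Theorem~\ref{K-monotoneLorentz} is simply the one-line observation that it follows from Theorem~\ref{Theor Tikhomirov}, Theorem~\ref{Cor on non-tri. weights}, Theorem~\ref{nontrivialspaceexists} and the Remark after it. Your added care in verifying the Fatou property so that Theorem~\ref{Theor Tikhomirov} applies is a welcome expansion of what the paper leaves implicit.
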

%%%%%%%%%%%%%%%%%%%%%%%%%%%%%%%%%

%%%%%%%%%%%%%%%%%%%%%%%%%%%%%%% THEOREM 7=4.12
\begin{theorem}\label{K-monotoneMarcinkiewicz}
If $\varphi$ is an increasing  concave function on $[0,1]$ such that
$\delta_{\varphi} < 1$, then the weighted couple $(M_{\varphi},
M_{\varphi}(w))$ is $K$-monotone for some (non-trivial) weight $w$
on $[0,1]$ if and only if $\varphi$ is equivalent to a regularly
varying function at zero of order $\infty$.
\end{theorem}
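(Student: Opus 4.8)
The plan is to deduce this theorem directly from Theorem~\ref{Theor Tikhomirov} together with Corollary~\ref{cor4}. First I would observe that $M_\varphi$ is a symmetric space on $[0,1]$ with the Fatou property and with ${\rm supp}\,M_\varphi=[0,1]$, so for every non-trivial weight $w$ on $[0,1]$ Theorem~\ref{Theor Tikhomirov} applies and gives that $(M_\varphi,M_\varphi(w))$ is $K$-monotone if and only if $M_\varphi$ is $w$-decomposable. Hence $(M_\varphi,M_\varphi(w))$ is $K$-monotone for some non-trivial weight $w$ precisely when $M_\varphi$ is $w$-decomposable for some non-trivial weight $w$, and by Corollary~\ref{cor4} (whose hypothesis $\delta_\varphi<1$ is exactly the one assumed here) the latter is equivalent to $\varphi$ being equivalent to a regularly varying function at zero of order $\infty$. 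This finishes the argument, so the statement is in fact a formal consequence of the machinery already developed.

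For completeness I would also recall the route by which Corollary~\ref{cor4} itself is obtained, since that is where the real content lies. The key step is the duality Theorem~\ref{PropForDual}: $M_\varphi$ is $w$-decomposable if and only if its K\"othe dual $M_\varphi'=\Lambda_{\tilde\varphi}$ is $w$-decomposable, where $\tilde\varphi(t)=t/\varphi(t)$. Since $\delta_\varphi<1$ we have $\gamma_{\tilde\varphi}=1-\delta_\varphi>0$, so $\Lambda_{\tilde\varphi}=\Lambda_{1,\tilde\varphi}$ is a genuine symmetric space and Theorem~\ref{Cor on non-tri. weights} with $p=1$ yields that it is $w$-decomposable for some weight $w$ if and only if $\tilde\varphi$ is equivalent to a regularly varying function at zero of order $1$. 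It then remains to translate this into a condition on $\varphi$, which is done via the discrete criterion of Lemma~\ref{lemma2}: for each $N\in\mathbb{N}$ one has
\[
\frac{\tilde\varphi(Nt)}{\tilde\varphi(t)}=N\,\frac{\varphi(t)}{\varphi(Nt)},
\]
so that $\tilde\varphi(Nt)/\tilde\varphi(t)\stackrel{C}{\approx}N$ for small $t$ (with a given $\tau(N)$) holds if and only if $\varphi(Nt)/\varphi(t)\stackrel{C}{\approx}1=N^{1/\infty}$ for small $t$ (with the same $\tau(N)$), i.e. if and only if $\varphi$ is equivalent to a regularly varying function at zero of order $\infty$.

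The part I expect to require the most care is not in the present theorem but in the ingredients on which it rests: the duality Theorem~\ref{PropForDual} (transferring $w$-decomposability to the K\"othe dual through a norming-functional argument) and, inside Theorem~\ref{Cor on non-tri. weights}, the construction of a sufficiently fast decreasing weight together with the passage from \eqref{regVarEquivalent} to \eqref{condOnFundFunc} supplied by Theorem~\ref{ExponentM}. Granting those, the only genuinely new manipulation here is the elementary identity relating the dilation behaviour of $\varphi$ and of $\tilde\varphi$, together with the bookkeeping showing that order $p=1$ for $\tilde\varphi$ corresponds exactly to order $\infty$ for $\varphi$.
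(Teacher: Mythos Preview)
Your proposal is correct and follows essentially the same route as the paper: the theorem is stated there as an immediate consequence of Theorem~\ref{Theor Tikhomirov}, the duality Theorem~\ref{PropForDual}, and Corollary~\ref{cor4}, which is exactly the combination you invoke (with Corollary~\ref{cor4} already absorbing the duality step). Your additional unpacking of how Corollary~\ref{cor4} is obtained --- via $M_\varphi'=\Lambda_{\tilde\varphi}$, Theorem~\ref{Cor on non-tri. weights} for $p=1$, and the elementary identity relating the dilation ratios of $\varphi$ and $\tilde\varphi$ --- matches the paper's derivation of that corollary as well.
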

%%%%%%%%%%%%%%%%%%%%%%%%%%%%%%%%%

%%%%%%%%%%%%%%%%%%%%%%%%%%%%%%%% Section 5
\begin{section}
{\bf $w$--decomposable Orlicz spaces}
\end{section}

As we have seen in the previous section, in order to check the property of $w$-decomposa\-bility 
for Lorentz spaces, it is enough to consider only characteristic functions (Theorem 
\ref{PropLorentzDecompCrit}). In this section we will prove that in the case of Orlicz spaces it is 
sufficient to examine scalar multiples of characteristic functions.

As above, for a weight $w$ on $[0,1]$ let
$M_k:=\{t\in[0,1]:w(t)\in[2^k, 2^{k+1})\}$ $(k\in\mathbb Z),
(w_r)_{r=1}^{\infty}$ be the non-increasing rearrangement of the
sequence $(m(M_k))_{k=-\infty}^{+\infty}$ and $\{\bar
{M_r}\}_{r=1}^{\infty}$ denote any rearrangement of the sets $M_k$
such that $m(\bar{M_r}) = w_r, r = 1, 2, \ldots$
\vspace{2mm}

%%%%%%%%%%%%%%%%%%%%%%%%%%%%%%%%%% THEOREM 8=5.1
\begin{theorem}\label{interestinglemma}
Let an Orlicz function $F$ satisfy the $\Delta_{2}$-condition for
large $u$ and let $w$ be a weight on $[0, 1]$. Then, the Orlicz
space $L_F = L_F[0, 1]$ is $w$-decomposable if and only if there
exists $p\in [1,\infty)$ such that for any $n\in\mathbb N,$ all
measurable sets $A_k\subset \overline {M}_k$ and reals $c_k$ $(1\le k \le
n)$ we have
\begin{equation}\label{suffcondfororlicz}
\|\sum_{k=1}^n c_k\chi_{A_k} \|_{L_F}^p~ {\approx} ~ \sum_{k=1}^n\|
c_k\chi_{A_k}\|_{L_F}^p
\end{equation}
with a constant independent of $c_k, A_k ~ (1 \leq k \leq n)$ and $n \in \mathbb N$.
If, in addition, the complementary function $F^*$ satisfies the $\Delta_2$-condition
for large $u$, then the $w$-decomposability of $L_F$ implies that $F$ is equivalent to a regularly 
varying Orlicz function at $\infty$ of order $p$.
\end{theorem}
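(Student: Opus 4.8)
The plan is to treat the two implications of the equivalence separately; the forward one is immediate from the general machinery already developed, and the converse is where the work lies. If $L_F$ is $w$-decomposable, then by Theorem~\ref{thm 1} there are $C$ and $p$ with \eqref{decompForSym7} valid for all $x_1,\dots,x_n$ such that ${\rm supp}\,x_i\subset\overline{M}_i$, and specializing to $x_i=c_i\chi_{A_i}$ with $A_i\subset\overline{M}_i$ gives \eqref{suffcondfororlicz}. For the converse I would start from \eqref{suffcondfororlicz} with a fixed $p$. If $p=1$, then \eqref{suffcondfororlicz} tested on characteristic functions forces $\varphi_{L_F}(t)\approx t$ for $t\le w_1$, hence $L_F=L_1$ up to an equivalent norm (only large values of $F$ matter on $[0,1]$), and there is nothing more to prove; so assume $p>1$. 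By Corollary~\ref{cor simplifies thm 3.3} it suffices to check \eqref{decompForSym7} for pairwise disjoint $x_1,\dots,x_n\in L_F$ with $m({\rm supp}\,x_i)\le w_i$, and by symmetry of $L_F$ one may assume ${\rm supp}\,x_i\subset\overline{M}_i$. Replacing $x_i$ by its dyadic discretization $\sum_j 2^j\chi_{A_{i,j}}$, $A_{i,j}=\{t:2^j\le|x_i(t)|<2^{j+1}\}$, changes every norm by at most a factor $2$; then, using $F\in\Delta_2$ for large $u$, I would discard the layers with $2^j$ very large (controlled via $I_F(x_i/\|x_i\|_{L_F})\le1$) and those with $2^j$ uniformly very small (so the discarded part of $\sum_i x_i$ has norm a negligible fraction of $(\sum_i\|x_i\|_{L_F}^p)^{1/p}$), perturbing both sides of \eqref{decompForSym7} arbitrarily little. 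Thus one may assume $x_i=\sum_{j\in J_i}2^j\chi_{A_{i,j}}$ with each $J_i$ finite and $A_{i,j}\subset\overline{M}_i$; writing $\mu_i:=\|x_i\|_{L_F}$ and $\Lambda:=\|\sum_i x_i\|_{L_F}$, the modular identities $\sum_{j\in J_i}m(A_{i,j})F(2^j/\mu_i)=1$ and $\sum_i\sum_{j\in J_i}m(A_{i,j})F(2^j/\Lambda)=1$ hold (again by $\Delta_2$, the functions being bounded).

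\emph{The scaling law, and the main obstacle.} The core is to extract from \eqref{suffcondfororlicz} a power-type scaling law for $F$: a constant $C_0$ such that, on the range of scales relevant to the configuration above, $C_0^{-1}a^pF(u)\le F(au)\le C_0a^pF(u)$ for all $0<a\le1$. I would derive it by testing \eqref{suffcondfororlicz} with sets $A_k\subset\overline{M}_k$ of common measure $w_n$ ($1\le k\le n$) and coefficients $c_k$ realizing prescribed ratios, evaluating the left-hand norm through the modular via the identity $tF(1/\varphi_{L_F}(t))=1$, and letting $n\to\infty$ --- essentially the computation underlying Proposition~\ref{equiv of functions}, with the role of $F^*\in\Delta_2$ there now played by $F\in\Delta_2$ alone. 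This is also the main obstacle: \eqref{suffcondfororlicz} a priori controls $F$ only at the discrete scales $F^{-1}(1/w_N)$ and only for the discrete multipliers $N^{-1/p}$, so one must interpolate in both variables. Monotonicity and convexity of $F$ handle the multipliers and $\Delta_2$ bridges consecutive scales; the delicate point is the boundedly many ``large-measure'' blocks $\overline{M}_i$ (those with $w_i$ bounded away from $0$), whose layers sit at scales $2^j/\mu_i$ that need not be large, and for which one must either absorb those layers into the negligible part or argue that \eqref{suffcondfororlicz} already pins $F$ down to the scale $F^{-1}(1/w_1)$.

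\emph{The modular estimate and the additional conclusion.} Granting the scaling law, I would finish as follows. For the upper bound in \eqref{decompForSym7} set $\Lambda_0:=C_0^{1/p}(\sum_i\mu_i^p)^{1/p}$; since $\mu_i\le\Lambda_0$, applying the scaling law with $a=\mu_i/\Lambda_0$, $u=2^j/\mu_i$ yields
$$
\sum_i I_F\!\Big(\tfrac{x_i}{\Lambda_0}\Big)=\sum_i\sum_{j\in J_i}m(A_{i,j})\,F\!\Big(\tfrac{\mu_i}{\Lambda_0}\cdot\tfrac{2^j}{\mu_i}\Big)\le C_0\sum_i\Big(\tfrac{\mu_i}{\Lambda_0}\Big)^p\sum_{j\in J_i}m(A_{i,j})\,F\!\Big(\tfrac{2^j}{\mu_i}\Big)=C_0\sum_i\Big(\tfrac{\mu_i}{\Lambda_0}\Big)^p=1,
$$
so $\|\sum_i x_i\|_{L_F}\le\Lambda_0$; using the reverse inequality of the scaling law with $a=\mu_i/\Lambda$ in $\sum_i\sum_{j\in J_i}m(A_{i,j})F(2^j/\Lambda)=1$ gives $\Lambda\ge C_0^{-1/p}(\sum_i\mu_i^p)^{1/p}$. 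This proves \eqref{decompForSym7}, hence the $w$-decomposability of $L_F$. For the last assertion, assume in addition $F^*\in\Delta_2$ for large $u$. Then Corollary~\ref{cor2} (applicable as $L_F$ is symmetric on $[0,1]$) shows that $w$-decomposability forces $\varphi_{L_F}$ to satisfy \eqref{regVarEquivalent} with $\tau(N)=w_N$; by Proposition~\ref{equiv of functions}, $F$ then satisfies \eqref{regVarEquivalentConvex} with the same $\tau(N)$, and since $w_N\to0$ (so $F^{-1}(1/w_N)\to\infty$), rewriting \eqref{regVarEquivalentConvex} with $u$ replaced by $N^{-1/p}$ and filling in the remaining multipliers in $(0,1]$ by monotonicity of $F$ gives condition $(b)$ of Lemma~\ref{lemma1}; hence $F$ is equivalent to a regularly varying Orlicz function at $\infty$ of order $p$.
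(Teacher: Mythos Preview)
Your forward implication and the final paragraph on regular variation are fine and match the paper. The converse, however, has two real gaps.

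\textbf{The $p=1$ reduction is false.} Condition \eqref{suffcondfororlicz} with $p=1$ does \emph{not} force $L_F=L_1$. Take $F(u)=u\log u$ for large $u$; then $L_F=L\log L$ is an Orlicz space with $F\in\Delta_2$, its fundamental function is $\varphi(t)\approx t\log(e/t)\not\approx t$, and by the remark following Theorem~\ref{ExponentM} the Lorentz space $\Lambda_\varphi=L\log L$ is $1/t$-decomposable with $p=1$. So the case $p=1$ cannot be disposed of this way and must be handled by the same mechanism as $p>1$.

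\textbf{The scaling law is not available in the range you need.} From \eqref{suffcondfororlicz} (via the computation you describe, using only $F\in\Delta_2$) one gets
\[
F(N^{-1/p}u)\ \approx\ F(u)/N\quad\text{only for }u\ge F^{-1}(1/w_N),
\]
i.e., the threshold on $u$ grows with $N$. In your modular estimate you need the law with multiplier $a=\mu_i/\Lambda_0$, hence $N\approx\mu_i^{-p}$, applied at $u=2^j/\mu_i$. But the layers of $x_i$ can sit at $u\approx F^{-1}(1/w_i)$ (this is the typical scale forced by $m(\mathrm{supp}\,x_i)\le w_i$ and $I_F(x_i/\mu_i)=1$), and there is no reason for $F^{-1}(1/w_i)\ge F^{-1}(1/w_{\mu_i^{-p}})$ unless $i\ge\mu_i^{-p}$. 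For instance, with $w_k=2^{-k}$ and all $\mu_i=n^{-1/p}$, the threshold is $F^{-1}(2^n)$ for every $i$, while the layers of $x_i$ live near $F^{-1}(2^i)$; only $i=n$ clears the bar. Your proposed discards (small $2^j$, or the finitely many large-measure blocks) do not cure this: the failure occurs across essentially all $i<n$. In effect you are trying to run the argument of Theorem~\ref{regularly var and Orlicz}, which \emph{assumes} regular variation; under only $F\in\Delta_2$ that argument is unavailable.

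\textbf{What the paper does instead.} The paper avoids any scaling law. Given $y_k$ with $\mathrm{supp}\,y_k\subset\overline M_k$ and $\sum\|y_k\|^p=1$, it builds for each $k$ a \emph{single} function $f_k=r_k\chi_{B_k}$ with $B_k\subset\mathrm{supp}\,y_k$, chosen so that simultaneously $\|f_k\|_{L_F}\approx\|y_k\|_{L_F}$ and $\int F(f_k)\approx\int F(\tilde y_k)$, where $\tilde y_k$ is $y_k$ truncated from below at level $c_k=\|y_k\|/\big(2\varphi_{L_F}(m(\mathrm{supp}\,y_k))\big)$. The level $r_k$ is found by an intermediate-value argument for the function $F(r)/F(r/\|\tilde y_k\|)$, and $d_k=m(B_k)$ is read off from $\varphi_{L_F}$. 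The hypothesis \eqref{suffcondfororlicz} is then applied once, to the $f_k$, giving $\|\sum f_k\|\approx 1$; comparability of the modulars transfers this to $\|\sum y_k\|$. No pointwise control of $F(au)/F(u)$ is ever needed, which is exactly why only $F\in\Delta_2$ suffices.
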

%%%%%%%%%%%%%%%%%%%%%%%%%%%%%%%%%

%%%%%%%%%%
\begin{proof}
Suppose, first, that $L_F$ is $w$-decomposable. By Proposition \ref{prop2}, there is $p\in [1,\infty]$
such that \eqref{decompForSym7} holds for $X=L_F,$ which implies
\eqref{suffcondfororlicz}. Since $F$ satisfies the
$\Delta_{2}$-condition for large $u>0$, then $\alpha_X>0$. Therefore,
by Corollary \ref{cor2}, $p<\infty.$
\vspace{2mm}

Conversely, let $n\in\mathbb N$ and $y_k\in L_F,$ ${\rm supp}\, y_k\subset \bar {M_k}$,
$1\le k\le n.$ We may (and will) assume that $y_k$ are positive bounded functions
and
\begin{equation}\label{sum of norms}
\sum_{k=1}^n \|y_k\|^p_{L_F}=1.
\end{equation}
Taking into account Theorem \ref{thm 1}, we need to show that
\begin{equation}\label{norm of sum}
\|\sum_{k=1}^n y_k \|^p_{L_F} ~ {\approx} ~ 1,
\end{equation}
with a constant independent from $n$ and $y_k.$ For each $1\le k\le
n$ we set
$$
c_k = \frac{\|y_k\|_{L_F}}{2\varphi_{L_F}(m({\rm supp}\, y_k))}
$$
and
$$
\tilde y_k(t):=
\begin{cases}
y_k(t),& {\rm if} ~ y_k(t) \geq c_k,\\
0,& {\rm if} ~y_k(t) < c_k.
\end{cases}
$$
Applying (\ref{suffcondfororlicz}) to the functions $c_k \chi_{{\rm supp} y_k}$ and taking into 
account the definition of $c_k$ and (\ref{sum of norms}) we get
\begin{eqnarray*}
\| \sum_{k=1}^n c_k  \chi_{{\rm supp} y_k} \|_{L_F}^p 
&\leq&
C_1  \, \sum_{k=1}^n c_k^p  \varphi_{L_F}(m({\rm supp} y_k))^p\\
&=&
C_1 \, \sum_{k=1}^n 2^{-p} \, \| y_k \|_{L_F}^p = 2^{-p} C_1.
\end{eqnarray*}
Up to equivalence of norms the Orlicz space $L_F=L_F[0, 1]$ depends only on the behaviour of $F$ 
for large enough $u > 0$. Therefore, we may assume that $F( 2u ) \leq C_2 F(u)$ for all $u > 0$. 
Then, from the last inequality it follows that
\begin{equation} \label{inequality35}
\sum_{k=1}^n m({\rm supp} y_k) F(c_k) \leq C_3,
\end{equation}
where $C_3$ is a constant independent of $n$ and $y_k$. Moreover, from the definition of $c_k$ 
and $\tilde y_k$ we have
\begin{equation} \label{inequality36}
\|\tilde y_k\|_{L_F}\le\|y_k\|_{L_F} ~~ {\rm and} ~~ \|\tilde y_k\|_{L_F} \geq
\|y_k\|_{L_F}-\|c_k \chi_{{\rm supp}\, y_k}\|_{L_F}=\frac{1}{2}\|y_k\|_{L_F}.
\end{equation}
Next, let us show that there is $r_k \in [c_k, \sup_{t}\tilde y_k(t)]$ such that 
\begin{equation}\label{equationforri}
F(r_k) = F\left(\frac{r_k}{\|\tilde y_k\|_{L_F}}\right)\int_0^1 F(\tilde y_k(t))dt.
\end{equation}
In fact, consider the function
$$
H_k(t): = \frac{F(\tilde y_k(t))}{F\left(\frac{\tilde y_k(t)}{\|\tilde y_k(t)\|_{L_F}}\right)}, ~ t \in {\rm supp} \, \tilde y_k.
$$
From the equality $\int_0^1 F(\frac{\tilde y_k(t)}{\|\tilde y_k(t)\|_{L_F}})dt=1$ it follows that
$$
\inf_{t\in {\rm supp}\, \tilde y_k} H_k(t) \le \int_0^1 F[\tilde y_k(t)]dt \leq \sup\limits_{t\in {\rm supp}\, \tilde y_k}H_k(t).
$$
Thus, since $\inf\limits_{t\in{\rm supp}\, \tilde y_k} \tilde y_k(t) \geq c_k$, by the continuity of $F$, equality \eqref{equationforri} 
holds for some $r_k$ from the interval $ [c_k, \sup_{t}\tilde y_k(t)]$.

Next, define $d_k\in [0, 1]$ $(k = 1,2, \ldots, n)$ as follows:
$$
d_k =
\begin{cases}
\varphi_{L_F}^{-1}\left(\frac{\|\tilde y_k\|_{L_F}}{r_k}\right),& {\rm if}�~ \|\tilde y_k\|_{L_F} \leq r_k \varphi_{L_F}(m({\rm supp}\, y_k)),\\
m({\rm supp}\, y_k),& {\rm if} ~ \| \tilde y_k\|_{L_F} > r_k \varphi_{L_F}(m({\rm supp}\, y_k)).
\end{cases}
$$
Clearly, by the definition of $d_k$,
\begin{equation}\label{riphidi2}
r_k\varphi_{L_F} (d_k) \leq \|\tilde y_k\|_{L_F}.
\end{equation}
On the other hand, since $r_k \geq c_k,$ we obtain
\begin{equation}\label{riphidi}
r_k \varphi_{L_F}(d_k) \geq \frac{1}{2} \|\tilde y_k\|_{L_F},
\end{equation}
whence $d_k \geq \varphi_{L_F}^{-1}( \|\tilde y_k\|_{L_F} /(2 r_k)) $. Hence, taking into account that 
$F$ satisfies the $\Delta_2$-condition with constant $C_2$ for all $u > 0$, the formula 
$ \varphi_{L_F}(t) = 1/F^{-1}(1/t)$ (see formula (9.23) in \cite{KR61} on page 79 of the English version or 
Corollary 5 in \cite{Ma89} on page 58) and (\ref{equationforri}), we have
\begin{equation}\label{riphidi3}
d_k F(r_k) \geq \frac{F(r_k)}{F\left(\frac{2r_k}{\|\tilde y_k\|_{L_F}}\right)}\ge \frac{1}{C_2}
\frac{F(r_k)}{M\left(\frac{r_k}{\|\tilde y_k\|_{L_F}}\right)} = \frac{1}{C_2} \int\limits_0^1 F[\tilde y_k(t)]\,dt.
\end{equation}
Conversely, from the equality $1/d_k = F\left(1/\varphi_{L_F}(d_k)\right)$, (\ref{riphidi2}) and
\eqref{equationforri} it follows that
\begin{equation}\label{estimate41}
d_k F(r_k) = \frac{F(r_k)}{F(\frac{1}{\varphi_{L_F}(d_k)})} \leq \frac{F(r_k)}{F\left(\frac{r_k}{\|\tilde y_k\|_{L_F}}\right)} 
= \int\limits_0^1 F[\tilde y_k(t)]\,dt.
\end{equation}
Now, by the definition of $d_k$, we have $d_k \leq m({\rm supp} \, y_k)$. Therefore, we can define the scalar 
multiples of characteristic functions $f_k(t) := r_k \chi_{B_k}(t),$ where $B_k \subset {\rm supp}\, y_k$ and
$m(B_k) = d_k.$ 
According to \eqref{riphidi2}, \eqref{riphidi} and (\ref{inequality36}), we have
$$
\frac{1}{4}\|y_k\|_{L_F} \leq \|f_k\|_{L_F} \leq \|y_k\|_{L_F}, k = 1, 2, \ldots, n.
$$
Therefore, in view of \eqref{suffcondfororlicz} and \eqref{sum of
norms}, we obtain
$$
\|\sum\limits_{k=1}^n f_k \|_{L_F}^p ~ {\approx} ~ \sum_{k=1}^n\|f_k\|_{L_F}^p
~ \approx ~ \sum_{k=1}^n\|y_k\|_{L_F}^p=1,
$$
with constants which depend only on $p$. Hence, taking into account that $F$ satisfies the 
$\Delta_{2}$-condition, we conclude that \eqref{norm of sum} will be proved once we show that
\begin{equation*}
\| \sum_{k=1}^n y_k \|_{L_F} ~ \approx ~ \| \sum_{k=1}^n f_k \|_{L_F}
\end{equation*}
with constants independent of $n$ and $y_k$. Since the functions $f_k$ (respectively, $y_k$) are pairwise disjoint, in view of 
estimate \eqref{estimate41}, we find that
\begin{eqnarray*}
\int_0^1 F[\sum_{k=1}^n f_k(t)]\,dt
&=&
\sum_{k=1}^n d_k F(r_k) \leq \sum_{k=1}^n \int_0^1 F(\tilde y_k(t))\,dt \\
& \leq&
\int_0^1 F[\sum_{k=1}^n y_k(t)]\,dt.
\end{eqnarray*}
Conversely, by \eqref{riphidi3} and (\ref{inequality35}), we get
\begin{eqnarray*}
\int_0^1 F[\sum_{k=1}^n y_k(t)]\,dt
&\le&
\sum_{k=1}^n\int_0^1 F[\tilde y_k(t)]\,dt+\sum_{k=1}^n m({\rm supp}\, y_k) F(c_k)\nonumber\\
&\leq&
C_2\int_0^1 F[\sum_{k=1}^n f_k(t)]\,dt + C_3,
\end{eqnarray*}
and we come to the desired result.
\vspace{1mm}

In order to obtain the second assertion of the theorem it is sufficient to apply Corollary \ref{cor1new}, 
Lemmas \ref{lemma1} and \ref{lemma2}, Proposition \ref{equiv of functions} and the elementary 
observation that condition (a) in that proposition implies the equivalence of $F$ to an Orlicz function 
which is regularly varying at $\infty$ of order $p$.
\end{proof}

\noindent
%%%%%%%%%%%%%%%%Remark 3=5.2
\begin{remark} {\rm Arguing in the same way as in the proof of Theorem \ref{interestinglemma} we may obtain 
the following result:} Let an Orlicz function $F$ satisfy the $\Delta_{2}$-condition for large $u$ and $1<p, q <\infty.$
The Orlicz space $L_F[0, 1]$ satisfies the upper $p$-estimate, respectively the lower $q$-estimate, if and
only if there exsists a constant $C>0$ such that for any $n\in\mathbb N,$
all pairwise disjoint measurable sets $A_k$ and reals $c_k$ we have
\begin{equation*}
\|\sum_{k=1}^n c_k\chi_{A_k} \|_{L_F} \le C
( \sum_{k=1}^n\| c_k \chi_{A_k}\|_{L_F}^p )^{1/p},
\end{equation*}
respectively,
\begin{equation*}
(\sum_{k=1}^n \| c_k \chi_{A_k} \|_{L_F}^q)^{1/q} \leq C \|  \sum_{k=1}^n c_k \chi_{A_k}\|_{L_F}.
\end{equation*}
\end{remark}

However, an inspection of the proof of results from \cite{KMP97} (pages 120-121 and 124) shows that the first 
of these inequalities is equivalent to either of the following conditions: the Orlicz space $L_F[0, 1]$ is $p$-convex 
or $L_F[0, 1]$ satisfies the upper $p$-estimate or there exists an Orlicz function $F_1$ equivalent to $F$ for large 
arguments such that $F_1(u^{1/p})$ is a convex function on $[0, \infty)$. At the same time, the second of them is 
equivalent to either of the following conditions: the Orlicz space $L_F[0, 1]$ is $q$-concave or $L_F[0, 1]$ satisfies 
the lower $q$-estimate or there exists an Orlicz function $F_1$ equivalent to $F$ for large arguments such that 
$F_1(u^{1/q})$ is a concave function on $[0, \infty)$.
\vspace{2mm}

The following result is analogous to Theorem \ref{Cor on non-tri. weights} for Lorentz spaces.

%%%%%%%%%%%%%%%%%%%%%%%%%%%%%%%% THEOREM 9=5.3
\begin{theorem}\label{regularly var and Orlicz}
Let $F$ be an Orlicz function equivalent to an Orlicz function which
is regularly varying at $\infty$ of order $p\in [1,\infty)$.
Then there is a weight $w$ on $[0, 1]$ such that the Orlicz space
$L_F$ is $w$-decomposable and, consequently, the couple $(L_F,
L_F(w))$ is $K$-monotone.
\end{theorem}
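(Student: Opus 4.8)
The plan is to imitate the treatment of the Lorentz case in Theorems~\ref{Cor on non-tri. weights} and \ref{nontrivialspaceexists}, with the criterion of Theorem~\ref{PropLorentzDecompCrit} replaced by that of Theorem~\ref{interestinglemma}. Since $w$-decomposability depends on $L_F$ only up to an equivalent renorming, and $L_F$ in turn depends (up to equivalent norm) only on the behaviour of $F$ for large $u$, I would first replace $F$ by an Orlicz function regularly varying at $\infty$ of order $p$; such an $F$ automatically satisfies the $\Delta_2$-condition for large $u$, and after a modification near $0$ I may also assume $F(2u)\le C_2F(u)$ for all $u>0$, as in the proof of Theorem~\ref{interestinglemma}, so that $I_F(x/\|x\|_{L_F})=1$ for every $x\in L_F$. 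Then, using that the inverse of a regularly varying function is regularly varying and that $t\mapsto 1/\psi(1/t)$ interchanges the points $0$ and $\infty$, the quasi-concave fundamental function $\varphi:=\varphi_{L_F}$, $\varphi(t)=1/F^{-1}(1/t)$, is equivalent to a quasi-concave function regularly varying at zero of order $p$ (this route avoids Proposition~\ref{equiv of functions}, whose hypothesis on $F^{*}$ is not available when $p=1$). Hence, by Lemma~\ref{lemma2}, $\varphi$ satisfies \eqref{regVarEquivalent} for some thresholds $\tau(\cdot)$, which I may assume summable, since decreasing the $\tau(N)$ does no harm.

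Next I would define the weight $w$ exactly as in the proof of Theorem~\ref{Cor on non-tri. weights}: choose pairwise disjoint measurable sets $M_2,M_3,\dots\subset[0,1]$ with $m(M_2)=\min(\tau(2),1/4)$ and $m(M_N)=\min(\tau(N),m(M_{N-1})/2)$ for $N>2$, put $M_1=[0,1]\setminus\bigcup_{N\ge2}M_N$, and set $w:=2^N$ on $M_N$. Then $m(M_{N+1})\le m(M_N)/2$, so the non-increasing rearrangement $(w_r)_{r=1}^{\infty}$ of $(m(M_k))$ satisfies $w_{r+1}\le w_r/2$ and $w_r\le\tau(r)$ for $r\ge2$; by Theorem~\ref{ExponentM} with $q=2$, the function $\varphi$ then satisfies condition \eqref{condOnFundFunc} for every sequence $(\tau_r)$ with $0<\tau_r\le w_r$. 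In particular $\gamma_\varphi=1/p>0$, which supplies the analogue of \eqref{positive index} exploited in the proof of Theorem~\ref{ExponentM}. I would also record, alongside this, the quantitative scaling estimate for $F$ furnished by Lemma~\ref{lemma1}(b), extended by iteration (in the spirit of the passage from \eqref{regVarEquivalent} to \eqref{regVarEqImprove}), valid deep enough in the range of $F$ to be reached whenever $m({\rm supp}\,x_k)\le w_k$ with $k\ge2$.

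By Theorem~\ref{interestinglemma} it then remains to verify \eqref{suffcondfororlicz}: for some $C>0$ and all $n$, all $A_k\subset\overline{M}_k$ and reals $c_k$, $\|\sum_{k=1}^n c_k\chi_{A_k}\|_{L_F}^p\stackrel{C}{\approx}\sum_{k=1}^n\|c_k\chi_{A_k}\|_{L_F}^p$. Writing $a_k=m(A_k)\le w_k$, using $\|c_k\chi_{A_k}\|_{L_F}=c_k\varphi(a_k)$ and $\|\sum_k c_k\chi_{A_k}\|_{L_F}=\lambda^{*}$ where $\sum_k a_kF(c_k/\lambda^{*})=1$ (so that $c_k/\lambda^{*}\le 1/\varphi(a_k)=F^{-1}(1/a_k)$), and setting $s_k:=c_k\varphi(a_k)/\lambda^{*}\in(0,1]$, the asserted equivalence is precisely $\sum_k s_k^p\approx 1$, to be deduced from $\sum_k a_kF\!\left(s_k/\varphi(a_k)\right)=1$. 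For $F(u)=u^p$ this is an identity; in general one applies the quantitative regular-variation estimate for $F$ to each term with $a_k$ small — legitimate for $k\ge2$, since then $1/\varphi(a_k)=F^{-1}(1/a_k)\ge F^{-1}(1/\tau(k))$ is large — collapses the terms sharing a common dyadic range of $c_k$ via \eqref{condOnFundFunc} for $\varphi$, and controls the single uncontrolled index $k=1$ together with the indices at which $s_k$ is small by means of $\gamma_\varphi>0$ and the geometric decay of $(w_r)$, as in the proof of Theorem~\ref{ExponentM}. Once \eqref{suffcondfororlicz} is established, $L_F$ is $w$-decomposable by Theorem~\ref{interestinglemma}, and $(L_F,L_F(w))$ is $K$-monotone by Theorem~\ref{Theor Tikhomirov}. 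I expect this last step to be the crux: the reduction that lets one invoke \eqref{condOnFundFunc} tends to merge several small sets $A_k$ into one that need not be small, eroding exactly the smallness that makes the regular variation of $F$ at $\infty$ usable, so the terms with small shape parameter $s_k$ (and the term $k=1$) must be handled by sharp, Potter-type inequalities rather than by a soft argument.
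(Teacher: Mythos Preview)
Your route differs from the paper's and is considerably more circuitous. The paper does \emph{not} go through Theorem~\ref{interestinglemma}, the fundamental function, condition~\eqref{condOnFundFunc}, or Theorem~\ref{ExponentM}. Instead it invokes Corollary~\ref{cor1} directly: it suffices to exhibit disjoint intervals $\Delta_k\subset[0,1]$ so that \eqref{decompForSym7} holds for arbitrary $x_i\in L_F$ with ${\rm supp}\,x_i\subset\Delta_i$. The intervals are chosen straight from the regular variation of $F$: by a compactness argument one finds $v_k$ so that $F(uv)\stackrel{C_1}{\approx}u^pF(v)$ for all $v\ge v_k$ and $u\in[k^{-2}/8,1]$, and then sets $m(\Delta_k)\le 2^{-k-1}/F(v_k)$. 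After normalizing $\|\sum x_i\|_{L_F}=1$ and writing $\lambda_i=\|x_i\|_{L_F}$, the indices are split into $I_1=\{i:\lambda_i\le i^{-2}/8\}$ and $I_2=\{i:\lambda_i>i^{-2}/8\}$. On $I_2$ the regular-variation estimate applies termwise (since $\lambda_i$ is in the admissible range for level $i$); on $I_1$ both $\sum_{I_1}\lambda_i^p$ and $\sum_{I_1}\int F(x_i)$ are bounded by absolute constants using only convexity of $F$ and $\sum_{i}i^{-2}<\infty$. No property of $\varphi_{L_F}$ is ever used.

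Your plan can be made to work, but the invocation of \eqref{condOnFundFunc} in the verification of \eqref{suffcondfororlicz} is a wrong turn---and you correctly sense this when you note that merging the $A_k$ with $c_k$ in a common dyadic range ``erodes the smallness'' needed for regular variation of $F$. In fact \eqref{condOnFundFunc} is not needed at all: your reformulation $\sum_k s_k^p\approx 1$ given $\sum_k F(s_kv_k)/F(v_k)=1$ (with $v_k=F^{-1}(1/a_k)$) is correct, and the clean way to prove it is exactly the $I_1/I_2$ split above, now applied to the shape parameters $s_k$ in place of $\lambda_i$. For $s_k\ge k^{-2}/8$ one has $F(s_kv_k)/F(v_k)\approx s_k^p$ provided the thresholds $w_k$ were chosen small enough (which requires tuning them to $F$'s thresholds $v_k$, not merely to $\varphi$'s thresholds $\tau(k)$); for $s_k<k^{-2}/8$ convexity alone gives $F(s_kv_k)/F(v_k)\le s_k<k^{-2}/8$, so both $\sum s_k^p$ and the modular contribution over these indices are absolutely bounded. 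Thus the machinery of Lemma~\ref{lemma2}, Theorem~\ref{ExponentM} and \eqref{condOnFundFunc} can be dropped entirely, and once you do so your argument collapses to the paper's---with the extra, unneeded passage through Theorem~\ref{interestinglemma}.
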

%%%%%%%%%%%%%%%%%%%%%%%%%%%%%%%%%

%%%%%%%%%%
\begin{proof}
By Corollary \ref{cor1}, it is sufficient to find a sequence of pairwise disjoint intervals
$\{\Delta_k\}_{k=1}^\infty$ from $[0,1]$ such that for any $n\in\mathbb N$
and $x_1, x_2, \dots, x_n \in X$ satisfying the condition
${\rm supp}\, x_i\subset \Delta_i$ $(1\le i\le n)$, relation \eqref{decompForSym7} holds.

First, since $F$ is equivalent to a regularly varying Orlicz function at $\infty$
of order $p$, then Lemma 1 and a simple compactness argument (see also
\cite[Lemma 6.1]{Ka93}) show that there exists a constant $C_1 > 1$ such that
for every $k\in\mathbb N$ there is $v_k>0$ such that for all $v\ge v_k$ and
$u\in [k^{-2}/8,1]$ we have that
\begin{equation}\label{reg varying}
F(uv) ~ \stackrel{C_1}{\approx} ~ u^p F(v).
\end{equation}

Let $v>0$, $\varepsilon>0$ be arbitrary and $\Delta$ be an interval
from $[0,1]$ such that $m(\Delta)\le \varepsilon/F(v).$ Moreover,
suppose that $z\in L_F,$ $z\ge 0$ and ${\rm supp}\,z \subset\Delta$.
Then
$$
\int_{\{t\in\Delta:\,z(t)\le v\}} F[z(t)]\,dt\le F(v) \, m(\Delta)\leq \varepsilon.
$$
Let $\{\Delta_k\}_{k=1}^\infty$ be a sequence of
disjoint intervals from $[0,1]$ such that
\begin{equation*}
m(\Delta_k)\le 2^{-k-1}(F(v_k))^{-1}\;\;(k=1,2,\dots).
\end{equation*}
Then, as it was noted above, for every $z\in L_F$ such that $z\ge
0$ and ${\rm supp}\,z \subset\Delta_k$, we have
\begin{equation}\label{inegral ineq}
\int_{\{t\in\Delta_k:\,z(t)\le v_k\}} F[z(t)]\,dt\le 2^{-k-1}\;\;(k=1,2,\dots).
\end{equation}
Suppose that $\{x_k\}_{k=1}^\infty$ is an arbitrary sequence from $L_F$
such that $x_k\ge 0$ and ${\rm supp}\,x_k\subset\Delta_k$
$(k=1,2,\dots).$ To prove \eqref{decompForSym7} we assume that
$$
\Big\|\sum_{i=1}^n x_i\Big\|_{L_F}=1,
$$
or, equivalently,
\begin{equation}\label{equality for norm}
\sum_{i=1}^n\int_{\Delta_i} F[x_i(t)]\,dt=1.
\end{equation}
If $\lambda_i:=\|x_i\|_{L_F}$ $(i=1,2,\dots),$ then
$0\le\lambda_i\le 1$ and
\begin{equation}\label{norms of summands}
\int_{\Delta_i} F\Big[\frac{x_i(t)}{\lambda_i}\Big]\,dt=1\;\;
(i=1,2,\dots).
\end{equation}
Denote $I_1:=\{i=1,2,\dots,n:\,\lambda_i\le i^{-2}/8\},$
$I_2:=\{1,2,\dots,n\}\setminus I_1.$ Then
\begin{equation}\label{ineq for I_1}
\sum_{i\in I_1}\lambda_i^p\le \frac18\sum_{i\in I_1} i^{-2p}\le
\frac14.
\end{equation}
Now, let $i\in I_2,$ i.e., $\lambda_i\ge i^{-2}/8.$ Then, if $x_i(t) \ge
\lambda_iv_i$, from \eqref{reg varying} it follows that
\begin{equation}\label{main ineq}
C_1^{-1}\lambda_i^p F\Big[ \frac{x_i(t)}{\lambda_i}\Big ] \le
F[x_i(t)]\le C_1\lambda_i^p F\Big[ \frac{x_i(t)}{\lambda_i}\Big].
\end{equation}
Moreover, by \eqref{inegral ineq} and \eqref{norms of summands}, we have 
\begin{eqnarray*}
\int_{\{t\in\Delta_i:\,x_i(t)>\lambda_iv_i\}} F\Big[
\frac{x_i(t)}{\lambda_i}\Big] \,dt &=&
1-\int_{\{t\in\Delta_i:\,x_i(t)\le\lambda_iv_i\}} F\Big[\frac{x_i(t)}{\lambda_i}\Big] \,dt\\
&\geq& 1-2^{-i-1}\ge \frac34,
\end{eqnarray*}
whence, taking into account the left hand side of
\eqref{main ineq}, we obtain 
$$
\int_{\Delta_i} F[x_i(t)] \,dt\ge C_1^{-1}\lambda_i^p
\int_{\{t\in\Delta_i:\,x_i(t)>\lambda_iv_i\}} F \Big[
\frac{x_i(t)}{\lambda_i} \Big]
\,dt\ge\frac34C_1^{-1}\lambda_i^p,\;\;i\in I_2.
$$
Combining this with \eqref{equality for norm} and \eqref{ineq for I_1}, we get
$$
\sum_{i=1}^n\lambda_i^p=\sum_{i\in I_1}\lambda_i^p +\sum_{i\in
I_2}\lambda_i^p\le \frac14+\frac43 C_1\sum_{i=1}^n\int_{\Delta_i}
F[x_i(t)]\,dt\le 2C_1,
$$
and the first inequality in \eqref{decompForSym7} is proved.

On the other hand, using the right hand side of \eqref{main ineq}
and \eqref{norms of summands}, we infer that
\begin{equation}\label{converse ineq}
\begin{split}
\sum_{i\in I_2}\int_{\{t\in\Delta_i:\,x_i(t)>\lambda_iv_i\}}
F[x_i(t)]\,dt
&\leq
C_1\sum_{i\in I_2}\lambda_i^p\int_{\{t\in\Delta_i:\,x_i(t)>\lambda_iv_i\}}
F \Big[\frac{x_i(t)}{\lambda_i}\Big] \,dt\\
&\leq C_1\sum_{i=1}^n\lambda_i^p.
\end{split}
\end{equation}
At the same time, by \eqref{inegral ineq} and the convexity of $F$, we obtain 
\begin{eqnarray*}
\sum_{i\in I_2}\int_{\{t\in\Delta_i:\,x_i(t)\le\lambda_iv_i\}} F[ x_i(t)] \,dt
&\leq&
\sum_{i\in I_2}\lambda_i \int_{\{t\in\Delta_i:\,x_i(t)\le\lambda_iv_i\}}
F \Big[\frac{x_i(t)}{\lambda_i}\Big] \,dt\\
&\leq& \sum_{i=1}^\infty 2^{-i-1}=\frac12
\end{eqnarray*}
and, by \eqref{norms of summands} and the
definition of $I_1$,
$$
\sum_{i\in I_1}\int_{\Delta_i} F[x_i(t)]\,dt\le \sum_{i\in
I_1}\lambda_i \int_{\Delta_i} F \Big[ \frac{x_i(t)}{\lambda_i}\Big]
\,dt\le\frac14.
$$
Hence, taking into account \eqref{equality for norm}, we get
\begin{eqnarray*}
\sum_{i\in I_2}\int_{\{t\in\Delta_i:\,x_i(t)>\lambda_iv_i\}}
F[x_i(t)]\,dt
&=&
1- \sum_{i\in I_2}\int_{\{t\in\Delta_i:\,x_i(t)\le\lambda_iv_i\}} F[x_i(t)]\,dt \\
&-&
\sum_{i\in I_1}\int_{\Delta_i} F[x_i(t)]\,dt\ge \frac14.
\end{eqnarray*}
From this and \eqref{converse ineq} it follows that
$\sum_{i=1}^n\lambda_i^p\ge 1/(4C_1),$ and so the proof of 
\eqref{decompForSym7} is complete.
\end{proof}

%%%%%%%%%%%%%%%%%%%%%%%%%%%%%%%%%%%%%%%%%%%%%%%%%%
\begin{section}
{\bf Ultrasymmetric Orlicz spaces and $w$--decomposa-\\ bility}
\end{section}

In the previous sections we have examined the problem of the
$K$-monotonicity of weighted couples generated by Lorentz,
Marcinkiewicz and Orlicz spaces. We have seen that the central
role in the question is played by the notion of $w$-decomposibility.
It turns out that studying the last property in
a natural way leads to the so-called ultrasymmetric Orlicz spaces.

Recall that a symmetric space $X$ on $[0, 1]$ is {\it
ultrasymmetric} if $X$ is an interpolation space between the Lorentz
space $\Lambda_{\varphi_X}$ and the Marcinkiewicz space
$M_{\varphi_X}$. These spaces were studied by Pustylnik \cite{Pu03},
who proved that they embrace all possible generalizations of
Lorentz-Zygmund spaces and have a simple analytical description.
Moreover, one could substitute ultrasymmetric spaces into almost all
results concerning classical spaces such as Lorentz-Zygmund spaces,
and so they are very useful in many applications (see, for example,
Pustylnik \cite{Pu05} and \cite{Pu06}).

Pustylnik asked about a description of ultrasymmetric Orlicz spaces
(see \cite{Pu03}, p. 172). In the case of reflexive Orlicz spaces this
problem was solved in \cite{AM08}: such a space is ultrasymmetric if
and only if it coincides (up to equivalence of norms) with a Lorentz
space $\Lambda_{p, \varphi}$ for some $1 < p < \infty$ and some
increasing concave function $\varphi$ on $[0, 1]$.

As it was said above, the class of $w$--decomposable symmetric
spaces is closely related to the class of ultrasymmetric Orlicz
spaces. Our next theorem shows that in the case when a weight
$w$ changes sufficiently fast any $w$--decomposable symmetric space
with non-trivial Boyd indices is an ultrasymmetric Orlicz space.

Again, as above, for a weight $w$ defined on $[0,1]$, let
$M_k:=\{t\in[0,1]:w(t)\in[2^k, 2^{k+1})\}$ $(k\in\mathbb Z)$ and
$(w_k)_{k=1}^{\infty}$ be the non-increasing rearrangement of the
sequence $(m(M_k))_{k=-\infty}^{+\infty}$

\vspace{3mm}
%%%%%%%%%%%%%%%%%%%%%%%%%%%%%%%% THEOREM 10=6.1
\begin{theorem}\label{ultrasymm. Orlicz}
Let $X$ be a symmetric space on $[0,1]$ with non-trivial Boyd indices and $w$ be a weight 
on $[0, 1]$ satisfying the condition:
\begin{equation}\label{assumption on w}
{\it there ~ are}  ~k_0\in\mathbb N ~{\it and} ~ c_0>0 ~ {\it such ~ that} ~w_k2^k\ge c_0 ~{\it for} ~ k\ge k_0.
\end{equation}
\begin{itemize}
\item[$(a)$] If $X$ is $w$--decomposable, then $X$ is an ultrasymmetric Orlicz space.
\item[$(b)$] If $X$ has the Fatou property and $(X, X(w))$ is a $K$-monotone couple, then $X$ is 
an ultrasymmetric Orlicz space.
\end{itemize}
\end{theorem}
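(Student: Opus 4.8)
The plan is to get $(b)$ for free from $(a)$ and Theorem~\ref{Theor Tikhomirov}: if $X$ has the Fatou property and $(X,X(w))$ is $K$-monotone, then $X$ is $w$-decomposable, so $(a)$ applies. Thus the whole problem reduces to $(a)$, and the strategy for $(a)$ is to prove that $X$ coincides, up to equivalence of norms, with the Lorentz space $\Lambda_{p,\varphi}$, where $\varphi=\varphi_X$ and $p\in(1,\infty)$; once this is done, the fact that $\Lambda_{p,\varphi}$ with $\varphi$ regularly varying at zero of order $p\in(1,\infty)$ is an ultrasymmetric Orlicz space is exactly (the relevant direction of) the description of reflexive ultrasymmetric Orlicz spaces from \cite{AM08}.

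\textbf{Ingredients.} First I would collect what is already available. Since $X$ has non-trivial Boyd indices and is $w$-decomposable, Corollary~\ref{cor2} gives $p\in(1,\infty)$ with $\alpha_X=\gamma_\varphi=\delta_\varphi=\beta_X=1/p$ (here $p<\infty$ and $p>1$ precisely because the Boyd indices are non-trivial), $\varphi$ is equivalent to a function regularly varying at zero of order $p$, and by Corollary~\ref{cor1new} the function $\varphi$ satisfies \eqref{condOnFundFunc} with this $p$; in particular $\sum_{r\ge1}\varphi(w_r)^p\stackrel{C}{\approx}\varphi(1)^p$ (take $\tau_r=w_r$), and, by \eqref{Cor of KPS} and regular variation, $\|f\|_{\Lambda_{p,\varphi}}^p=\int_0^1(f^*(t)\varphi(t))^p\,\tfrac{dt}{t}\stackrel{C}{\approx}\sum_{n\ge1}\bigl(f^*(2^{-n})\varphi(2^{-n})\bigr)^p$, together with $\varphi(2t)\stackrel{C}{\approx}2^{1/p}\varphi(t)$. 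Next, by Corollary~\ref{cor simplifies thm 3.3}, $w$-decomposability means exactly that \eqref{decompForSym7} holds for any finite family of pairwise disjoint $x_i\in X$ with $m({\rm supp}\,x_i)\le w_i$; applying this to tails $\sum_{m\le i\le n}x_i$ shows that the partial sums of any such disjoint series form a Cauchy sequence in $X$, and since $X$-convergence implies convergence in measure on $[0,1]$, this upgrades — with \emph{no} use of the Fatou property — to: for every $f\in X$ and every partition $(E_i)_{i\ge1}$ of $(0,1]$ with $m(E_i)=w_i$ one has $\|f\|_X\stackrel{C}{\approx}\bigl(\sum_{i\ge1}\|f^*\chi_{E_i}\|_X^p\bigr)^{1/p}$ (and the analogous fact on subintervals).

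\textbf{The construction.} The heart of the proof is to choose $(E_i)$ so that this formula can be compared with $\|f\|_{\Lambda_{p,\varphi}}$, and this is where hypothesis \eqref{assumption on w} enters. From $w_k\ge c_0 2^{-k}$ $(k\ge k_0)$ and monotonicity of $(w_k)$ one fixes an integer $L=L(c_0,k_0)$ with $w_i\ge 2^{-(i+L)}$ for \emph{all} $i\ge1$, so that the dyadic interval $I_n:=(2^{-n},2^{-n+1}]$ (of measure $2^{-n}$) fits into the $i$-th slot when $n=i+L$. I would then take $E_i:=I_{i+L}\cup F_i$, where $\{F_i\}_{i\ge1}$ is any partition of $(2^{-L},1]$ with $m(F_i)=w_i-2^{-(i+L)}$ (possible since $\sum_i(w_i-2^{-(i+L)})=1-2^{-L}$). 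For the lower estimate one uses $\|f^*\chi_{E_i}\|_X\ge\max\bigl(\|f^*\chi_{I_{i+L}}\|_X,\|f^*\chi_{F_i}\|_X\bigr)$, bounds $\|f^*\chi_{I_{i+L}}\|_X\ge f^*(2^{-(i+L-1)})\varphi(2^{-(i+L)})$, and invokes $\varphi(2t)\stackrel{C}{\approx}2^{1/p}\varphi(t)$ together with the discretization of $\|f\|_{\Lambda_{p,\varphi}}$: the $I_{i+L}$-contributions sum to $\gtrsim\sum_{n\ge L}(f^*(2^{-n})\varphi(2^{-n}))^p$, while the $F_i$-contributions, by the decomposition formula applied on $(2^{-L},1]$, sum to $\gtrsim\|f^*\chi_{(2^{-L},1]}\|_X^p$, which dominates the finitely many missing scales $n<L$; hence $\|f\|_X\gtrsim\|f\|_{\Lambda_{p,\varphi}}$. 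For the reverse estimate I would bound $\|f^*\chi_{E_i}\|_X$ from above by $C\|f^*\chi_{E_i}\|_{\Lambda_\varphi}$ using $\Lambda_\varphi\stackrel{1}{\hookrightarrow}X$ and splitting the Lorentz integral over $I_{i+L}$ and $F_i$: the $I_{i+L}$-part gives $\lesssim\|f\|_{\Lambda_{p,\varphi}}^p$, and the $F_i$-part is at most $Cf^*(2^{-L})^p\sum_i\varphi(w_i)^p\stackrel{C}{\approx}f^*(2^{-L})^p\varphi(1)^p\lesssim\|f\|_{\Lambda_{p,\varphi}}^p$ by \eqref{condOnFundFunc} and \eqref{Cor of KPS}. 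Combining the two estimates yields $X=\Lambda_{p,\varphi}$ with equivalent norms, and \cite{AM08} completes the proof of $(a)$.

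\textbf{The main obstacle.} I expect the delicate point to be making this two-sided comparison genuinely tight: the slots have the fixed sizes $w_1\ge w_2\ge\cdots$, which in general live on a completely different "scale ladder" than the dyadic points $2^{-n}$, so the argument must absorb both the order-$L$ shift between the two ladders and the boundary layer near $t=1$ — and that is precisely what the regular variation of $\varphi$ (the relation $\varphi(2t)\stackrel{C}{\approx}2^{1/p}\varphi(t)$) and the near-additivity \eqref{condOnFundFunc} ($\sum_i\varphi(w_i)^p\approx\varphi(1)^p$) are there to supply. A smaller but necessary technicality is the passage to infinite disjoint decompositions in $(a)$ without assuming the Fatou property, handled by the Cauchy-sequence argument noted above.
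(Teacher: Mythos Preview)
Your reduction of (b) to (a) via Theorem~\ref{Theor Tikhomirov} matches the paper. For the identification $X=\Lambda_{p,\varphi}$, your route is different and more hands-on: the paper simply observes that the normalized dyadic indicators $\bar\chi_{I_k}=\chi_{[2^{-k},2^{-k+1})}/\varphi(2^{-k})$ are equivalent to the $l_p$-basis in $X$ (using $m(I_k)=2^{-k}\le w_k$ and Corollary~\ref{cor simplifies thm 3.3}), then invokes \cite[Proposition~2]{AM08} together with \cite[Theorem~II.6.6]{KPS82} to obtain $X=(L_1,L_\infty)^K_{l_p(\varphi(2^{-k})2^{-k})}=\Lambda_{p,\varphi}$. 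You instead build a partition $E_i=I_{i+L}\cup F_i$ adapted to $(w_i)$ and compare $\sum_i\|f^*\chi_{E_i}\|_X^p$ with $\|f\|_{\Lambda_{p,\varphi}}^p$ by hand. This direct argument appears sound and trades two black-box citations for an explicit computation; the paper's approach is shorter but leans on heavier machinery.

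There is, however, a genuine gap in your final sentence. The characterization you quote from \cite{AM08} says that a \emph{reflexive Orlicz space} is ultrasymmetric iff it equals some $\Lambda_{p,\psi}$; this tells you which Orlicz spaces are Lorentz, not which Lorentz spaces are Orlicz, so once you know $X=\Lambda_{p,\varphi}$ you cannot simply cite it. The paper does real additional work here: it constructs the natural candidate $F$ (essentially $F(u)\approx 1/\varphi^{-1}(1/u)$) and verifies Kalton's quantitative condition $\Psi^\infty_{F,p}(u,C_0)\le C_1 u^{-r}$, after which \cite[Theorem~1]{AM08} yields that $L_F$ is ultrasymmetric and coincides with $\Lambda_{p,\varphi}$. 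That verification is the \emph{second} place where hypothesis~\eqref{assumption on w} is used: Corollary~\ref{cor2} gives \eqref{regVarEquivalent} with $\tau(N)=w_N\ge c_0 2^{-N}$, hence $\varphi(lt)/\varphi(t)\stackrel{C}{\approx}l^{1/p}$ for all $0<t\le 2^{-l}$, and via Proposition~\ref{equiv of functions} this translates into the polynomial bound on $\Psi^\infty_{F,p}$. Mere regular variation of $\varphi$, with an unspecified $\tau(N)$, would not suffice; indeed the paper remarks afterwards that condition~\eqref{assumption on w} is essential for the conclusion. You have all the ingredients in hand (the quantitative $\tau(N)\gtrsim 2^{-N}$), but you must actually carry out this Kalton-quantity estimate rather than defer to a direction of the \cite{AM08} characterization that is not there.
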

%%%%%%%%%%%%%%%%%%%%%%%%%%%%%%%%%

%%%%%%%%%%
\begin{proof}
(a) Firstly, taking into account the boundedness of the dilation operator and 
Theorem \ref{thm 1}, a symmetric space $X$ is $w$--decomposable
if and only if it is $v$--decomposable, where $v(u)=w(cu)$ for some
$c>0.$ Therefore, we may assume that $c_0=1.$ Denote
$I_k:=[2^{-k},2^{-k+1}),$
$\bar{\chi}_{I_k}:={\chi}_{I_k}/\varphi(2^{-k})$ $(k=1,2,\dots),$
where $\varphi$ is the fundamental function of $X.$
From \eqref{assumption on w} it follows that $m({\rm supp}\bar{\chi}_{I_k})\leq w_k$ for all $k\geq k_0$.
Applying Corollary \ref{cor simplifies thm 3.3} to scalar multiples of $\bar{\chi}_{I_k}$ $(k\geq k_0),$
we get that $(\bar{\chi}_{I_k})_{k=k_0}^\infty$ spans $l_p$ for some $p\in [1,\infty)$ ($p\neq\infty$ because 
the Boyd indices of $X$ are non-trivial). Obviously, replacing $(\bar{\chi}_{I_k})_{k=k_0}^\infty$ 
with $(\bar{\chi}_{I_k})_{k=1}^\infty$ does not change this property, so
for all $a_k\in\mathbb R$ $(k=1,2,\dots)$
$$
\Big\|\sum_{k=1}^\infty
a_k\bar{\chi}_{I_k}\Big\|_X \approx \|(a_k)\|_{l_p}.$$ Then, taking into
account \cite[Proposition~2]{AM08}, we get
$$
X=(L_1,L_\infty)_{l_p((\varphi(2^{-k})2^{-k})_{k=1}^\infty)}^{K}.
$$
By Corollary \ref{cor2}, $\delta_\varphi=\beta_X<1.$ Therefore, 
$\lim_{t \rightarrow \infty} \| \sigma_t \|_{X \rightarrow X}/t = 0$, and we can apply 
\cite[Theorem II.6.6, p.~137]{KPS82} in the case when $A$ is the identity 
operator, to obtain
\begin{eqnarray*}
\|x\|_X \approx \|(\varphi(2^{-k}) \, x^{**}(2^{-k}))_{k=1}^\infty\|_{l_p}
&\approx&
\|(\varphi(2^{-k}) \, x^{*}(2^{-k}))_{k=1}^\infty\|_{l_p}\\
&\approx&
\left( \int_{0}^{1} \left[ x^{*}(t) \varphi(t) \right ]^{p}\,
\frac{dt}{t} \right)^{1/p},
\end{eqnarray*}
and we conclude that
\begin{equation}\label{Lorentz sp.}
X=\Lambda_{p,\varphi}.
\end{equation}
Next, denote
\begin{equation*}
F(u) = \int_{0}^{u} \frac{{\tilde F}(t)}{t} \,dt, ~{\rm where}
~~{\tilde F }(t) =
\begin{cases}
\frac{t}{\varphi^{-1}(1)} & ~{\rm if} ~0 \leq t \leq 1, \\
\frac{1}{\varphi^{-1}(\frac{1}{t})} & ~{\rm if} ~t \geq 1.
\end{cases}
\end{equation*}
Since ${\tilde F}(t)/t$ is increasing on $(0,\infty),$ then $F(u)$
is a convex function and for $u>0$ we have that
$$
{\tilde F}(u/2) \leq \int_{u/2}^{u} \frac{{\tilde F}(t)}{t} \,dt
\leq F(u) \leq  {\tilde F}(u).
$$
Moreover, by Corollary \ref{cor2}, we have that $\gamma_{\varphi}=\alpha_X > 0$,
which implies that ${\tilde F}$ satisfies the
$\Delta_{2}$--condition for all $u>0.$ Therefore, for all $u > 0$
$$
F(u) \geq {\tilde F}(u/2) \geq c {\tilde F}(u),
$$
that is, the functions $F$ and ${\tilde F}$ are equivalent on $(0,
\infty)$.

Now, we recall the following definition due to Kalton \cite{Ka93} (see
also \cite{AM08}, where the notion is used): For an Orlicz function
$F$ and $1 \leq p < \infty$, define the function $\Psi_{F,
p}^{\infty}(u, C)$ for $0 < u \leq 1, C > 1$ to be the supremum
(possibly $\infty$) of all $N$ such that there exist $1 \leq a_{1} <
a_{2} < \ldots < a_{N}, ~\frac{a_{k}}{a_{k-1}} \geq 2$ for $k = 2,
\ldots, N$ such that for all $k$ either $F_{a_{k}}(u) \geq C u^{p}$
or $ u^{p} \geq C F_{a_{k}}(u)$, where $F_{a}(u): =
\frac{F(au)}{F(a)}$ for $a, u > 0$.
\vspace{1mm}

To complete the proof it suffices to verify that for some $C_{0}> 0,
C_{1}> 0$ and $r> 0$ we have that
\begin{equation*}
\Psi_{F, p}^{\infty} (u, C_{0}) \leq C_{1} u^{-r} ~~\mbox{for all}
~~u \in (0, 1].
\end{equation*}
Indeed, once it is done, we can apply Theorem 1 from \cite{AM08} to
conclude that the Orlicz space $L_F$ is ultrasymmetric and that it
coincides with a Lorentz space $\Lambda_{p,\psi}$ generated by some
increasing concave function $\psi.$ Since the fundamental function
of $L_F$ is equivalent to $\varphi$, then $L_F=\Lambda_{p,\varphi},$
and, in view of \eqref{Lorentz sp.}, the proof is complete.

Since the functions
$F$ and $\tilde F$ are equivalent, then, by \cite[Lemma 1]{AM08}, it
is sufficient to prove the inequality for $\tilde F$,
i.e., to prove that for some $C_{0}
> 0, C_{1} > 0$ and $r
> 0$ we have
\begin{equation}\label{Kalton quantity}
\Psi_{{\tilde F}, p}^{\infty} (u, C_{0}) \leq C_{1} u^{-r} ~~\mbox{
for all} ~~u \in (0, 1].
\end{equation}

In view of $w$-decomposability, Corollary \ref{cor2}, Lemma \ref{lemma2} and the inequality $w_k \geq 2^{-k}$, 
there is a constant $C>0$ such that for any $l=1,2,\dots$
\begin{equation*}
\frac{\varphi(lt)}{\varphi(t)} ~ \stackrel{C}{\approx} ~
l^{1/p}\;\;\mbox{ if }\;\; 0<t\le 2^{-l}.
\end{equation*}
Since $0<\alpha_X\le\beta_X<1$ it follows that $0<\gamma_\varphi\le\delta_\varphi<1.$
Therefore, from the definition of $\tilde{F}$ it follows that both $\tilde{F}$ and its complementary
function satisfy the $\Delta_2$-condition. Hence, by Proposition \ref{equiv of functions}
and by the definition of $\tilde{F}$ once more, we obtain that there exists a constant
$C_1>0$ such that, for any $l\in\mathbb N$ and for all $x\ge {\tilde F}^{-1}(2^l)$, we have
$$
\frac{1}{C_1l}\le\frac{\tilde{F}(xl^{-1/p})}{\tilde{F}(x)}\le
\frac{C_1}{l}.
$$
By standard arguments, there are constants $C_2>0$ and $C_3>0$ such that
\begin{equation}\label{relation for the inverse and constant C_0}
C_2^{-1} u^p\le \frac{{\tilde F}(ua)}{{\tilde F}(a)}\le C_2 u^p,
\end{equation}
for all $0<u\le 1$ and any $a$ satisfying $\tilde{F}(a)\ge
C_3 2^{u^{-p}}.$

Suppose that $1 \leq a_{1} < a_{2} < \ldots < a_{N},
~\frac{a_{k}}{a_{k-1}} \geq 2$ for $k = 2, \ldots, N$ such that for
all $k$
$$
\mbox{either}\;\; \frac{{\tilde F}(ua_k)}{{\tilde F}(a_k)}\geq 2C_2
u^{p}\;\;\mbox{or}\;\; \frac{{\tilde F}(ua_k)}{{\tilde F}(a_k)}\leq
\frac{1}{2C_2} u^{p}.
$$
Then, by \eqref{relation for the inverse and
constant C_0}, we have that ${\tilde F}(a_N)\le C_3 2^{u^{-p}},$
which implies ${\tilde F}(a_12^{N-1})\le C_3 2^{u^{-p}}.$ Hence,
$N\le C_4 u^{-p},$ that is, $\Psi_{{\tilde F}, p}^{\infty} (u,
2 C_{2}) \leq C_4u^{-p}$ $(0<u\le 1),$ and \eqref{Kalton quantity} is
proved.

(b) This part follows immediately from (a) and Theorem \ref{Theor Tikhomirov}.
\end{proof}
\vspace{1mm}

Using equality (\ref{Lorentz sp.}) from the proof of Theorem \ref{ultrasymm. Orlicz}, 
we obtain the following corollary.

%%%%%%%%%%%%%%%%%%%%%%%%%%%%%%% Corollary 5=6.2
\begin{corollary}\label{cor5} \label{criterion for L_P}
Let $X$ be a symmetric space on $[0,1]$ and $w$ be a weight on $[0,
1]$ satisfying the condition (\ref {assumption on w}). Assume that
either $X$ is $w$--decomposable or $X$ has the Fatou property and
$(X, X(w))$ is $K$-monotone couple. If $\varphi_X(t) = t^{1/p}$ for
some $1<p<\infty$, then $X=L_p$.
\end{corollary}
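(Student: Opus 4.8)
The plan is to derive the corollary from the proof of Theorem~\ref{ultrasymm. Orlicz}, specifically from the equality \eqref{Lorentz sp.} obtained there. First I would merge the two alternative hypotheses: if $X$ has the Fatou property and $(X,X(w))$ is $K$-monotone, then by Theorem~\ref{Theor Tikhomirov} the space $X$ is $w$-decomposable; thus in either case one may assume that $X$ is $w$-decomposable. Note that \eqref{assumption on w} forces infinitely many of the sets $M_k$ to be non-empty, so $w$ is non-trivial and Corollary~\ref{cor2} applies.

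Next I would verify that $X$ has non-trivial Boyd indices, which is needed to invoke Theorem~\ref{ultrasymm. Orlicz}. Since $\varphi_X(t)=t^{1/p}$ with $1<p<\infty$, this function is increasing and concave and $\gamma_{\varphi_X}=\delta_{\varphi_X}=1/p$. By Corollary~\ref{cor2}, $w$-decomposability of $X$ produces some $q\in[1,\infty]$ with $\alpha_X=\gamma_{\varphi_X}=\delta_{\varphi_X}=\beta_X=1/q$; comparing the two computations of the dilation indices forces $q=p$, hence $\alpha_X=\beta_X=1/p\in(0,1)$ and the Boyd indices of $X$ are non-trivial.

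Now I would apply Theorem~\ref{ultrasymm. Orlicz}(a): $X$ is $w$-decomposable with non-trivial Boyd indices and $w$ satisfies \eqref{assumption on w}, so its proof yields equality \eqref{Lorentz sp.}, namely $X=\Lambda_{p,\varphi_X}$ (with the same exponent $p$, by the identification $q=p$ above). It then remains to recognize $\Lambda_{p,t^{1/p}}$ as $L_p$, which is a direct computation: with $\varphi(t)=t^{1/p}$ we have $\varphi(t)^p=t$, so
\begin{equation*}
\|x\|_{\Lambda_{p,\varphi}}^p=\int_0^1\bigl[x^*(t)\varphi(t)\bigr]^p\,\frac{dt}{t}=\int_0^1 x^*(t)^p\,dt=\|x\|_{L_p}^p ,
\end{equation*}
and therefore $X=\Lambda_{p,\varphi_X}=L_p$ (up to equivalence of norms, as in \eqref{Lorentz sp.}).

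The argument is essentially the assembly of earlier results; the only point requiring a moment's attention is matching the exponent $p$ appearing in the Lorentz representation \eqref{Lorentz sp.} with the exponent coming from the fundamental function $\varphi_X$, and this is precisely what the equality of Boyd and dilation indices in Corollary~\ref{cor2} delivers.
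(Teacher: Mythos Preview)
Your proof is correct and follows the paper's approach: the paper simply states that the corollary follows from equality \eqref{Lorentz sp.} in the proof of Theorem~\ref{ultrasymm. Orlicz}, and you have spelled out the details---reducing to the $w$-decomposable case via Theorem~\ref{Theor Tikhomirov}, invoking Corollary~\ref{cor2} to get non-trivial Boyd indices and to match the exponent $p$ in \eqref{Lorentz sp.} with the one from $\varphi_X(t)=t^{1/p}$, and computing $\Lambda_{p,t^{1/p}}=L_p$.
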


%%%%%%%%%%%%%%%% Remark 4=6.3
\begin{remark} Using Krivine's theorem and the arguments from the beginning of the proof of
Theorem \ref{ultrasymm. Orlicz}, the last assertion can be proved for $p=1$ and $p=\infty$ as well.
\end{remark}

%%%%%%%%%%%%%%%% Remark 5=6.4
\begin{remark} It is well known that there is a regularly varying at $\infty$
Orlicz function $F$ such that the corresponding Orlicz space $L_F$ is not
ultrasymmetric (see \cite{Ka93}). Thus, Theorems \ref{Cor on non-tri. weights}
and \ref{regularly var and Orlicz} show that condition (\ref{assumption on w})
on the weight $w$ from Theorem \ref{ultrasymm. Orlicz} and
Corollary \ref{criterion for L_P} is essential.
\end{remark}

%%%%%%%%%%%%%%%% Remark 6=6.5
\begin{remark} Conversely, if $L_F$ is an ultrasymmetric reflexive
Orlicz space on $[0, 1]$, then there is a weight $w$ on $[0, 1]$
such that $L_F$ is $w$-decomposable and, equivalently, the Banach
couple $(L_F, L_F(w))$ is $K$-monotone. In fact, in that case $F$ is
regularly varying at $\infty$ of order $p \in (1, \infty)$ (cf.
\cite{AM08}) and we can apply Theorem \ref{regularly var and
Orlicz}.
\end{remark}

\noindent
%%%%%%%%%%%%%%%%
{\bf Examples.} Theorem \ref{regularly var and Orlicz} guarantees
that a weighted couple of Orlicz spaces $(L_F, L_F(w))$ on $[0, 1]$
is $K$-monotone for some weight $w$ on $[0, 1]$ if $F$ is equivalent
to an Orlicz function which is regularly varying at $\infty$ of
order $p\in [1,\infty)$. We present some examples of such
Orlicz functions below.

1. The function $F(u) = u^p(1 + |\ln u|)$ for $p \geq (3 + \sqrt{5})/2$ is an Orlicz function on 
$(0, \infty)$ which is regularly varying at $\infty$ of order $p$ (cf. \cite[Example 4]{Ma85}).

2. The function $F(u) = u^p[1 + c \sin (p \ln u)]$ for $0 < c < 1/\sqrt{2}$ and 
$p \geq (1 -\frac{\sqrt{2} c}{\sqrt{1 - 2 c^2}})^{-1}$ is an Orlicz function on $(0, \infty)$ which is not regularly 
varying but it is equivalent to $u^p$ and $\frac{1}{4} u^p \leq F(u) \leq 2 u^p$ for all $u > 0$ 
(cf. \cite[Example 10]{Ma85} and \cite{Ma89}, Example 5 on p. 93 with $c = 1/\sqrt{5}$ and $p \geq 6$).

3. Let an Orlicz function $F$ be equivalent for large $u$ to the function
$$
\tilde F(u) = u^p(\ln u)^{q_1}(\ln \ln u)^{q_2}\dots (\ln\dots \ln u)^{q_n},
$$
where $p\in(1,\infty)$ and $q_1,\dots, q_n$ are arbitrary real numbers.
It is easy to see that $F$ is equivalent to a regularly varying function at
$\infty$ of order $p$ (in fact, the corresponding Orlicz space $L_F$ is even
ultrasymmetric \cite{AM08}).

4. Some more examples of Orlicz functions that are equivalent to some regularly
varying functions at $\infty$ of order $p$ are given by Kalton \cite{Ka93}.

%%%%%%%%%%%%%%%%%%%%%%%%%%%%%%%%%%%%%%%%%%

\vspace{3mm}

\noindent
{\footnotesize Sergey V. Astashkin and Konstantin E. Tikhomirov\\
Department of Mathematics and Mechanics, Samara State University\\
Acad. Pavlova 1, 443011 Samara, Russia} \\
{\it E-mail addresses:} {\tt astashkn@ssu.samara.ru}, {\tt ktikhomirov@yandex.ru} \\

\vspace{-3mm}

\noindent
{\footnotesize Lech Maligranda\\
Department of Engineering Sciences and Mathematics\\
Lule\r{a} University of Technology\\
SE-971 87 Lule\r{a}, Sweden}\\
{\it E-mail address:} {\tt lech@sm.luth.se} \\

\end{document}